\numberwithin{equation}{section}
\newtheorem{theorem}{Theorem}[section]
\newtheorem{corollary}[theorem]{Corollary}
\newtheorem{lemma}[theorem]{Lemma}
\newtheorem{proposition}[theorem]{Proposition}
\newtheorem{remark}[theorem]{Remark}
\newtheorem{example}[theorem]{Example}
\newtheorem{definition}[theorem]{Definition}
\newproof{proof}{Proof}
\journal{Comm. Algebra}
\begin{document}

\begin{frontmatter}

\title{A new class of generalized inverses in semigroups and rings with involution}

\author[1]{Huihui Zhu}
\ead{hhzhu@hfut.edu.cn}

\author[1]{Liyun Wu}
\ead{wlymath@163.com}

\author[2]{Jianlong Chen}
\ead{jlchen@seu.edu.cn}

\address[1]{School of Mathematics, Hefei University of Technology, Hefei 230009, China.}
\address[2]{School of Mathematics, Southeast University, Nanjing 210096, China.}

\begin{abstract} Let $S$ be a $*$-semigroup and let $a,w,v\in S$. The initial goal of this work is to introduce two new classes of generalized inverses, called the $w$-core inverse and the dual $v$-core inverse in $S$. An element $a\in S$ is $w$-core invertible if there exists some $x\in S$ such that $awx^2=x$, $xawa=a$ and $(awx)^*=awx$. Such an $x$ is called a $w$-core inverse of $a$. It is shown that the core inverse and the pseudo core inverse can be characterized in terms of the $w$-core inverse. Several characterizations of the $w$-core inverse of $a$ are derived, and the expression is given by the inverse of $w$ along $a$ and $\{1,3\}$-inverses of $a$ in $S$. Also, the connections between the $w$-core inverse and other generalized inverses are given. In particular, when $S$ is a $*$-ring, the existence criterion for the $w$-core inverse is given by units. The dual $v$-core inverse of $a$ is defined by the existence of $y\in S$ satisfying $y^2va=y$, $avay=a$ and $(yva)^*=yva$. Dual results for the dual $v$-core inverse also hold.
\end{abstract}

\begin{keyword}
Jacobson pairs \sep core inverses \sep dual-core inverses \sep the inverse along an element  \sep ${\{1, 3}\}$-inverses \sep ${\{1, 4}\}$-inverses \sep Moore-Penrose inverses \sep semigroups with involution
\MSC[2010] 15A09 \sep 16W10 \sep 16E50

\end{keyword}

\end{frontmatter}

%% \linenumbers

%% main text

\section{Introduction}

Let $S$ be a semigroup. Following Drazin, an element $a\in S$ is Drazin invertible \cite{Drazin1958} if there exists some $x\in S$ such that
\begin{eqnarray*}
&&{\rm (i)}~ ax=xa,\\
&&{\rm (ii)}~ xax=x,\\
&&{\rm (iii)}~ a^k = a^{k+1}x ~ for~ some~ nonnegative~ integer ~k.
\end{eqnarray*}
Such an $x$ is called a Drazin inverse of $a$. It is unique if it exists, and is denoted by $a^D$. The smallest nonnegative integer $k$ in the condition (iii) is called the Drazin index of $a$, and is denoted by ${\rm ind}(a)$. The element $a$ is called group invertible if ${\rm ind}(a)=1$, and the group inverse of $a$ is denoted by $a^\#$. We denote by $S^D$ and $S^\#$ the sets of all Drazin invertible and group invertible elements in $S$, respectively. It is known that $a\in S^D$ if and only if $a^n\in a^{n+1}S \cap Sa^{n+1}$ for some positive integer $n$, and that $a\in S^\#$ if and only if $a\in a^2S \cap Sa^2$. In particular, if $a=a^2x=ya^2$ for some $x,y\in S$, then $a^\#=yax=y^2a=ax^2$.

Given a semigroup $S$, $S^1$ denotes the monoid generated by $S$. Following \cite{Green1951}, Green's preorders and relations are defined by

(i) $a\leq_\mathcal{L}b \Leftrightarrow S^1a \subset S^1b \Leftrightarrow$ there exists $x\in S^1$ such that $a=xb$.

(ii) $a\leq_\mathcal{R}b \Leftrightarrow aS^1 \subset bS^1 \Leftrightarrow$  there exists $y\in S^1$ such that $a=by$.

(iii) $a\leq_\mathcal{H}b \Leftrightarrow a\leq_\mathcal{L}b ~~{\rm and}~~ a\leq_\mathcal{R}b$.

(iv) $a\mathcal{L}b\Leftrightarrow S^1a = S^1b \Leftrightarrow$ there exist $x,y\in S^1$ such that $a=xb$ and $b=ya$.

(v) $a\mathcal{R}b\Leftrightarrow aS^1 = bS^1 \Leftrightarrow$ there exist $x,y\in S^1$ such that $a=bx$ and $b=ay$.

(vi) $a\mathcal{H}b \Leftrightarrow a\mathcal{L}b ~~{\rm and}~~ a\mathcal{R}b$.

Based on Green's preorders, Mary introduced the notion of the inverse along an element \cite{Mary2011}. Given any $a,d\in S$, the element $a$ is called invertible along $d$ if there exists some $b\in S$ such that $bad=d=dab$ and $b\leq_\mathcal{H}d$. Such an element $b$ is called the inverse of $a$ along $d$. It is unique if it exists, and is denoted by $a^{\parallel d}$. By $S^{\parallel d}$ we denote the set of all invertible elements along $d$ in $S$. The inverse along an element encompasses the classical invertibility, the group inverse and the Drazin inverse. Mary in \cite[Theorem 11]{Mary2011} illustrated that (i) $a$ is invertible if and only if $a^{\parallel 1}$ exists, (ii) $a\in S^\#$ if and only if $a^{\parallel a}$ exists, (iii) $a\in S^D$ if and only if $a^{\parallel a^n}$ exists for some positive integer $n$. In these cases, $a^{-1}=a^{\parallel 1}$, $a^\#=a^{\parallel a}$ and $a^D=a^{\parallel a^n}$. One also knows from \cite[Corollary 3.4]{Mary2013} that $a\in S^\#$ if and only if $1^{\parallel a}$ exists. Moreover, $1^{\parallel a}=aa^\#$.

In what follows, we assume that $S$ is a $*$-semigroup, that is a semigroup $S$ endowed with an involution $*$ satisfying $(x^*)^*=x$ and $(xy)^*=y^*x^*$ for all $x,y\in S$. 

Recall that an element $a\in S$ is said to have a  Moore-Penrose inverse \cite{Penrose1955} if there exists some $x\in S$ such that
\begin{center}
(1) $axa=a$, (2) $xax=x$, (3) $(ax)^*=ax$, (4) $(xa)^*=xa$.
\end{center}
Such an $x$ is called a Moore-Penrose inverse of $a$. It is unique if it exists, and is denoted by $a^\dag$. By $S^\dag$ we denote the set of all Moore-Penrose invertible elements in $S$. If $a,x\in S$ satisfy the equations $\{i_1,\ldots,i_k\}\subseteq \{1,2,3,4\}$, then $x$ is called a $\{i_1,\ldots,i_k\}$-inverse of $a$, and is denoted by $a^{(i_1,\ldots,i_k)}$. As usual, we denote by $S^{(1,3)}$ and $S^{(1,4)}$ the sets of all $\{1,3\}$-invertible and $\{1,4\}$-invertible elements in $S$, respectively. It is known that $a\in S^{(1,3)}$ if and only if $a\in Sa^*a$, and $a\in S^{(1,4)}$ if and only if $a\in aa^*S$. In particular, if $a=xa^*a$ for some $x\in S$, then $x^*$ is a $\{1,3\}$-inverse of $a$. If $a=aa^*y$ for some $y\in S$, then $y^*$ is a $\{1,4\}$-inverse of $a$. By $a\{1,3\}$ and $a\{1,4\}$ we denote the sets of all $\{1,3\}$-inverses and $\{1,4\}$-inverses of $a$, respectively. Mary \cite[Theorem 11]{Mary2011} also proved in a $*$-semigroup $S$ that $a\in S^\dag$ if and only if $a^{\parallel a^*}$ exists. Moreover, $a^\dagger=a^{\parallel a^*}$.

The core inverse and the dual-core inverse of complex matrices were firstly introduced by Baksalary and Trenkler in their paper \cite{Baksalary2010}. Suppose $A \in M_n(\mathbb{C})$, the ring of all $n$ by $n$ complex matrices. A matrix $X \in M_n(\mathbb{C})$ is called a
core inverse of $A$ if it satisfies $AX = P_A$ and $\mathcal{R}(X) \subseteq \mathcal{R}(A)$, where $\mathcal{R}(A)$
denotes the column space of $A$, and $P_A$ is the orthogonal projector onto $\mathcal{R}(A)$. Such a matrix $X$ is unique if it exists, and is denoted by $A^{\tiny{\textcircled{\#}}}$.  The dual-core inverse, when it exists, is the unique $A_{\tiny{\textcircled{\#}}}$ satisfying $A_{\tiny{\textcircled{\#}}}A = P_{A^*}$ and $\mathcal{R}(A_{\tiny{\textcircled{\#}}}) \subseteq \mathcal{R}(A^*)$.

Suppose that $R$ is a $*$-ring, that is an associative ring with an involution $*$ satisfying $(x^*)^*=x$, $(xy)^*=y^*x^*$ and $(x+y)^*=x^*+y^*$ for all $x,y\in R$. In 2014, Raki\'{c} et al. \cite{Rakic2014} extended the core inverse and the dual-core inverse of a complex matrix to the case of a $*$-ring $R$. It was proved that the core inverse of $a\in R$ is the solution of the following five equations
\begin{center}
 (1)~$axa=a$, (2)~$xax=x$, (3)~$ax^2=x$, (4)~$xa^2=a$, (5)~$(ax)^*=ax$.
\end{center}

Also, they \cite{Rakic2014} showed that the dual-core inverse of $a\in R$ is the solution of the following five equations
\begin{center}
$(1')$~$axa=a$, $(2')$~$xax=x$, $(3')$~$x^2a=x$, $(4')$~$a^2x=a$, $(5')$~$(xa)^*=xa$.
\end{center}

As usual, by $R^{\tiny{\textcircled{\#}}}$ and $R_{\tiny{\textcircled{\#}}}$ we denote the sets of all core invertible and dual-core invertible elements in $R$, respectively.

In 2017, Xu et al. \cite{Xu2017} found that the equations (1) and (2) above can be dropped, more precisely, they characterized the core inverse of $a\in R$ by the solution of the following three equations  \begin{center}
(3)~$ax^2=x$, (4)~$xa^2=a$, (5)~$(ax)^*=ax$.
\end{center}
The dual-core inverse can also be expressed by the solution of
 \begin{center}
 $(3')$~$x^2a=x$, $(4')$~$a^2x=a$, $(5')$~$(xa)^*=xa$.
\end{center}
In addition, they derived that (i) $a\in R^{\tiny{\textcircled{\#}}}$ if and only if $a\in R^\# \cap R^{(1,3)}$, and (ii) $a\in R_{\tiny{\textcircled{\#}}}$ if and only if $a\in R^\# \cap R^{(1,4)}$. Moreover, $a^{\tiny{\textcircled{\#}}}=a^\#aa^{(1,3)}$ and $a_{\tiny{\textcircled{\#}}}=a^{(1,4)}aa^\#$. Through the aspect of Mary's inverse along an element, the existence criteria of the core inverse and the dual-core inverse can be correspondingly stated: (i) $a\in R^{\tiny{\textcircled{\#}}}$ if and only if $1\in R^{\parallel a}$ and $a\in R^{(1,3)}$, and (ii) $a\in R_{\tiny{\textcircled{\#}}}$ if and only if $1\in R^{\parallel a}$ and $a\in R^{(1,4)}$. Moreover, $a^{\tiny{\textcircled{\#}}}=1^{\parallel a}a^{(1,3)}$ and $a_{\tiny{\textcircled{\#}}}=a^{(1,4)}1^{\parallel a}$. Another relation between the core inverse and the inverse along an element was discovered by Raki\'{c} et al. \cite[Theorem 4.3]{Rakic2014} who proved that $a$ is core invertible if and only if $a$ is invertible along $aa^*$ provided that $a\in R^\dag$, and that the two inverses coincide in this case.

In \cite{Gao2018}, Gao and Chen defined the pseudo core inverse (a.k.a. the core-EP inverse \cite{Manjunatha2013}) by three equations in $*$-rings. An element $a\in R$ is pseudo core invertible if there exists an $x\in R$ such that $xa^{m+1}=a^m$, $ax^2=x$ and $(ax)^*=ax$ for some positive integer $m$. Such an $x$ is called a pseudo core inverse of $a$. It is unique if it exists, and is denoted by $a^{\tiny{\textcircled{D}}}$.  The smallest positive integer $m$ is called the pseudo core index of $a$, and is denoted by ${\rm I}(a)$. We will use the symbol $R^{\tiny{\textcircled{D}}}$ to denote the set of all pseudo core invertible elements in $R$. One knows from \cite[Theorem 2.3]{Gao2018} that $a
\in R^{\tiny{\textcircled{D}}}$ with ${\rm I}(a)=m$ if and only if $a\in R^D$ with ${\rm ind}(a)=m$ and $a^k\in R^{(1,3)}$, for any integer $k\geq m$, and the relation $a^{\tiny{\textcircled{D}}}=a^Da^k(a^k)^{(1,3)}$ is also proved. More results on pseudo core inverses can be referred to \cite{Zhu2018}.

The paper is organized as follows. In Section 2, the $w$-core inverse and the dual $v$-core inverse are defined in a $*$-semigroup $S$. Then, several existence criteria are given. In particular, we show in Theorem \ref{relate to mary inverse} that $a$ is $w$-core invertible if and only if $w$ is invertible along $a$ and $a$ is $\{1,3\}$-invertible. Moreover, $w^{\parallel a}a^{(1,3)}$ is the $w$-core inverse of $a$. A dual result for the dual $v$-core inverse is also given in Theorem \ref{relate to dual mary inverse}. It is also shown that $a$ is $w$-core invertible if and only if $a^*$ is dual $w^*$-core invertible. In particular, we show that $a$ is pesudo core invertible if and only if $a^n$ is core invertible if and only if $a^n$ is $a$-core invertible for any integer $n\geq 1$. Also, $a$ is $a^*$-core invertible if and only if it is Moore-Penrose invertible if and only if it is dual $a^*$-core invertible. Finally, we show that the $w$-core inverse and the dual $v$-core inverse are both instances of Mary's inverses along an element and Drazin's $(b,c)$-inverses. In Section 3, all of our results are given by the language of ring theory. It is shown that $a$ is $w$-core invertible if and only if there exists a (unique) projection $p\in R$ such that $pa=0$ and $u=p+aw\in R^{-1}$. In Theorem \ref{vw intersect}, we characterize both $w$-core invertible and dual $v$-core invertible elements by units, under the assumption $v\in R^{\parallel a}$. Also, a counterexample is given to show that the assumption can not be removed in a general $*$-ring $R$. Specially, when $R$ is a Dedekind-finite ring, the hypothesis could be dropped (see Theorem \ref{vw intersect dedekind}). In Section 4, some applications of the $w$-core inverse are given in complex matrices.

\section{The $w$-core inverse in a $*$-semigroup}

In this section, we assume that $S$ is a $*$-monoid (a $*$-semigroup with unity 1). The goal in this section is to give several characterizations for the $w$-core inverse in $S$. For instance, Theorem \ref{relate to mary inverse} establishes the equivalence that $a$ is $w$-core invertible if and only if $w\in S^{\parallel a}$ and $a\in S^{(1,3)}$. Theorem \ref{relate to dual mary inverse} presents the equivalence that $a$ is dual $v$-core invertible if and only if $v\in S^{\parallel a}$ and $a\in S^{(1,4)}$.

\begin{definition}\label{ew}
Let $a,w\in S$. An element $a$ is called $w$-core invertible if there exists some $x\in S$ such that $awx^2=x$, $xawa=a$ and $awx=(awx)^*$. Such an $x$ is called a $w$-core inverse of $a$.
\end{definition}

\begin{lemma} \label{added lemma} For any $a,w\in S$, if $x\in S$ is a $w$-core inverse of $a$, then $awxa=a$ and $xawx=x$. Moreover, $wx$ is a $\{1,2,3\}$-inverse of $a$.
\end{lemma}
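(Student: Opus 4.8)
The plan is to set $e := awx$ and to read the three defining equations as structural information about $e$: the third equation $awx=(awx)^*$ says exactly that $e$ is self-adjoint, while the first and second equations pin down how $e$ interacts with $x$ and $a$. Once I extract two auxiliary identities, both claimed equalities and all three $\{1,2,3\}$-inverse conditions for $wx$ should follow quickly.

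First I would record the immediate consequence $ex = awx\cdot x = awx^2 = x$ of the first defining equation, so that $e$ is a left identity on $x$. To obtain $awxa=a$, i.e.\ $ea=a$, I would substitute the second defining equation $a=xawa$ and use $ex=x$:
\[
ea = e(xawa) = (ex)awa = x\,awa = xawa = a .
\]
This gives the first claimed identity with essentially no effort.

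The real work is the identity $xawx=x$. The idea is to expand the \emph{trailing} copy of $x$ via $x=awx^2$ and to exploit the relation obtained by right-multiplying $a=xawa$ by $wx$, namely $xawawx = (xawa)wx = a\,wx = awx$. Concretely,
\[
xawx = xaw(awx^2) = (xawawx)\,x = (awx)\,x = awx^2 = x ,
\]
where the middle equality uses $xawawx=awx$ and the last two use $ex=x$ together with the first defining equation. I expect this telescoping step --- choosing to expand the correct occurrence of $x$ and noticing that $xawa\cdot wx$ collapses back to $awx$ --- to be the only genuinely non-obvious point in the argument; everything else is bookkeeping.

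With $awxa=a$ and $xawx=x$ established, verifying that $y:=wx$ is a $\{1,2,3\}$-inverse of $a$ is routine: condition $(1)$ reads $a(wx)a = awxa = a$; condition $(2)$ reads $(wx)a(wx) = w(xawx) = wx$; and condition $(3)$ reads $(a\cdot wx)^* = (awx)^* = awx = a\cdot wx$, which is precisely the self-adjointness of $e$. As a harmless byproduct, right-multiplying $ea=a$ by $wx$ yields $e^2=eawx=awx=e$, so $e=awx$ is in fact a self-adjoint idempotent; this is not needed for the statement but clarifies the geometry behind the computation.
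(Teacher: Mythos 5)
Your proof is correct and follows essentially the same route as the paper's: both arguments obtain $awxa=a$ by substituting $x=awx^2$ (equivalently, using $ex=x$ for $e=awx$) into $a=xawa$, and obtain $xawx=x$ by the same telescoping substitution of $a=xawa$ and $x=awx^2$. The packaging via the idempotent $e=awx$ is a cosmetic difference only.
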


\begin{proof} As $x$ is a $w$-core inverse of $a$, then $awx^2=x$, $xawa=a$ and $(awx)^*=awx$. Hence, $a=xawa=(awx^2)awa=(awx)xawa=awxa$, and $x=awx^2=(xawa)wx^2=xaw(awx^2)=xawx$. So, $wx$ is a $\{1,2,3\}$-inverse of $a$.
\hfill$\Box$
\end{proof}

Given any $a,w\in S$, we prove in Theorem \ref{ideal form} below that $a$ is $w$-core invertible if and only if $a\in awS$ and $aw$ is core invertible. Moreover, $(aw)^{\tiny\textcircled{\tiny{\#}}}$ is the $w$-core inverse of $a$. Several results afterwards will come for free. Most fundamentally, we have

\begin{theorem}\label{uniqueness} Let $a,w\in S$. Then $a$ has at most one $w$-core inverse in $S$.
\end{theorem}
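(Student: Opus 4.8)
The plan is to assume that $x$ and $y$ are both $w$-core inverses of $a$ and to prove $x=y$ in two stages: first I would show that the Hermitian idempotents $awx$ and $awy$ coincide, and then I would use this common idempotent to force $x=y$.

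For the first stage, write $e=awx$ and $f=awy$. Using Lemma \ref{added lemma} (specifically $xawx=x$ and $yawy=y$) together with the Hermitian hypotheses $(awx)^*=awx$ and $(awy)^*=awy$, both $e$ and $f$ are self-adjoint idempotents; indeed $e^2=aw(xawx)=awx=e$, and similarly $f^2=f$. Lemma \ref{added lemma} also supplies $awxa=a$ and $awya=a$, that is $ea=a$ and $fa=a$. Since $awx=a\cdot wx$, this gives $fe=f(awx)=(fa)(wx)=a(wx)=awx=e$, and symmetrically $ef=f$. Taking adjoints in $e=fe$ and using that $e,f$ are Hermitian yields $e=e^*=(fe)^*=e^*f^*=ef=f$. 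Hence $awx=awy$; call this common projection $e$.

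For the second stage, set $z=x-y$. From $xawx=x$ and $yawy=y$ I obtain $xe=x$ and $ye=y$, so $ze=z$. Expanding $e=awx$ gives $z=z(awx)=(zaw)x$, and substituting the defining identity $awx^2=x$ produces $z=(zaw)(awx^2)=(zawaw)x^2$. It then remains to observe that $zawaw=0$: the defining relation $xawa=a$ gives $x(awaw)=(xawa)w=aw$, and likewise $y(awaw)=aw$, so $zawaw=(x-y)awaw=aw-aw=0$. Therefore $z=0$, that is $x=y$.

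The only delicate point is the order of operations in the second stage: one must route the argument through the single projection $e$ obtained in the first stage (so that $x=xe$ and $y=ye$ genuinely share the same $e$) before converting $awx^2=x$ and $xawa=a$ into the cancellation $(zawaw)x^2=0$. I expect the first stage — matching $awx$ with $awy$ through the adjoint identity $e=ef=f$ — to be the main obstacle, since that is exactly where the involution enters and where the $\{1,3\}$-type symmetry is essential; the remaining computation is short and purely multiplicative.
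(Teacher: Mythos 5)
Your first stage is fine: the computation $e^2=e$, $f^2=f$, $fe=e$, $ef=f$, and then $e=(fe)^*=ef=f$ is purely multiplicative, uses only Lemma \ref{added lemma} and the Hermitian conditions, and correctly yields $awx=awy$. The genuine gap is in your second stage. Theorem \ref{uniqueness} is stated for a $*$-semigroup (Section 2 assumes only that $S$ is a $*$-monoid), so there is no addition, no subtraction and no zero element: the element $z=x-y$, the identity $zawaw=aw-aw=0$, and the conclusion $z=0$ are all meaningless in this setting. Everything from ``set $z=x-y$'' onward relies on ring structure that is not available, so as written the proof only establishes uniqueness when $S$ happens to be a $*$-ring.

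The repair is short and purely multiplicative, so the overall strategy survives: once $awx=awy$ is known, Lemma \ref{added lemma} gives $x=xawx=x(awy)$, and substituting $y=awy^2$ yields
\[
x=x(awy)=x\bigl(aw(awy^2)\bigr)=(xawa)(wy^2)=a\,wy^2=awy^2=y,
\]
using $xawa=a$ in the penultimate step. Note also that the paper itself reaches uniqueness by a different route: the discussion preceding the theorem, made precise in Theorem \ref{ideal form}, shows that any $w$-core inverse $x$ of $a$ satisfies $x(aw)^2=aw$, $awx^2=x$ and $(awx)^*=awx$, hence is the core inverse of $aw$, whose uniqueness is already known. Your two-idempotent argument, once its second half is rewritten without subtraction, is a reasonable self-contained alternative to that reduction.
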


In view of Theorem \ref{uniqueness}, it is known that the $w$-core inverse is unique if it exists. The $w$-core inverse of $a$ is denoted by $a_w^{\tiny\textcircled{\tiny{\#}}}$. We denote by $S_w^{\tiny\textcircled{\tiny{\#}}}$ the set of all $w$-core invertible elements in $S$.

It is not difficult to observe that the $1$-core inverse is just the classical core inverse. So, core invertible elements are $w$-core invertible. However, $w$-core invertible elements may not be core invertible as the following example shows.

\begin{example} \label{Ex1} {\rm Let $S$ be the semigroup of all $2 \times 2$ complex matrices and let the involution $*$ be the conjugate transpose. Suppose $a=\begin{bmatrix}
0 & 1 \\
0 & 0 \\
\end{bmatrix}
$, $w=\begin{bmatrix}
           3 & 6 \\
           1 & 0 \\
         \end{bmatrix}
\in S$. Then $a$ is $w$-core invertible and
$a_w^{\tiny\textcircled{\tiny{\#}}}=
\begin{bmatrix}
1 & 0 \\
0 & 0 \\
\end{bmatrix}
$. Clearly, $a \notin S^\#$, and hence $a\notin S^{\tiny\textcircled{\tiny{\#}}}$.}
\end{example}

The following theorem, a main result of this paper, presents the representation of the $w$-core inverse of $a$ by the inverse of $w$ along $a$ and its $\{1,3\}$-inverses, where $a,w\in S$. First, an auxiliary lemma about the existence criterion of the inverse along an element is given.

\begin{lemma} \label{Mary left and right} {\rm \cite[Theorem 2.2]{Mary2013}} Let $a,d\in S$. Then $a\in S^{\parallel d}$ if and only if $d\leq_\mathcal{H} dad$. In this case, $a^{\parallel d}=dx=yd$, where $x,y\in S$ satisfy $d=dadx=ydad$.
\end{lemma}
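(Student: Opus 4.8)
The plan is to prove the two implications of the equivalence while simultaneously tracking the two one-sided witnesses that produce the displayed formula $a^{\parallel d}=dx=yd$. Throughout I write $b=a^{\parallel d}$ for the (unique) inverse of $a$ along $d$ when it exists, and I recall that $b\leq_{\mathcal{H}}d$ means $b=dp=qd$ for some $p,q\in S^1$, while $d\leq_{\mathcal{H}}dad$ means $d\in dad\,S^1$ and $d\in S^1 dad$.

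First I would treat the forward implication. Assume $b\in S$ satisfies $bad=d=dab$ and $b\leq_{\mathcal{H}}d$, so $b=dp$ and $b=qd$ with $p,q\in S^1$. Substituting $b=dp$ into $dab=d$ gives $d=da(dp)=(dad)p$, which exhibits $d\leq_{\mathcal{R}}dad$; dually, $b=qd$ in $bad=d$ gives $d=q(dad)$, i.e.\ $d\leq_{\mathcal{L}}dad$, hence $d\leq_{\mathcal{H}}dad$. To upgrade to witnesses in $S$ and obtain the formula, I would first record the identity $bab=b$, which follows from $bab=ba(dp)=(bad)p=dp=b$. Setting $x:=pab$ and $y:=baq$ — both manifestly in $S$ — one computes $dx=(dp)(ab)=b\cdot ab=bab=b$ and $yd=(ba)(qd)=ba\cdot b=bab=b$, and then $dadx=da(dx)=dab=d$ and $ydad=(yd)ad=bad=d$. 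Thus $a^{\parallel d}=b=dx=yd$ with $d=dadx=ydad$.

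For the converse I would assume $d\leq_{\mathcal{H}}dad$, i.e.\ $d=dadx=ydad$ for suitable $x,y\in S$. The crucial step is the crossing identity $yd=y(dadx)=(ydad)x=dx$, which shows $dx=yd$; call this common value $b$. From $b=dx$ we get $bS^1\subseteq dS^1$ and from $b=yd$ we get $S^1 b\subseteq S^1 d$, so $b\leq_{\mathcal{H}}d$; moreover $dab=da(dx)=(dad)x=d$ and $bad=(yd)ad=y(dad)=d$. Hence $b$ is an inverse of $a$ along $d$, so $a\in S^{\parallel d}$ with $a^{\parallel d}=dx=yd$, and uniqueness of the inverse along $d$ (noted in the introduction) makes these expressions unambiguous.

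The only genuinely delicate point I anticipate is the bookkeeping between $S$ and $S^1$: Green's preorders are defined via $S^1$, so the raw witnesses naturally live in $S^1$, whereas the lemma asserts $x,y\in S$. In the forward direction this is handled exactly by the renormalization $x=pab$, $y=baq$ together with $bab=b$, which absorbs any appearance of the identity; in the converse a raw $S^1$-witness $x_0$ with $d=dadx_0$ can be replaced by $x:=x_0ad\,x_0\in S$, since $dadx=(dadx_0)(adx_0)=d\cdot adx_0=d$, and symmetrically for $y$. Beyond this edge-case management, every step is a short associative manipulation, so I expect no substantive obstacle.
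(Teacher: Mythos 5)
Your proof is correct. Note that the paper offers no proof of this lemma---it is quoted directly from \cite[Theorem 2.2]{Mary2013}---so there is no in-paper argument to compare against; your derivation, and in particular the crossing identity $yd=y(dadx)=(ydad)x=dx$ in the converse, is essentially the standard proof from that reference, and your handling of the $S^1$-versus-$S$ witnesses (via $x:=pab$, $y:=baq$ and $bab=b$ in one direction, and $x:=x_0adx_0$ in the other) is a correct refinement, though it is moot in this paper since Section 2 assumes $S$ is a monoid.
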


\begin{theorem}\label{relate to mary inverse} Let $a,w\in S$. Then $a\in S_{w}^{\tiny\textcircled{\tiny{\#}}}$ if and only if $w^{\parallel a}$ and $a^{(1,3)}$ both exist. In this case, $a_{w}^{\tiny\textcircled{\tiny{\#}}}=w^{\parallel a}a^{(1,3)}$ and $w^{\parallel a}=a_w^{\tiny\textcircled{\tiny{\#}}}a$.
\end{theorem}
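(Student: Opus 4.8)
The plan is to prove both implications by directly checking the defining relations, invoking Lemma \ref{added lemma} for necessity and the defining properties of the inverse along an element (together with Lemma \ref{Mary left and right}) for sufficiency; the uniqueness in Theorem \ref{uniqueness} then turns ``a $w$-core inverse'' into ``the $w$-core inverse'' and forces the displayed formula to be independent of the chosen $\{1,3\}$-inverse.

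For necessity, write $x=a_w^{\tiny\textcircled{\tiny{\#}}}$. Lemma \ref{added lemma} already gives $awxa=a$, $xawx=x$ and that $wx$ is a $\{1,3\}$-inverse of $a$, so $a^{(1,3)}$ exists. To produce $w^{\parallel a}$ I will guess the candidate $xa$ and verify that $w$ is invertible along $a$ with inverse $xa$: the sandwich equations read $(xa)wa=x(awa)=xawa=a$ and $aw(xa)=(awx)a=awxa=a$, while $xa\leq_{\mathcal{L}}a$ is immediate from $xa=x\cdot a$ and $xa\leq_{\mathcal{R}}a$ comes from rewriting the defining equation $x=awx^2$ as $xa=a(wx^2a)\in aS^1$. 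Hence $w^{\parallel a}=xa=a_w^{\tiny\textcircled{\tiny{\#}}}a$, and $w^{\parallel a}a^{(1,3)}=(xa)(wx)=xawx=x$.

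For sufficiency, set $b=w^{\parallel a}$, let $c$ be any $\{1,3\}$-inverse of $a$, and put $x=bc$. From the definition of the inverse along $a$ I have $awb=a=bwa$, and from $b\leq_{\mathcal{H}}a$ (or Lemma \ref{Mary left and right}) I may write $b=as=ta$ for suitable $s,t\in S^1$. The key observation is the pair of absorption identities forced by $aca=a$, namely $acb=(ac)(as)=(aca)s=as=b$ and $bca=(ta)(ca)=t(aca)=ta=b$. Granting these, the three axioms of Definition \ref{ew} follow at once: $awx=(awb)c=ac$ is Hermitian since $c$ is a $\{1,3\}$-inverse; $awx^2=(awb)c\,bc=ac\,bc=(acb)c=bc=x$; and $xawa=(bca)(wa)=bwa=a$. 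Thus $bc$ is a $w$-core inverse of $a$, and uniqueness yields $a_w^{\tiny\textcircled{\tiny{\#}}}=w^{\parallel a}a^{(1,3)}$ for every choice of $\{1,3\}$-inverse, together with $a_w^{\tiny\textcircled{\tiny{\#}}}a=bca=b=w^{\parallel a}$.

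I expect the only genuinely non-routine steps to be choosing the correct candidate $w^{\parallel a}=xa$ and establishing the right-sided relation $xa\leq_{\mathcal{R}}a$ in the necessity part (which rests on reading $x=awx^2$ the right way), and spotting the two idempotent-absorption identities $acb=b$ and $bca=b$ in the sufficiency part; once these are in hand, the remaining verifications are pure bookkeeping with the defining equations.
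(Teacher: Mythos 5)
Your proof is correct, and the sufficiency direction is essentially the paper's own argument: both hinge on the absorption identities $a\,a^{(1,3)}\,w^{\parallel a}=w^{\parallel a}$ and $w^{\parallel a}\,a^{(1,3)}\,a=w^{\parallel a}$ (your $acb=b$ and $bca=b$), obtained from $w^{\parallel a}\in aS\cap Sa$, after which the three axioms are routine. The only real divergence is in necessity: the paper shows $a\in awaS\cap Sawa$ and invokes the existence criterion of Lemma \ref{Mary left and right}, whereas you exhibit the candidate $xa$ and verify the definition of the inverse along $a$ directly, checking $(xa)wa=a=aw(xa)$ and $xa\leq_{\mathcal{H}}a$ via $xa=a(wx^2a)$. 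Your route costs nothing extra and has the small advantage of producing the closed form $w^{\parallel a}=a_w^{\tiny\textcircled{\tiny{\#}}}a$ on the spot (the paper states this formula but leaves its derivation implicit in $w^{\parallel a}\in Sa$), and of handling the independence of the expression $w^{\parallel a}a^{(1,3)}$ from the chosen $\{1,3\}$-inverse inside the proof via uniqueness, which the paper relegates to the remark that follows.
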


\begin{proof} First suppose $x\in S$ is the $w$-core inverse of $a$. Then $a=xawa\in Sawa$ and $a=awxa=aw(awx^2)a\in awaS$, which give $a\in awaS \cap Sawa$, and hence $w\in S^{\parallel a}$ by Lemma \ref{Mary left and right}.

Again, by Lemma \ref{added lemma}, we have $(awx)^*=awx$ and $awxa=a$, and hence $a\in S^{(1,3)}$.

Conversely, if $w^{\parallel a}$ and $a^{(1,3)}$ exist, then $x=w^{\parallel a}a^{(1,3)}$ is the $w$-core inverse of $a$. Indeed, we have

(1) Since $w^{\parallel a}\in aS$, there exists some $y\in S$ such that $w^{\parallel a}=ay$ and hence $aa^{(1,3)}w^{\parallel a}=aa^{(1,3)}ay=ay=w^{\parallel a}$. So, $awx^2=(aww^{\parallel a})a^{(1,3)}w^{\parallel a}a^{(1,3)}=aa^{(1,3)}w^{\parallel a}a^{(1,3)}=w^{\parallel a}a^{(1,3)}=x$.

(2) Note that $w^{\parallel a}\in Sa$. Then $w^{\parallel a}a^{(1,3)}a=w^{\parallel a}$ and $xawa=w^{\parallel a}a^{(1,3)}awa=w^{\parallel a}wa=a$.

(3) $(awx)^*=awx$ since $awx=aww^{\parallel a}a^{(1,3)}=aa^{(1,3)}$.
\hfill$\Box$
\end{proof}

\begin{remark}{\rm In Theorem  \ref{relate to mary inverse} above, the $w$-core inverse of $a\in S$ is expressed by the product of $w^{\parallel a}$ and $a^{(1,3)}$. It is well known that an element could have different $\{1,3\}$-inverses. However, the product of $w^{\parallel a}$ and $a^{(1,3)}$, i.e., $w^{\parallel a}a^{(1,3)}$ is unique, that is for $x,y\in a\{1,3\}$, we have $w^{\parallel a}x=w^{\parallel a}y$. Indeed, the equality $ax=ayax=(ay)^*(ax)^*=(axay)^*=(ay)^*=ay$ implies $w^{\parallel a}x=w^{\parallel a}y$ since $w^{\parallel a}\in Sa$.}
\end{remark}

Applying Lemma \ref{group result} below, we can obtain another representation of the $w$-core inverse.

\begin{lemma} \label{group result} {\rm \cite[Theorem 7]{Mary2011}} Let $a,w\in S$. Then the following conditions are equivalent{\rm:}

\emph{(i)} $w\in S^{\parallel a}$.

\emph{(ii)} $aw \mathcal{R} a$ and $aw\in S^\#$.

\emph{(iii)} $wa \mathcal{L} a$ and $wa\in S^\#$.

In this case, $w^{\parallel a}=a(wa)^\#=(aw)^\# a$.
\end{lemma}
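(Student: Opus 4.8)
The plan is to prove (i) $\Leftrightarrow$ (ii) in full and then deduce (i) $\Leftrightarrow$ (iii) by a left-right duality argument carried out in the opposite monoid $S^{\mathrm{op}}$. The two representation formulas $w^{\parallel a}=(aw)^\#a$ and $w^{\parallel a}=a(wa)^\#$ will fall out of the two directions of these equivalences, so no separate computation for the formulas is needed.

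For (i) $\Rightarrow$ (ii), I would argue directly from the definition of the inverse along an element. If $b=w^{\parallel a}$, then $bwa=a=awb$ with $b\leq_\mathcal{H}a$, so $b=qa=ap$ for some $p,q\in S$. Substituting yields $a=awb=awap$ and $a=bwa=qawa$; in particular $a\in awS$, which together with the trivial relation $aw\leq_\mathcal{R}a$ gives $aw\,\mathcal{R}\,a$. To get $aw\in S^\#$ I would invoke the group-invertibility criterion recalled in the introduction, $aw\in S^\#\Leftrightarrow aw\in(aw)^2S\cap S(aw)^2$. Feeding $a=awap$ into itself gives $a=aw(awap)p=(aw)^2ap^2$, whence $aw=(aw)^2(ap^2w)\in(aw)^2S$; dually, $a=qawa$ gives $aw=q(aw)^2\in S(aw)^2$. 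Hence $aw$ is group invertible.

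For (ii) $\Rightarrow$ (i), I would propose the explicit candidate $b=(aw)^\#a$ and verify it is the inverse of $w$ along $a$. Writing $e=aw(aw)^\#=(aw)^\#aw$ for the associated idempotent, the hypothesis $aw\,\mathcal{R}\,a$ supplies $s$ with $a=aws$, so that $ea=aw(aw)^\#aws=(aw)s=a$. Then $awb=aw(aw)^\#a=ea=a$ and $bwa=(aw)^\#(aw)a=ea=a$. Finally $b=(aw)^\#a\in Sa$ gives $b\leq_\mathcal{L}a$, while the identity $(aw)^\#=(aw)\bigl((aw)^\#\bigr)^2\in aS$ forces $b=(aw)^\#a\in aS$, i.e. $b\leq_\mathcal{R}a$; thus $b\leq_\mathcal{H}a$ and $w\in S^{\parallel a}$ with $w^{\parallel a}=(aw)^\#a$.

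With (i) $\Leftrightarrow$ (ii) established, I would pass to the opposite monoid $S^{\mathrm{op}}$ (same underlying set, product $x\cdot y:=yx$). The defining equations $bwa=a=awb$ and the relation $\leq_\mathcal{H}$ are self-dual, so statement (i) for $S$ and for $S^{\mathrm{op}}$ coincide, with the very same inverse $b$; meanwhile passing to $S^{\mathrm{op}}$ interchanges $\mathcal{R}$ and $\mathcal{L}$ and replaces the product $aw$ by $wa$, so statement (ii) read in $S^{\mathrm{op}}$ is exactly (iii). Applying the proven equivalence in $S^{\mathrm{op}}$ therefore yields (i) $\Leftrightarrow$ (iii), and the formula $w^{\parallel a}=(aw)^\#a$ interpreted in $S^{\mathrm{op}}$ becomes $w^{\parallel a}=a(wa)^\#$; uniqueness of the inverse along $a$ makes the two expressions agree. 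The step I expect to be the main obstacle is the group-invertibility claim in (i) $\Rightarrow$ (ii) — spotting the right self-substitution that lands $aw$ simultaneously in $(aw)^2S$ and $S(aw)^2$ — together with the bookkeeping in (ii) $\Rightarrow$ (i) that places $(aw)^\#a$ in both $aS$ and $Sa$ and shows the idempotent $e$ fixes $a$ on the left.
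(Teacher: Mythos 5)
Your argument is correct. Note, though, that the paper offers no proof of this statement at all: it is quoted verbatim as \cite[Theorem 7]{Mary2011}, so there is no in-paper argument to compare against. What you have written is a sound self-contained proof. The two directions check out: in (i) $\Rightarrow$ (ii) the self-substitution $a=awap \Rightarrow a=(aw)^2ap^2$ together with $a=qawa \Rightarrow aw=q(aw)^2$ correctly lands $aw$ in $(aw)^2S\cap S(aw)^2$, which is exactly the group-invertibility criterion recalled in the introduction; in (ii) $\Rightarrow$ (i) the verification that $b=(aw)^\# a$ satisfies $awb=bwa=a$ and lies in $aS\cap Sa$ (using $(aw)^\#=(aw)((aw)^\#)^2$) is complete, and the passage to $S^{\mathrm{op}}$ legitimately converts (ii) into (iii) and $(aw)^\#a$ into $a(wa)^\#$, with uniqueness of $w^{\parallel a}$ reconciling the two formulas. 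For comparison, Mary's original proof of this theorem goes through the structure theory of Green's relations (trace products and the Clifford--Miller location theorem characterizing when an $\mathcal{H}$-class contains an idempotent), whereas yours is a purely equational verification; your route is more elementary and portable, at the cost of not exposing the $\mathcal{H}$-class picture that motivates the result. One tiny point of hygiene: the factors $p,q$ coming from $b\leq_\mathcal{H}a$ live in $S^1$ rather than $S$, which is harmless here since Section 2 works in a $*$-monoid.
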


\begin{corollary} \label{extended repre} Let $a,w\in S$. Then $a\in S_{w}^{\tiny\textcircled{\tiny{\#}}}$ if and only if $w^{\parallel a}$ and $a^{(1,3)}$ both exist. In this case, we have $a_{w}^{\tiny\textcircled{\tiny{\#}}}=a(wa)^\#a^{(1,3)}=(aw)^\#aa^{(1,3)}$.
\end{corollary}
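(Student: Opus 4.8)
The plan is to observe that this corollary is almost entirely a restatement of the two preceding results, glued together. The existence equivalence---namely that $a\in S_{w}^{\tiny\textcircled{\tiny{\#}}}$ if and only if both $w^{\parallel a}$ and $a^{(1,3)}$ exist---is word-for-word the equivalence already established in Theorem \ref{relate to mary inverse}. So I would dispatch that half of the statement in a single sentence, simply citing Theorem \ref{relate to mary inverse}, and then concentrate on deriving the two new closed-form expressions for $a_{w}^{\tiny\textcircled{\tiny{\#}}}$.

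For the formula, the key step is to start from the representation $a_{w}^{\tiny\textcircled{\tiny{\#}}}=w^{\parallel a}a^{(1,3)}$ furnished by Theorem \ref{relate to mary inverse}, and then rewrite the factor $w^{\parallel a}$ using Lemma \ref{group result}. Since we are in the case where $a$ is $w$-core invertible, Theorem \ref{relate to mary inverse} guarantees $w\in S^{\parallel a}$, which is precisely condition (i) of Lemma \ref{group result}. Therefore the hypotheses of that lemma are met, and its concluding clause yields $w^{\parallel a}=a(wa)^\#=(aw)^\# a$, with both group inverses $(wa)^\#$ and $(aw)^\#$ existing by parts (iii) and (ii) respectively. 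Substituting these two expressions into $a_{w}^{\tiny\textcircled{\tiny{\#}}}=w^{\parallel a}a^{(1,3)}$ gives directly
\[
a_{w}^{\tiny\textcircled{\tiny{\#}}}=a(wa)^\# a^{(1,3)}=(aw)^\# a\, a^{(1,3)},
\]
which is the claimed representation.

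There is no genuine obstacle here; the proof is a routine composition of Theorem \ref{relate to mary inverse} with Lemma \ref{group result}. The only point that warrants a moment's care is confirming that Lemma \ref{group result} is legitimately applicable---that is, checking that the existence of $w^{\parallel a}$ is exactly the trigger (condition (i)) needed to invoke the lemma and to guarantee that $(wa)^\#$ and $(aw)^\#$ are meaningful. Once that observation is made explicit, the substitution is immediate and the corollary follows.
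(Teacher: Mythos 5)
Your proposal is correct and is precisely the argument the paper intends: the equivalence is quoted from Theorem \ref{relate to mary inverse}, and the formulas follow by substituting $w^{\parallel a}=a(wa)^\#=(aw)^\#a$ from Lemma \ref{group result} into $a_{w}^{\tiny\textcircled{\tiny{\#}}}=w^{\parallel a}a^{(1,3)}$. The paper leaves this as an immediate consequence, so there is nothing to add.
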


By Definition \ref{ew} above, it is clear that if $a\in S_{w}^{\tiny\textcircled{\tiny{\#}}}$ then $aw\in S^{\tiny\textcircled{\tiny{\#}}}$ for any $a,w\in S$. However, the converse statement does not hold in general. The following result presents under what conditions the converse statement holds.

\begin{theorem}\label{ideal form} Let $a,w\in S$ and let $n\geq 2$ be an integer. Then the following conditions are equivalent{\rm:}

\emph{(i)} $a\in S_w^{\tiny\textcircled{\tiny{\#}}}$.

\emph{(ii)} $a\in S[(aw)^*]^{n}a\cap S(aw)^{n-1}a$.

\emph{(iii)} $a\in awS$ and $aw\in S^{\tiny\textcircled{\tiny{\#}}}$.

In this case, $a_w^{\tiny\textcircled{\tiny{\#}}}=(aw)^{\tiny\textcircled{\tiny{\#}}}=w^{\parallel a}w(aw)^{(1,3)}$.
\end{theorem}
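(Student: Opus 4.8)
The plan is to set $b=aw$ throughout and reduce the whole statement to the core inverse of $b$, using the facts recalled in the introduction that $b\in S^{\tiny\textcircled{\tiny{\#}}}$ if and only if $b\in S^\#\cap S^{(1,3)}$, that $b^{\tiny\textcircled{\tiny{\#}}}=b^\# b\,b^{(1,3)}$, and that a core inverse is characterized by the three equations $bx^2=x$, $xb^2=b$, $(bx)^*=bx$. First I would prove (i)$\Leftrightarrow$(iii) by direct substitution, since this is the source of the identity $a_w^{\tiny\textcircled{\tiny{\#}}}=(aw)^{\tiny\textcircled{\tiny{\#}}}$. If $x$ is a $w$-core inverse of $a$, then Lemma \ref{added lemma} gives $awxa=a$, so $a=aw(xa)\in awS$, while the defining equations rewrite as $bx^2=x$, $xb^2=(xawa)w=aw=b$ and $(bx)^*=bx$; thus $x=b^{\tiny\textcircled{\tiny{\#}}}$. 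Conversely, if $a=bs\in awS$ and $x=b^{\tiny\textcircled{\tiny{\#}}}$, then $awx^2=bx^2=x$ and $(awx)^*=awx$ are immediate, and $xawa=xb(bs)=xb^2s=bs=a$. For the displayed formula I would combine $b^{\tiny\textcircled{\tiny{\#}}}=b^\# b\,b^{(1,3)}$ with $w^{\parallel a}=(aw)^\# a$ from Lemma \ref{group result}: since $w^{\parallel a}w=(aw)^\#(aw)=b^\# b$, this yields $a_w^{\tiny\textcircled{\tiny{\#}}}=(aw)^{\tiny\textcircled{\tiny{\#}}}=w^{\parallel a}w(aw)^{(1,3)}$.

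Next I would prove (iii)$\Rightarrow$(ii). Assuming $b\in S^{\tiny\textcircled{\tiny{\#}}}$ and $a=bs\in awS$, core invertibility supplies the group inverse $b^\#$ and a Hermitian projection $p=bb^{(1,3)}$ with $pb=b$; the idempotent $h=bb^\#=b^\# b$ satisfies $hb=b$ and $ha=hbs=bs=a$. Using the group-inverse power identities $h=(b^\#)^{n-1}b^{n-1}$ and $b=(b^\#)^{n-1}b^{n}$ I obtain $a=ha=(b^\#)^{n-1}(aw)^{n-1}a\in S(aw)^{n-1}a$, which is the second inclusion. For the first, $pa=pbs=bs=a$ with $p=p^*=(b^{(1,3)})^*b^*$; writing $b^*=((b^*)^\#)^{n-1}(b^*)^n$ shows $p\in S[(aw)^*]^n$, whence $a=pa\in S[(aw)^*]^n a$. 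Only the exponent bookkeeping uses $n\ge 2$, and the computations go through verbatim.

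The main work, and the step I expect to be the real obstacle, is (ii)$\Rightarrow$(i). The easy half of the extraction is clean: factoring $[(aw)^*]^n a=[(aw)^*]^{n-1}w^*(a^*a)$ gives $a\in Sa^*a$, i.e. $a\in S^{(1,3)}$; and peeling the suffix $awa=a(wa)$ off $(aw)^{n-1}a$ (legitimate since $n\ge 2$) gives $a\in Sawa$ and $a\in Swa$, so in particular $aw\in Sb^2$. By Theorem \ref{relate to mary inverse} it then suffices to upgrade this to $w\in S^{\parallel a}$, that is, by Lemma \ref{Mary left and right}, to produce the missing right-hand membership $a\in awaS$, or equivalently, by Lemma \ref{group result}, the group invertibility of $aw$. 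This is precisely where the adjoint-power inclusion must be used in full strength, not merely through its $\{1,3\}$-consequence. Indeed, the unilateral shift $b$ (take $a=b$, $w=1$) satisfies $b\in Sb^2$ and $b\in S^{(1,3)}$ yet is not group invertible, and it fails $b\in S(b^*)^n b$; so the condition $a\in S[(aw)^*]^n a$ is exactly the ingredient that excludes such one-sided behaviour.

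Concretely, I would aim to build the core inverse of $aw$ directly and verify the three core-inverse equations, rather than first proving group invertibility: the relation $b=ub^{n}$ coming from the second inclusion already provides a candidate satisfying $(\,\cdot\,)b^2=b$, the Hermitian projection $p=bb^{(1,3)}$ from the first inclusion supplies the self-adjoint factor, and the involution of the first inclusion, $b^*=b^*b^{n}s^*$, is what ties the two sides together and forces the right-sided compatibility that the shift lacks. Transferring the left-sided data carried by the two inclusions into the right-sided statement needed for $aw\in S^\#$ (equivalently $aw\in (aw)^2S$) is the crux of the entire theorem, and closing this compatibility verification is the step I expect to require the most care.
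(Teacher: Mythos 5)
Your treatment of (i)$\Leftrightarrow$(iii), of the formula $a_w^{\tiny\textcircled{\tiny{\#}}}=(aw)^{\tiny\textcircled{\tiny{\#}}}=w^{\parallel a}w(aw)^{(1,3)}$, and of (iii)$\Rightarrow$(ii) is correct; the last of these is in fact more self-contained than the paper, which obtains the inclusions $aw\in S[(aw)^*]^naw\cap S(aw)^n$ by citing \cite[Theorem 2.10]{Li2018} rather than by your explicit computation with $h=(aw)^\#aw$ and $p=aw(aw)^{(1,3)}$. The problem is (ii)$\Rightarrow$(i): you correctly diagnose that the whole difficulty is to convert the left-sided data into the right-sided membership $aw\in(aw)^2S$ (equivalently $a\in awaS$), you correctly observe that the adjoint-power inclusion is the ingredient that must do this, and then you stop --- ``closing this compatibility verification is the step I expect to require the most care'' is an announcement, not a proof. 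As written, the cycle of implications is not closed, so this is a genuine gap.

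The gap is real but small, and the mechanism you gestured at (``the involution ties the two sides together'') does close it in two lines, which is essentially what the paper does inside its (ii)$\Rightarrow$(iii) step. From $a=x[(aw)^*]^na=\bigl(x[(aw)^*]^{n-1}w^*\bigr)a^*a$ one reads off that $w(aw)^{n-1}x^*$ is a $\{1,3\}$-inverse of $a$ (using the fact, recalled in the introduction, that $a=ya^*a$ forces $y^*\in a\{1,3\}$); hence
$a=a\cdot w(aw)^{n-1}x^*\cdot a=(aw)^nx^*a\in (aw)^2S$,
since $n\geq 2$. Combined with your $aw\in S(aw)^2$ from the other inclusion, this gives $aw\in(aw)^2S\cap S(aw)^2$, i.e.\ $aw\in S^\#$, and together with $aw\in S^{(1,3)}$ (which follows from $a\in awS$ and $a\in S^{(1,3)}$, or directly from $aw\in S(aw)^*aw\cdot$-type factorizations) one gets $aw\in S^{\tiny\textcircled{\tiny{\#}}}$ and $a\in awS$, which is your (iii). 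Alternatively one can, as the paper does, simply quote the Li--Chen criterion $aw\in S^{\tiny\textcircled{\tiny{\#}}}\Leftrightarrow aw\in S[(aw)^*]^naw\cap S(aw)^n$. Either way, the missing step must actually appear; your write-up currently identifies the crux without resolving it.
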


\begin{proof}

(i) $\Rightarrow$ (ii) As $a$ is $w$-core invertible, then there exists some $x\in S$ such that $xawa=a$, $awx^2=x$ and $(awx)^*=awx$, which guarantee $xawaw=aw$, $awx^2=x$ and $(awx)^*=awx$. So $aw\in S^{\tiny\textcircled{\tiny{\#}}}$. In terms of \cite[Theorem 2.10]{Li2018} (although this result was given in a $*$-ring, it does hold in a $*$-semigroup), $aw\in S^{\tiny\textcircled{\tiny{\#}}}$ if and only if $aw\in S[(aw)^*]^naw \cap S(aw)^n$ for all integers $n\geq 2$. It follows from Theorem \ref{relate to mary inverse} that $a\in S_{w}^{\tiny\textcircled{\tiny{\#}}}$ implies $w\in S^{\parallel a}$ and hence $a\in awaS\subseteq awS$. So, $a\in S[(aw)^*]^{n}a\cap S(aw)^{n-1}a$.

(ii) $\Rightarrow$ (iii) Given $a\in S[(aw)^*]^{n}a\cap S(aw)^{n-1}a$, then $aw\in S[(aw)^*]^{n}aw\cap S(aw)^n$ and so $aw\in S^{\tiny\textcircled{\tiny{\#}}}$. Since $a\in S[(aw)^*]^{n}a$, there exists some $x\in S$ such that $a=x[(aw)^*]^{n}a=x[(aw)^*]^{n-1}w^*a^*a \subseteq Sa^*a$. So, $a\in S^{(1,3)}$ and $w(aw)^{n-1}x^*$ is a \{1,3\}-inverse of $a$. We have at once $a=aw(aw)^{n-1}x^*a\in awS$.

(iii) $\Rightarrow$ (i) Let $x\in S$ be the core inverse of $aw$. Then $awx^2=x$, $awx=(awx)^*$ and $x(aw)^2=aw$. Since $a\in awS$, there is some $t\in S$ such that $a=awt=x(aw)^2t=xawa$. Therefore, $a\in S_w^{\tiny\textcircled{\tiny{\#}}}$ and $a_w^{\tiny\textcircled{\tiny{\#}}}=(aw)^{\tiny\textcircled{\tiny{\#}}}$.

Note that if $aw\in S^{\tiny\textcircled{\tiny{\#}}}$ then $(aw)^{\tiny\textcircled{\tiny{\#}}}=(aw)^\#aw(aw)^{(1,3)}$. Consequently, $(aw)^{\tiny\textcircled{\tiny{\#}}}=w^{\parallel a}w(aw)^{(1,3)}$ by Lemma \ref{group result}.  So, $a_w^{\tiny\textcircled{\tiny{\#}}}=(aw)^{\tiny\textcircled{\tiny{\#}}}=w^{\parallel a}w(aw)^{(1,3)}$. \hfill$\Box$
\end{proof}

\begin{remark} \label{rmk1} {\rm In Theorem \ref{ideal form}, (i) $\Leftrightarrow$ (ii) does not hold for $n=1$, i.e., $a\in S(aw)^*a\cap Sa$ can not imply $a\in S_w^{\tiny\textcircled{\tiny{\#}}}$. Such as, let $S$ be an infinite complex matrix semigroup whose rows and columns are both finite and let the involution $*$ be the conjugate transpose. Suppose $a=\sum_{i=1}^{\infty}e_{i+1,i}$ and $w=1$. Then $a^*a=1$ and $a\in S(aw)^*a\cap Sa$. However, $w^{\parallel a}=1^{\parallel a}$ does not exist, so that $a$ is not $w$-core invertible by Theorem \ref{relate to mary inverse}. In fact, if $1^{\parallel a}$ exists, then, by Lemma \ref{Mary left and right}, $a\in a^2S \cap S a^2$, and consequently $a=a^2s$ for some $s\in S$. So, $1=a^*a=a^*a^2s=as$, which together with $a^*a=1$ to guarantee that $a$ is invertible. A contradiction.}
\end{remark}

Given any $a,w_1, w_2 \in S$ with $w_1 \neq w_2$, if $a_{w_1}^{\tiny\textcircled{\tiny{\#}}}$ and $a_{w_2}^{\tiny\textcircled{\tiny{\#}}}$ both exist, then $a_{w_1}^{\tiny\textcircled{\tiny{\#}}}$ and $a_{w_2}^{\tiny\textcircled{\tiny{\#}}}$ are not equal in general. There are, of course, lots of examples to illustrate this fact. However, we find an interesting counterexample such that $a_{w_1}^{\tiny\textcircled{\tiny{\#}}}=a_{w_2}^{\tiny\textcircled{\tiny{\#}}}= \cdots =a_{w_n}^{\tiny\textcircled{\tiny{\#}}}$, for different $w_i$ ($i=1,2,\cdots, n$). Such as, let $S$ and the involution $*$ be the same as that of the previous Example \ref{Ex1}. Take $a=
\begin{bmatrix}
0 & 1 \\
0 & 0 \\
\end{bmatrix}\in S$, then $a_{w_i}^{\tiny\textcircled{\tiny{\#}}}=
\begin{bmatrix}
1 & 0 \\
0 & 0 \\
\end{bmatrix}$ for any $w_i$ of the form
$\begin{bmatrix}
* & * \\
1 & 0 \\
\end{bmatrix}$.

In order to extend \cite [Theorem 2.2]{Drazin2021} from rings $R$ to semigroups $S$, Drazin interpreted right (left) annihilators in a general semigroup $S$. Following Drazin, given any $a\in S$, the right annihilator of $a$ is defined by $a^0=\{(r,s)\in S^1 \times S^1:ar=as\}$, and the left annihilator of $a$ is defined by ${^0}a=\{(p,q)\in S^1\times S^1:pa=qa\}$. If $S$ is a ring, the right annihilator of $a$ is usually defined by $a^0=\{x\in R:ax=0\}$ and the left annihilator of $a$ is usually defined by ${^0}a=\{x \in R:xa =0\}$. When $S$ is a ring, we claim the fact that $(r,s)\in a^0$ if and only if $r-s\in a^0$.

We next present the relations between Green's preorders and Drazin's left (right) annihilators in $S$.

\begin{lemma} \label{Green Drazin}  Let $a,b\in S$. Then we have

\emph{(i)} If $a\leq_\mathcal{R}b$, then ${^0}b \subseteq {^0}a$.

\emph{(ii)} If $a\leq_\mathcal{L}b$, then $b^0 \subseteq a^0$.

\emph{(iii)} If $ a\mathcal{R}b$, then ${^0}a={^0}b$.

\emph{(iv)} If $a\mathcal{L}b$, then $a^0=b^0$.

\end{lemma}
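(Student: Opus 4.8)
The plan is to unpack each of Green's preorders into an explicit factorization in $S^1$ and then chain that factorization through the defining equation of the relevant annihilator. For part (i), I would start from $a\leq_\mathcal{R}b$, which by definition supplies some $y\in S^1$ with $a=by$. To prove ${^0}b\subseteq{^0}a$, take an arbitrary $(p,q)\in{^0}b$, so that $pb=qb$, and compute $pa=p(by)=(pb)y=(qb)y=q(by)=qa$, whence $(p,q)\in{^0}a$. Part (ii) is the mirror image: from $a\leq_\mathcal{L}b$ I obtain $x\in S^1$ with $a=xb$, and for any $(r,s)\in b^0$ (that is, $br=bs$) I compute $ar=(xb)r=x(br)=x(bs)=(xb)s=as$, giving $(r,s)\in a^0$.

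Parts (iii) and (iv) then follow at once by applying (i) and (ii) in both directions. For (iii), the relation $a\mathcal{R}b$ means $a\leq_\mathcal{R}b$ and $b\leq_\mathcal{R}a$ simultaneously, so (i) yields both ${^0}b\subseteq{^0}a$ and ${^0}a\subseteq{^0}b$, hence ${^0}a={^0}b$. Symmetrically, (iv) follows by applying (ii) to $a\leq_\mathcal{L}b$ and $b\leq_\mathcal{L}a$.

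The only point demanding care — and the closest thing to an obstacle — is the crossover between the side of the Green relation and the side of the annihilator: the right preorder $\leq_\mathcal{R}$ governs the left annihilator ${^0}(\cdot)$, while the left preorder $\leq_\mathcal{L}$ governs the right annihilator $(\cdot)^0$. This is dictated by associativity, since an $\mathcal{R}$-factorization $a=by$ attaches $y$ on the right of $b$, so that to transport the equation $pb=qb$ one multiplies on the right, which is precisely what produces $pa=qa$. Keeping this duality straight, and working throughout with $S^1$ rather than $S$ so that the factors $x,y$ and the annihilator components $p,q,r,s$ are permitted to equal the identity, is all the argument requires.
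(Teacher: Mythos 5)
Your proof is correct and follows essentially the same route as the paper: factor $a$ through $b$ via the Green preorder and push the annihilator equation through that factor by associativity, then obtain (iii) and (iv) by applying (i) and (ii) in both directions. Your explicit care with $S^1$ versus $S$ is a minor tidiness improvement but does not change the argument.
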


\begin{proof} (i) If $a\leq_\mathcal{R}b$, i.e., $aS\subseteq bS$, then $a=bt$ for some $t\in S$. Suppose $(p,q)\in {^0}b$. Then $pb=qb$ and hence $pa=p(bt)=(pb)t=(qb)t=q(bt)=qa$, so that $(p,q)\in {^0}a$.

(ii) can be proved similarly.

(iii) and (iv) follow from (i) and (ii).
\hfill$\Box$
\end{proof}

\begin{theorem}\label{characteristic ew} Let $a,w\in S$. Then the following conditions are equivalent{\rm:}

\emph{(i)} $a$ is $w$-core invertible.

\emph{(ii)}  There exists some $x\in S$ such that $(1)$ $awxa=a$, $(2)$ $xawx=x$, $(3)$ $(awx)^*=awx$, $(4)$ $xawa=a$  and $(5)$ $awx^2=x$.

\emph{(iii)} There exists some $x\in S$ such that $awxa=a$, $xS=aS$  and  $Sx=Sa^*$.

\emph{(iv)} There exists some $x\in S$ such that $awxa=a$, $^0x={^0}a$  and  $x^0=(a^*)^0$.

\emph{(v)} There exists some $x\in S$ such that $awxa=a$, $^0x={^0}a$  and  $(a^*)^0 \subseteq x^0$.
\end{theorem}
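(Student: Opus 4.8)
The plan is to prove the five conditions equivalent by running the cycle (i) $\Rightarrow$ (ii) $\Rightarrow$ (iii) $\Rightarrow$ (iv) $\Rightarrow$ (v) $\Rightarrow$ (i). The first equivalence is essentially free: (i) $\Rightarrow$ (ii) is exactly Definition \ref{ew} together with Lemma \ref{added lemma}, which supplies the two extra equations $awxa=a$ and $xawx=x$, while (ii) $\Rightarrow$ (i) is trivial since (ii) already contains the three defining equations $(3),(4),(5)$. Since $S$ is a monoid we have $S=S^1$, so the hypotheses $xS=aS$ and $Sx=Sa^*$ in (iii) mean precisely $x\,\mathcal{R}\,a$ and $x\,\mathcal{L}\,a^*$; this is what lets me pass between (iii) and (iv) through Lemma \ref{Green Drazin}. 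Note too that (iv) $\Rightarrow$ (v) is immediate, as it only weakens $x^0=(a^*)^0$ to $(a^*)^0\subseteq x^0$.

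For (ii) $\Rightarrow$ (iii) I argue directly from the five equations. The inclusions $aS\subseteq xS$ and $xS\subseteq aS$ come from $a=xawa\in xS$ (eq.\ $(4)$) and $x=awx^2\in aS$ (eq.\ $(5)$); for the left ideals I use the symmetry equation $(3)$, computing $a^*=(awxa)^*=a^*(awx)=a^*awx\in Sx$ and $x=xawx=x(awx)^*\in Sa^*$. Then (iii) $\Rightarrow$ (iv) is just Lemma \ref{Green Drazin}(iii),(iv): $x\,\mathcal{R}\,a$ gives ${}^0x={}^0a$ and $x\,\mathcal{L}\,a^*$ gives $x^0=(a^*)^0$.

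The substantive direction is (v) $\Rightarrow$ (i). The key preliminary is that $awxa=a$ makes $wx$ an inner inverse of $a$, so $a$ and $a^*$ are both regular; regularity upgrades the converse of Lemma \ref{Green Drazin}, i.e.\ for a regular $b$ the inclusion $b^0\subseteq a^0$ already forces $a\le_{\mathcal{L}}b$ (and dually for ${}^0b$). From $a^*=a^*(x^*w^*)a^*$ the pair $(x^*w^*a^*,1)$ lies in $(a^*)^0$, and feeding it through $(a^*)^0\subseteq x^0$ yields $x\in Sa^*$; hence $a=awxa\in Sa^*a$, so $a$ is $\{1,3\}$-invertible. Dually, from $a=a(wx)a$ the pair $(1,awx)\in{}^0a={}^0x$ gives $x=awx^2$, which is equation $(5)$. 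It then remains to recover the symmetry equation $(3)$ and the equations $(2),(4)$, after which $x$ is a $w$-core inverse by Definition \ref{ew}.

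I expect the recovery of the self-adjointness $(awx)^*=awx$ to be the main obstacle, since the hypotheses of (v) are asymmetric and force it only indirectly. My device is to fix a $\{1,3\}$-inverse and set $p=aa^{(1,3)}$, a self-adjoint idempotent with $pa=a$ and $aS=pS$. Writing $e=awx$, one checks $ep=(awxa)a^{(1,3)}=p$ directly from $awxa=a$; on the other hand $x\in Sa^*$ gives $e^*\in aS=pS$, so $pe^*=e^*$ and therefore $ep=e$. Comparing the two evaluations of $ep$ forces $e=p$, whence $e=e^*$, which is exactly equation $(3)$. With $e=p$ in hand the rest is bookkeeping: $a^*p=(pa)^*=a^*$ gives $xawx=xe=xp=x$ (equation $(2)$), and then $(xaw,1)\in{}^0x={}^0a$ delivers $xawa=a$ (equation $(4)$). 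Thus $x$ satisfies the defining equations of a $w$-core inverse, establishing (i). The identification $e=p$ is the crux; everything else reduces to routine manipulation of regularity and the involution.
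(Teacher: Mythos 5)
Your proposal is correct, and its skeleton --- the cycle (i) $\Rightarrow$ (ii) $\Rightarrow$ (iii) $\Rightarrow$ (iv) $\Rightarrow$ (v) $\Rightarrow$ (i), with (i) $\Rightarrow$ (ii) read off from Lemma \ref{added lemma}, (ii) $\Rightarrow$ (iii) by the same ideal computations, and (iii) $\Rightarrow$ (iv) via Lemma \ref{Green Drazin} --- coincides with the paper's. The one place you genuinely diverge is the crux of (v) $\Rightarrow$ (i), namely recovering $(awx)^*=awx$. The paper gets it in one line: from $a^*=a^*(awx)^*$ the pair $(1,(awx)^*)$ lies in $(a^*)^0\subseteq x^0$, so $x=x(awx)^*$, whence $awx=awx(awx)^*$, which is visibly Hermitian. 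You instead first extract $x\in Sa^*$ (hence $a\in Sa^*a$, so a $\{1,3\}$-inverse exists), set $p=aa^{(1,3)}$, and identify $e=awx$ with $p$ by computing $ep$ two ways. Both arguments are sound; the paper's is shorter and stays entirely inside the annihilator calculus, while yours costs an extra detour through $\{1,3\}$-invertibility but yields as a by-product the identification $awx=aa^{(1,3)}$, which the paper only records later (in the proof of Theorem \ref{relate to mary inverse}). Your subsequent bookkeeping for equations $(2)$ and $(4)$ ($xp=x$ from $x\in Sa^*$ and $a^*p=a^*$, then $(xaw,1)\in{}^0x={}^0a$) is also correct, and matches the paper's order of deductions.
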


\begin{proof}

(i) $\Rightarrow$ (ii) by Lemma \ref{added lemma}.

(ii) $\Rightarrow$ (iii) Given (ii), we have $(4)$ $xawa=a$  and $(5)$ $awx^2=x$, which imply $aS=xawaS\subseteq xS$ and $xS=awx^2S\subseteq aS$. So, $xS=aS$. Also, by $(2)$ $xawx=x$ and $(3)$ $(awx)^\ast=awx$, we have $Sx=Sxawx=Sx(awx)^*=Sxx^*w^*a^*\subseteq Sa^*$. Note that $(1)$ $awxa=a$ and $(3)$ $(awx)^*=awx$. Then $Sa^*=S(awxa)^*=Sa^*(awx)^*=Sa^*awx\subseteq Sx$. Therefore, $Sx=Sa^*$.

(iii) $\Rightarrow$ (iv) follows from Lemma \ref{Green Drazin}.

(iv) $\Rightarrow$ (v) is clear.

(v) $\Rightarrow$ (i) It follows from $awxa=a$ that $(1,awx)\in {^0}a$. Since $^0x={^0}a$, we have $(1,awx)\in {^0}x$, this gives $awx^2=x$. Note that $a^*=a^*(awx)^*$. Then $(1,(awx)^*)\in (a^*)^0 \subseteq x^0$, and hence $x=x(awx)^*$, so that $awx=awx(awx)^*=(awx)^*$. This in turn implies $x=xawx$ and $(1,xaw)\in {^0}x={^0}a$, we have $xawa=a$. So, $a$ is $w$-core invertible. \hfill$\Box$
\end{proof}

Set $w=1$ in Theorem \ref{characteristic ew}, we get the characterization for the core inverse, which extends some results of \cite{Rakic2014} from a $*$-ring to a $*$-semigroup.

\begin{corollary} \label{core char} Let $a\in S$. Then the following conditions are equivalent{\rm:}

\emph{(i)} $a$ is core invertible.

\emph{(ii)}  There exists some $x\in S$ such that $(1)$ $axa=a$, $(2)$ $xax=x$, $(3)$ $(ax)^*=ax$, $(4)$ $xa^2=a$  and $(5)$ $ax^2=x$.

\emph{(iii)} There exists some $x\in S$ such that $axa=a$, $xS=aS$  and  $Sx=Sa^*$.

\emph{(iv)} There exists some $x\in S$ such that $axa=a$, $^0x={^0}a$  and  $x^0=(a^*)^0$.

\emph{(v)} There exists some $x\in S$ such that $axa=a$, $^0x={^0}a$  and  $(a^*)^0 \subseteq x^0$.
\end{corollary}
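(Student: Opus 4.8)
The plan is to obtain Corollary \ref{core char} as the $w=1$ specialization of Theorem \ref{characteristic ew}. Since that theorem is already established, the entire task reduces to checking that the hypothesis and the five conditions collapse to the stated core-inverse versions when $w$ is replaced by the unit $1$.

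First I would confirm that the $1$-core inverse is nothing but the classical core inverse, so that condition (i) of Theorem \ref{characteristic ew} at $w=1$ is precisely condition (i) here. Reading Definition \ref{ew} with $w=1$, an element $x$ is a $1$-core inverse of $a$ exactly when $ax^2=x$, $xa^2=a$ and $(ax)^*=ax$; these are verbatim the three equations $(3),(4),(5)$ by which Xu et al. \cite{Xu2017} characterize the core inverse, as recalled in the Introduction. Hence the $1$-core invertible elements coincide with the core invertible elements, and $a_1^{\tiny\textcircled{\tiny{\#}}}=a^{\tiny\textcircled{\tiny{\#}}}$.

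With this identification in place, the rest is a mechanical substitution: putting $w=1$ sends $awx\mapsto ax$ and $awa\mapsto a^2$ throughout. Thus in condition (ii) the equations $awxa=a$, $xawx=x$, $(awx)^*=awx$, $xawa=a$, $awx^2=x$ become $axa=a$, $xax=x$, $(ax)^*=ax$, $xa^2=a$, $ax^2=x$, while in conditions (iii)--(v) the defining equation $awxa=a$ becomes $axa=a$ and the ideal/annihilator conditions $xS=aS$, $Sx=Sa^*$, ${^0}x={^0}a$, $x^0=(a^*)^0$, $(a^*)^0\subseteq x^0$ are untouched because they carry no factor of $w$. Each condition of the corollary is therefore literally the $w=1$ instance of the matching condition of Theorem \ref{characteristic ew}, and the chain of equivalences transfers directly.

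There is no genuine obstacle: the result is a specialization of an already-proved theorem. The one point deserving a line of care is the identification of the $1$-core inverse with the core inverse, which is immediate from comparing defining equations; beyond that, I would only make sure the substitution $w=1$ is carried out uniformly, so that no leftover factor of $w$ survives in any of the five conditions.
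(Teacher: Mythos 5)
Your proposal is correct and is exactly the paper's proof: the corollary is obtained by setting $w=1$ in Theorem \ref{characteristic ew}, which is legitimate since Section 2 works in a $*$-monoid, and the identification of the $1$-core inverse with the classical core inverse via the three equations of Xu et al.\ is the same observation the paper makes just after Theorem \ref{uniqueness}. Nothing further is needed.
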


We next show that the core inverse, the pseudo core inverse and the Moore-Penrose inverse are special cases of the $w$-core inverse. In Proposition \ref{core another} below, we show that $a\in S$ is core invertible if and only if it is $a$-core invertible. Also, it is proved in Proposition \ref{core another 1} that $a$ is pseudo core invertible with pseudo core index $n$ if and only if $a^n$ is $a$-core invertible if and only if $a^n$ is core invertible, where $n \geq 1$ is an integer. In a $*$-semigroup, we prove in Proposition \ref{* core another} that $a\in S$ is Moore-Penrose invertible if and only if it is $a^*$-core invertible if and only if it is dual $a^*$-core invertible.

\begin{proposition} \label{core another} Let $a\in S$. Then the following conditions are equivalent{\rm:}

\emph{(i)} $a\in S^{\tiny\textcircled{\tiny{\#}}}$.

\emph{(ii)} $a\in S^\# \cap S^{(1,3)}$.

\emph{(iii)} $a\in S_a^{\tiny\textcircled{\tiny{\#}}}$.

\emph{(iv)} There exists some $x\in S$ such that $a^2x^2=x$, $xa^3=a$ and $(a^2x)^*=a^2x$.

In this case, $a^{\tiny\textcircled{\tiny{\#}}}=aa_a^{\tiny\textcircled{\tiny{\#}}}$ and $a_a^{\tiny\textcircled{\tiny{\#}}}=a^\#a^{\tiny\textcircled{\tiny{\#}}}$.
\end{proposition}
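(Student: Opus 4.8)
The plan is to route everything through Theorem \ref{relate to mary inverse} together with Mary's two characterizations of group invertibility, since these immediately reduce core-type invertibility to the joint existence of an inverse along an element and a $\{1,3\}$-inverse.

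First I would dispose of (iii) $\Leftrightarrow$ (iv), which costs nothing. Setting $w=a$ in Definition \ref{ew}, the defining equations of the $a$-core inverse read $a^2x^2=x$, $xa^3=a$ and $(a^2x)^*=a^2x$; these are verbatim the three conditions listed in (iv). Hence (iii) and (iv) are literally the same statement.

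Next I would establish (ii) $\Leftrightarrow$ (iii). Applying Theorem \ref{relate to mary inverse} with $w=a$ gives $a\in S_a^{\tiny\textcircled{\tiny{\#}}}$ if and only if $a^{\parallel a}$ and $a^{(1,3)}$ both exist. By \cite[Theorem 11]{Mary2011}, $a^{\parallel a}$ exists precisely when $a\in S^\#$, with $a^{\parallel a}=a^\#$. Combining these, (iii) is equivalent to ``$a\in S^\#$ and $a\in S^{(1,3)}$'', which is exactly (ii). For (i) $\Leftrightarrow$ (ii) I would invoke the observation recorded after Theorem \ref{uniqueness} that the $1$-core inverse coincides with the classical core inverse, so $S^{\tiny\textcircled{\tiny{\#}}}=S_1^{\tiny\textcircled{\tiny{\#}}}$. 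Now Theorem \ref{relate to mary inverse} with $w=1$ yields $a\in S_1^{\tiny\textcircled{\tiny{\#}}}$ if and only if $1^{\parallel a}$ and $a^{(1,3)}$ both exist, and by \cite[Corollary 3.4]{Mary2013} the element $1^{\parallel a}$ exists if and only if $a\in S^\#$ (with $1^{\parallel a}=aa^\#$). Thus (i) again reduces to (ii), closing the cycle of equivalences.

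Finally I would read off the displayed formulas directly from Theorem \ref{relate to mary inverse}: with $w=a$ one has $a_a^{\tiny\textcircled{\tiny{\#}}}=a^{\parallel a}a^{(1,3)}=a^\#a^{(1,3)}$, and with $w=1$ one has $a^{\tiny\textcircled{\tiny{\#}}}=1^{\parallel a}a^{(1,3)}=aa^\#a^{(1,3)}$. From these, $aa_a^{\tiny\textcircled{\tiny{\#}}}=aa^\#a^{(1,3)}=a^{\tiny\textcircled{\tiny{\#}}}$, while using the identity $a^\#aa^\#=a^\#$ gives $a^\#a^{\tiny\textcircled{\tiny{\#}}}=a^\#aa^\#a^{(1,3)}=a^\#a^{(1,3)}=a_a^{\tiny\textcircled{\tiny{\#}}}$. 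There is no serious obstacle here, since Theorem \ref{relate to mary inverse} does the heavy lifting; the only point requiring care is to keep the two distinct Mary characterizations apart, namely $a^{\parallel a}$ (for the $a$-core case) versus $1^{\parallel a}$ (for the core case), and to exploit that the core inverse is precisely the $w=1$ instance of the $w$-core inverse.
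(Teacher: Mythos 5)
Your proposal is correct and follows essentially the same route as the paper: both reduce each condition to the joint existence of an inverse along an element and a $\{1,3\}$-inverse via Theorem \ref{relate to mary inverse} (with $w=1$ for the core inverse and $w=a$ for the $a$-core inverse), identify (iii) with (iv) as a restatement of Definition \ref{ew}, and read off the formulas from $a_a^{\tiny\textcircled{\tiny{\#}}}=a^\#a^{(1,3)}$ and $a^{\tiny\textcircled{\tiny{\#}}}=aa^\#a^{(1,3)}$. Your write-up merely makes explicit the citations (Mary's Theorem 11 and Corollary 3.4) that the paper leaves implicit.
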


\begin{proof}

(i) $\Leftrightarrow$ (ii) by taking $w=1$ in Theorem \ref{relate to mary inverse}.

(ii) $\Leftrightarrow$ (iii) by Theorem \ref{relate to mary inverse} and $a\in S^\# \Leftrightarrow a\in S^{\parallel a}$.

(iii) $\Leftrightarrow$ (iv) by taking $w=a$.

It is known that $x\in S$ satisfying the condition (iv) is the $a$-core inverse of $a$, and $x=a^{\parallel a}a^{(1,3)}=a^\#a^{(1,3)}$.

So, $a^{\tiny\textcircled{\tiny{\#}}}=ax=aa_a^{\tiny\textcircled{\tiny{\#}}}$ and $x=a_a^{\tiny\textcircled{\tiny{\#}}}=a^\#a^{\tiny\textcircled{\tiny{\#}}}$.
\hfill$\Box$
\end{proof}

Here is a consequence of Theorem \ref{ideal form} and Proposition \ref{core another}. Given any $a\in S$, then $a$ is core invertible if and only if $a$ is $a$-core invertible if and only if $a^2$ is core invertible and $a\in a^2S$ if and only if $a^2$ is $a^2$-core invertible and $a\in a^2S$ if and only if $a^4$ is core invertible and $a\in a^2S$ and $a^2\in a^4S$ if and only if $a^4$ is $a^4$-core invertible and $a\in a^2S$ and $a^2\in a^4S$ if and only if $a^8$ is core invertible and $a\in a^2S$ and $a^2\in a^4S$ and $a^4\in a^8S$. These equivalences can be written continually. Note the fact that $a\in a^2S$ implies $a^2\in a^4S$. Indeed, given $a\in a^2S$, then there is some $t\in S$ such that $a=a^2t=a(a^2t)t=a^3t^2=\cdots =a^nt^{n-1}\in a^nS$ for any integer $n\geq 1$, hence $a^2\in a^4S$ and $a^4\in a^8S$. So, we claim the fact that $a$ is $a$-core invertible if and only if $a$ is core invertible if and only if $a^p$ is core invertible and $a\in a^pS$ for some integer $p\geq 1$.

It is of interest to consider whether the equivalence above holds when the power of $a$ is no less than one. Precisely, whether $a^n$ is core invertible is equivalent to that $a^n$ is $a$-core invertible, for any integer $n\geq 1$. The following result gives a positive answer.

\begin{proposition} \label{core another 1} Let $a\in S$ and let $n\geq 1$ be an integer. Then the following conditions are equivalent{\rm:}

\emph{(i)} $a \in S^{\tiny{\textcircled{D}}}$ with {\rm I}$(a)=n$.

\emph{(ii)} $a^n \in S_a^{\tiny\textcircled{\tiny{\#}}}$.

\emph{(iii)} $a^n\in S^{\tiny\textcircled{\tiny{\#}}}$.

In this case, $a^{\tiny{\textcircled{D}}}={a^n}(a^n)_{a}^{\tiny\textcircled{\tiny{\#}}}=a^{n-1}(a^n)^{\tiny\textcircled{\tiny{\#}}}$.
\end{proposition}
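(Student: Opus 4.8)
The plan is to funnel all three conditions into the same pair of requirements --- a Drazin-type invertibility of $a$ controlled by $n$, together with the $\{1,3\}$-invertibility of $a^n$ --- and then to read the displayed formula off the representations already available. Throughout I work in the $*$-monoid $S$, so that $S^1=S$.

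For (ii) $\Leftrightarrow$ (iii) I would apply Theorem \ref{relate to mary inverse} with element $a^n$ and weight $a$: it gives $a^n\in S_a^{\tiny\textcircled{\tiny{\#}}}$ iff $a^{\parallel a^n}$ and $(a^n)^{(1,3)}$ both exist. On the other side, Proposition \ref{core another} recasts $a^n\in S^{\tiny\textcircled{\tiny{\#}}}$ as $a^n\in S^\#\cap S^{(1,3)}$. Since $(a^n)^{(1,3)}$ is common to both, it suffices to prove that $a^{\parallel a^n}$ exists iff $a^n\in S^\#$. Using Lemma \ref{Mary left and right}, the existence of $a^{\parallel a^n}$ is the condition $a^n\leq_\mathcal{H}a^{2n+1}$, i.e. $a^n\in a^{n+1}S\cap Sa^{n+1}$; and this is precisely the ideal test characterizing $a^n\in S^\#$, both amounting to $a\in S^D$ with ${\rm ind}(a)\leq n$. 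By \cite[Theorem 11]{Mary2011} the common value is $a^{\parallel a^n}=a^D$.

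For (i) $\Leftrightarrow$ (iii) I would invoke \cite[Theorem 2.3]{Gao2018}, by which $a\in S^{\tiny{\textcircled{D}}}$ with ${\rm I}(a)=n$ is equivalent to $a\in S^D$ with ${\rm ind}(a)=n$ and $a^k\in S^{(1,3)}$ for every $k\geq n$, together with the above reading of (iii) as $a^n\in S^\#\cap S^{(1,3)}$. The forward implication is immediate, since ${\rm ind}(a)=n$ yields $a^n\in S^\#$ and the case $k=n$ yields $a^n\in S^{(1,3)}$. For the converse the task is to produce the whole family $\{a^k\}_{k\geq n}$ of $\{1,3\}$-invertible elements out of the single $a^n$: I would use that $a^n\in S^\#$ forces ${\rm ind}(a)\leq n$, so the right ideals $a^kS$ stabilize for $k\geq n$; writing $a^n\in S^{(1,3)}$ as $a^nS=pS$ for a Hermitian idempotent $p=a^n(a^n)^{(1,3)}$, the equalities $a^kS=a^nS=pS$ then give $a^k\in S^{(1,3)}$ for all $k\geq n$, and \cite{Gao2018} delivers pseudo core invertibility. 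For the formula I would assemble the pieces in hand: \cite{Xu2017} gives $(a^n)^{\tiny\textcircled{\tiny{\#}}}=(a^n)^\#a^n(a^n)^{(1,3)}$, Theorem \ref{relate to mary inverse} gives $(a^n)_a^{\tiny\textcircled{\tiny{\#}}}=a^{\parallel a^n}(a^n)^{(1,3)}$, and \cite[Theorem 2.3]{Gao2018} gives $a^{\tiny{\textcircled{D}}}=a^Da^n(a^n)^{(1,3)}$. Combining $a^{\parallel a^n}=a^D$ with the commuting identities $a^{n-1}(a^n)^\#=a^D$ and $a^Da^n=a^na^D$, both $a^n(a^n)_a^{\tiny\textcircled{\tiny{\#}}}$ and $a^{n-1}(a^n)^{\tiny\textcircled{\tiny{\#}}}$ collapse to $a^Da^n(a^n)^{(1,3)}=a^{\tiny{\textcircled{D}}}$.

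The step I expect to be the main obstacle is exactly the converse half of (i) $\Leftrightarrow$ (iii): bootstrapping $\{1,3\}$-invertibility from a single power to all higher powers, and --- the more delicate point --- locating the index correctly, since $a^n\in S^{\tiny\textcircled{\tiny{\#}}}$ by itself only forces ${\rm ind}(a)\leq n$ rather than equality. I would handle both through the stabilization of the ideals $a^kS$, which holds precisely from $k={\rm ind}(a)$ onward; this simultaneously supplies the $\{1,3\}$-family needed to apply \cite{Gao2018} and controls where the index sits, so that the identification of $n$ with ${\rm I}(a)={\rm ind}(a)$ must be read as fixing $n$ to be that common index.
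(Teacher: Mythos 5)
Your proof is correct and follows essentially the same route as the paper's: both reduce all three conditions to the conjunction of the existence of $a^{\parallel a^n}$ (equivalently $a^n\in S^\#$, equivalently $a\in S^D$ with ${\rm ind}(a)\le n$) and of $(a^n)^{(1,3)}$, via Theorem \ref{relate to mary inverse}, Lemma \ref{Mary left and right} and \cite[Theorem 2.3]{Gao2018}, and then assemble the displayed formula from $a^{\parallel a^n}=a^D=a^{n-1}(a^n)^\#$. Your closing caveat is well taken and in fact sharper than the paper's own argument, which asserts that $a^n\in S^\#$ gives ${\rm ind}(a)=n$ when it only gives ${\rm ind}(a)\le n$; as you say, the equivalence with the stated index must be read with $n$ pinned to the common value ${\rm I}(a)={\rm ind}(a)$.
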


\begin{proof} (i) $\Rightarrow$ (ii) Suppose $a \in S^{\tiny{\textcircled{D}}}$ with I$(a)=n$. Then, by \cite[Theorem 2.3]{Gao2018}, $a\in S^D$ with ind$(a)=n$ and $a^n\in S^{(1,3)}$. Again, it follows from \cite[Theorem 11]{Mary2011} (2) that $a\in S^D$ with ind$(a)=n$ yields $a\in S^{\parallel {a^n}}$. So, $a^n \in S_a^{\tiny\textcircled{\tiny{\#}}}$ by Theorem \ref{relate to mary inverse}.

(ii) $\Rightarrow$ (iii) Given $a^n \in S_a^{\tiny\textcircled{\tiny{\#}}}$, then, by Theorem \ref{relate to mary inverse}, $a\in S^{\parallel a^n}$ and $a^n\in S^{(1,3)}$. Since $a\in S^{\parallel a^n}$, we have at once $a^n\in a^{2n+1}S \cap Sa^{2n+1} \subseteq a^{2n}S \cap Sa^{2n}$, and consequently $a^n\in S^\#$. So, $a^n\in S^{\tiny\textcircled{\tiny{\#}}}$.

(iii) $\Rightarrow$ (i) As $a^n\in S^{\tiny\textcircled{\tiny{\#}}}$, then $a^n\in S^\# \cap S^{(1,3)}$. To show (i), it suffices to prove $a\in S^D$ with ind$(a)=n$ by \cite[Theorem 2.3]{Gao2018}. Once given $a^n\in S^\#$, then $a\in S^D$ with ind$(a)=n$ and $a^D=a^{n-1}(a^n)^\#$ (see, e.g., \cite[page 1111]{Zhu2019}), as required.

Let $x=a^{\parallel a^n}(a^n)^{(1,3)}$ be the $a$-core inverse of $a^n$. Then $x=a^D(a^n)^{(1,3)}$ since $a^D=a^{\parallel a^n}$ in terms of \cite[Theorem 11]{Mary2011} (2). So, by \cite[Theorem 2.3]{Gao2018}, $a^{\tiny{\textcircled{D}}}=a^Da^n(a^n)^{(1,3)}=a^n a^D(a^n)^{(1,3)}=a^nx$. Similarly, we have $a^{\tiny{\textcircled{D}}}=a^n a^D(a^n)^{(1,3)}=a^n(a^{n-1})(a^n)^\#(a^n)^{(1,3)}=a^{n-1}a^n(a^n)^\#(a^n)^{(1,3)}=a^{n-1}(a^n)^{\tiny\textcircled{\tiny{\#}}}$.
\hfill$\Box$
\end{proof}

Dually, we can give the definition of the dual $v$-core inverse in a $*$-semigroup $S$. For any $a,v\in S$, $a$ is called dual $v$-core invertible if there exists some $y\in S$ such that $avay=a$, $y^2va=y$ and $yva=(yva)^*$. Such an $y$ is called a dual $v$-core inverse of $a$. The dual $v$-core inverse of $a$ is unique if it exists, and is denoted by $a_{v,\tiny\textcircled{\tiny{\#}}}$. By $S_{v,\tiny\textcircled{\tiny{\#}}}$ we denote the set of all dual $v$-core invertible elements in $S$. Several characterizations of the dual $v$-core inverse are given below.

\begin{theorem}\label{characteristic vf} Let $a,v\in S$. Then the following conditions are equivalent{\rm:}

\emph{(i)} $a$ is dual $v$-core invertible.

\emph{(ii)} There exists some $y\in S$ such that $(1')$ $ayva=a$, $(2')$ $yvay=y$, $(3')$ $(yva)^\ast=yva$, $(4')$ $avay=a$  and $(5')$ $y^2va=y$.

\emph{(iii)} There exists some $y\in S$ such that $ayva=a$, $yS=a^*S$  and  $Sy=Sa$.

\emph{(iv)} There exists some $y\in S$ such that $ayva=a$, ${^0}y={^0}(a^*)$  and  $y^0=a^0$.

\emph{(v)} There exists some $y\in S$ such that $ayva=a$, ${^0}(a^*)\subseteq {^0}y$  and  $a^0=y^0$.
\end{theorem}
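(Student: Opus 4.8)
The plan is to run the cycle (i) $\Rightarrow$ (ii) $\Rightarrow$ (iii) $\Rightarrow$ (iv) $\Rightarrow$ (v) $\Rightarrow$ (i), mirroring the proof of Theorem \ref{characteristic ew} under the left--right duality combined with the interchange $a \leftrightarrow a^*$. Every implication here is the transpose of the corresponding one there, so the bulk of the work is bookkeeping rather than new ideas. The tools I would use are the dual of Lemma \ref{added lemma}, the passage between principal one-sided ideals and Green's relations, and Lemma \ref{Green Drazin} linking those relations to the annihilators.

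For (i) $\Rightarrow$ (ii) I would first record the dual of Lemma \ref{added lemma}. From the defining equations $avay=a$, $y^2va=y$, $(yva)^*=yva$ I substitute the second relation to get $a=avay=ava(y^2va)=(avay)(yva)=ayva$, which is $(1')$, and substitute the first to get $y=y^2va=y^2v(avay)=(y^2va)(vay)=yvay$, which is $(2')$. Thus $yv$ is a $\{1,2,4\}$-inverse of $a$, establishing (ii). For (ii) $\Rightarrow$ (iii) I read off the ideal equalities directly: $(5')$ gives $y=y^2va\in Sa$ and $(4')$ gives $a=avay\in Sy$, so $Sy=Sa$; and using $(2')$ with $(3')$, $y=yvay=(yva)^*y=a^*v^*y^*y\in a^*S$, while using $(1')$ with $(3')$, $a^*=(ayva)^*=(yva)^*a^*=yva\,a^*\in yS$, whence $yS=a^*S$.

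The implications (iii) $\Rightarrow$ (iv) and (iv) $\Rightarrow$ (v) are quick. Since $S$ is a monoid, $yS=a^*S$ means $y\,\mathcal{R}\,a^*$ and $Sy=Sa$ means $y\,\mathcal{L}\,a$; then Lemma \ref{Green Drazin}(iii),(iv) yield ${^0}y={^0}(a^*)$ and $y^0=a^0$, which is (iv). Weakening the first equality to an inclusion gives (v).

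The main step, as in Theorem \ref{characteristic ew}, is (v) $\Rightarrow$ (i), which I would carry out by annihilator manipulation. From $ayva=a$ we have $(yva,1)\in a^0=y^0$, hence $y(yva)=y$, i.e. $y^2va=y$, recovering $(5')$. Next, $a^*=(ayva)^*=(yva)^*a^*$ shows $((yva)^*,1)\in {^0}(a^*)\subseteq {^0}y$, so $(yva)^*y=y$; right-multiplying by $va$ gives $yva=(yva)^*(yva)$, and since the right-hand side is self-adjoint, so is $yva$, i.e. $(yva)^*=yva$, recovering $(3')$. Combining the last two, $yvay=(yva)^*y=y$, so $(vay,1)\in y^0=a^0$ and therefore $avay=a$, recovering $(4')$. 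These three equations are exactly the definition, so $a$ is dual $v$-core invertible. I expect the only delicate point to be the correct placement of each annihilator — which side it acts on after the $a\leftrightarrow a^*$ swap — and the verification that the Hermitian element $(yva)^*(yva)$ forces $(yva)^*=yva$; once the duality is set up consistently, the rest is routine.
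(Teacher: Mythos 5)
Your proposal is correct and is exactly the intended argument: the paper omits the proof of this theorem because it is the left--right/$a\leftrightarrow a^*$ dual of its proof of Theorem \ref{characteristic ew}, and your cycle (i)$\Rightarrow$(ii)$\Rightarrow$(iii)$\Rightarrow$(iv)$\Rightarrow$(v)$\Rightarrow$(i), including the annihilator manipulations in the last step, dualizes that proof faithfully. The delicate points you flagged (placement of the annihilators and the Hermitian step $yva=(yva)^*(yva)$) are handled correctly.
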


\begin{theorem} \label{relate to dual mary inverse} Let $a,v\in S$. Then $a\in S_{v,\tiny\textcircled{\tiny{\#}}}$ if and only if $v^{\parallel a}$ and $a^{(1,4)}$ both exist. In this case, $a_{v,\tiny\textcircled{\tiny{\#}}}$=$a^{(1,4)}v^{\parallel a}=a^{(1,4)}a(va)^\#=a^{(1,4)}(av)^\#a$.
\end{theorem}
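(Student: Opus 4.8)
The plan is to dualize the proof of Theorem~\ref{relate to mary inverse} line for line, swapping right ideals and $\{1,3\}$-inverses for left ideals and $\{1,4\}$-inverses and transposing each product. I will freely use the dual of Lemma~\ref{added lemma} already recorded in Theorem~\ref{characteristic vf}(ii): if $y$ is a dual $v$-core inverse of $a$, then besides the defining relations $avay=a$, $y^2va=y$, $(yva)^*=yva$, one also has $ayva=a$, $yvay=y$, and $yv$ is a $\{1,2,4\}$-inverse of $a$.

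For the forward implication, suppose $y\in S$ is the dual $v$-core inverse of $a$. From $avay=a$ I read off $a=(ava)y\in avaS$ at once. For the other side I substitute $y=y^2va$ into $a=ayva$ to obtain $a=a(y^2va)va=(ay^2v)(ava)\in Sava$. Hence $a\in avaS\cap Sava$, and Lemma~\ref{Mary left and right} yields $v\in S^{\parallel a}$. To see $a\in S^{(1,4)}$, I note that $yv$ satisfies $a(yv)a=ayva=a$ together with $((yv)a)^*=(yva)^*=yva=(yv)a$, so $yv$ is a $\{1,4\}$-inverse of $a$.

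For the converse, assume $v^{\parallel a}$ and $a^{(1,4)}$ exist and set $y=a^{(1,4)}v^{\parallel a}$; I will verify the three defining equations. Using $v^{\parallel a}va=a$ and the symmetry $(a^{(1,4)}a)^*=a^{(1,4)}a$, the product $yva=a^{(1,4)}(v^{\parallel a}va)=a^{(1,4)}a$ is self-adjoint, which is $(3')$. Since $v^{\parallel a}\in aS$, say $v^{\parallel a}=az$, the relation $aa^{(1,4)}a=a$ gives $aa^{(1,4)}v^{\parallel a}=v^{\parallel a}$, whence $avay=av(aa^{(1,4)}v^{\parallel a})=avv^{\parallel a}=a$ by the defining property $avv^{\parallel a}=a$; this is $(4')$. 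Finally, since $v^{\parallel a}\in Sa$, say $v^{\parallel a}=sa$, I get $v^{\parallel a}a^{(1,4)}a=v^{\parallel a}$, so $y^2va=a^{(1,4)}v^{\parallel a}a^{(1,4)}(v^{\parallel a}va)=a^{(1,4)}v^{\parallel a}a^{(1,4)}a=a^{(1,4)}v^{\parallel a}=y$, which is $(5')$. Thus $a_{v,\tiny\textcircled{\tiny{\#}}}=a^{(1,4)}v^{\parallel a}$, and substituting the two expressions $v^{\parallel a}=a(va)^\#=(av)^\#a$ from Lemma~\ref{group result} into this formula produces the remaining representations $a^{(1,4)}a(va)^\#$ and $a^{(1,4)}(av)^\#a$.

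The computations are entirely routine once the correct sidedness is pinned down; the one point to watch is precisely which of the two one-sided memberships $v^{\parallel a}\in aS$ and $v^{\parallel a}\in Sa$ is invoked in each of $(4')$ and $(5')$, since interchanging them would break the cancellation. So the only real obstacle here is bookkeeping rather than any genuine difficulty, the whole argument being the mirror image of Theorem~\ref{relate to mary inverse}.
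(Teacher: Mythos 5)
Your proof is correct and is exactly the argument the paper intends: the theorem is stated as the dual of Theorem~\ref{relate to mary inverse} with no separate proof given, and your line-for-line dualization (using $a\in avaS\cap Sava$ with Lemma~\ref{Mary left and right} for one direction, and verifying $(3')$, $(4')$, $(5')$ for $y=a^{(1,4)}v^{\parallel a}$ for the other) is the expected mirror-image argument. The sidedness bookkeeping you flag is handled correctly, and the extra representations follow from Lemma~\ref{group result} just as you say.
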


Combining with Theorems \ref{relate to mary inverse} and \ref{relate to dual mary inverse}, we have the following result.

\begin{proposition} \label{wv core char} Let $a,w,v\in S$. Then $a\in S_w^{\tiny\textcircled{\tiny{\#}}} \cap S_{v,\tiny\textcircled{\tiny{\#}}}$ if and only if $w,v\in S^ {\parallel a}$ and $a\in S^{\dag}$.
\end{proposition}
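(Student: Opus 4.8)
The plan is to reduce the statement entirely to the two preceding characterization theorems, so that the only substantive point left is a classical fact about Moore--Penrose inverses. By Theorem \ref{relate to mary inverse}, membership $a\in S_w^{\tiny\textcircled{\tiny{\#}}}$ is equivalent to the simultaneous existence of $w^{\parallel a}$ and $a^{(1,3)}$, i.e. to $w\in S^{\parallel a}$ together with $a\in S^{(1,3)}$. Dually, Theorem \ref{relate to dual mary inverse} says that $a\in S_{v,\tiny\textcircled{\tiny{\#}}}$ is equivalent to $v\in S^{\parallel a}$ together with $a\in S^{(1,4)}$. Intersecting these two equivalences, I would obtain at once that $a\in S_w^{\tiny\textcircled{\tiny{\#}}} \cap S_{v,\tiny\textcircled{\tiny{\#}}}$ holds if and only if $w,v\in S^{\parallel a}$ and $a\in S^{(1,3)}\cap S^{(1,4)}$.

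It therefore remains to identify $a\in S^{(1,3)}\cap S^{(1,4)}$ with $a\in S^\dag$, and this is the heart of the argument. One direction is immediate: a Moore--Penrose inverse $a^\dag$ satisfies all of $(1)$--$(4)$, so it is in particular both a $\{1,3\}$-inverse and a $\{1,4\}$-inverse of $a$, whence $a\in S^\dag$ forces $a\in S^{(1,3)}\cap S^{(1,4)}$. For the converse I would fix $b\in a\{1,3\}$ and $c\in a\{1,4\}$ and propose the candidate $x=cab$ (that is, $x=a^{(1,4)}aa^{(1,3)}$). The verification rests on first establishing the two identities $ax=ab$ and $xa=ca$, each of which follows by a single application of $aba=a$ (respectively $aca=a$); since $ab$ and $ca$ are Hermitian by hypothesis, these identities deliver Penrose equations $(3)$ and $(4)$ at once. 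Equation $(1)$ then reads $axa=(ab)a=a$, and equation $(2)$, $xax=x$, follows by reducing the word $cabacab$ using the relations $aba=a$ and $aca=a$. Hence $x=a^\dag$, and $a\in S^\dag$.

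The only place demanding care is this last construction, so I expect the main obstacle to be the bookkeeping in checking that $x=a^{(1,4)}aa^{(1,3)}$ satisfies all four Penrose equations in the noncommutative, purely multiplicative setting of a $*$-semigroup---in particular confirming $xax=x$, where one must cancel words carefully rather than rely on any additive or projection-theoretic shortcut. Once the equivalence $a\in S^\dag\Leftrightarrow a\in S^{(1,3)}\cap S^{(1,4)}$ is in hand, combining it with the intersection obtained in the first step yields exactly the asserted equivalence, in both directions, completing the proof.
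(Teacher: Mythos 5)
Your proposal is correct and follows the same route the paper intends: the paper states this proposition as an immediate consequence of Theorems \ref{relate to mary inverse} and \ref{relate to dual mary inverse}, leaving implicit the classical identification $S^{(1,3)}\cap S^{(1,4)}=S^\dag$, which you spell out correctly via the standard candidate $a^{(1,4)}aa^{(1,3)}$. All four Penrose equations check out exactly as you describe, so your write-up is simply a more detailed version of the paper's one-line justification.
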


For any $a,w\in S$, it is known from Lemma \ref{Mary left and right} that $w$ is invertible along $a$ if and only if $a\in awaS \cap Sawa$ if and only if $a^*\in a^*w^*a^*S \cap Sa^*w^*a^*$ if and only if $w^*$ is invertible along $a^*$. Moreover, $(w^*)^{\parallel {a^*}}=(w^{\parallel a})^*$. One also knows that $a\in S^{(1,3)}$ if and only if $a^*\in S^{(1,4)}$. Moreover, $(a^*)^{(1,4)}=(a^{(1,3)})^*$. We hence have the following result.

\begin{proposition} Let $a,w\in S$. Then $a$ is $w$-core invertible if and only if $a^*$ is dual $w^*$-core invertible. In this case, $(a_w^{\tiny\textcircled{\tiny{\#}}})^*=(a^*)_{w^*, \tiny\textcircled{\tiny{\#}}}$.
\end{proposition}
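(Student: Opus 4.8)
The plan is to read the result off the two existence-and-formula theorems already proved, Theorems \ref{relate to mary inverse} and \ref{relate to dual mary inverse}, together with the compatibility of the involution with the inverse along an element and with one-sided inverses recorded in the paragraph immediately before the statement: namely $w\in S^{\parallel a}\Leftrightarrow w^*\in S^{\parallel a^*}$ with $(w^{\parallel a})^*=(w^*)^{\parallel a^*}$, and $a\in S^{(1,3)}\Leftrightarrow a^*\in S^{(1,4)}$ with $(a^{(1,3)})^*=(a^*)^{(1,4)}$.

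First I would establish the biconditional. By Theorem \ref{relate to mary inverse}, $a$ is $w$-core invertible if and only if $w^{\parallel a}$ and $a^{(1,3)}$ both exist. Using the two compatibility facts, the existence of $w^{\parallel a}$ is equivalent to that of $(w^*)^{\parallel a^*}$, and the existence of $a^{(1,3)}$ to that of $(a^*)^{(1,4)}$. By Theorem \ref{relate to dual mary inverse} applied to the pair $(a^*,w^*)$, the simultaneous existence of $(w^*)^{\parallel a^*}$ and $(a^*)^{(1,4)}$ is exactly the condition that $a^*$ be dual $w^*$-core invertible. Chaining these equivalences gives $a\in S_w^{\tiny\textcircled{\tiny{\#}}}\Leftrightarrow a^*\in S_{w^*,\tiny\textcircled{\tiny{\#}}}$.

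For the formula I would compute directly: Theorem \ref{relate to mary inverse} gives $a_w^{\tiny\textcircled{\tiny{\#}}}=w^{\parallel a}a^{(1,3)}$, so $(a_w^{\tiny\textcircled{\tiny{\#}}})^*=(a^{(1,3)})^*(w^{\parallel a})^*=(a^*)^{(1,4)}(w^*)^{\parallel a^*}$ after reversing the order of the product and applying the two identities. Theorem \ref{relate to dual mary inverse} applied to $(a^*,w^*)$ gives $(a^*)_{w^*,\tiny\textcircled{\tiny{\#}}}=(a^*)^{(1,4)}(w^*)^{\parallel a^*}$, and comparing the two right-hand sides yields the claimed equality $(a_w^{\tiny\textcircled{\tiny{\#}}})^*=(a^*)_{w^*,\tiny\textcircled{\tiny{\#}}}$.

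As a self-contained alternative that avoids the two theorems, I would verify straight from the definitions that $x$ is the $w$-core inverse of $a$ if and only if $x^*$ is the dual $w^*$-core inverse of $a^*$: applying $*$ to the defining equations $awx^2=x$, $xawa=a$, $(awx)^*=awx$ and using $(pq)^*=q^*p^*$ turns them respectively into $(x^*)^2w^*a^*=x^*$, $a^*w^*a^*x^*=a^*$ and $(x^*w^*a^*)^*=x^*w^*a^*$, which are precisely the three equations defining the dual $w^*$-core inverse of $a^*$; uniqueness then identifies the two inverses. There is no real obstacle here; the only point demanding attention is the bookkeeping of the order-reversal under $*$, so that each $w$-core equation is matched with the correct dual $v$-core equation.
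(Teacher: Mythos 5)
Your main argument is exactly the paper's proof: it reads the equivalence and the formula off Theorems \ref{relate to mary inverse} and \ref{relate to dual mary inverse} together with the identities $(w^{\parallel a})^*=(w^*)^{\parallel a^*}$ and $(a^{(1,3)})^*=(a^*)^{(1,4)}$ recorded in the paragraph preceding the statement. Your self-contained alternative --- applying $*$ to the three defining equations and matching them, after the order reversal, with the three dual equations, then invoking uniqueness of the dual $w^*$-core inverse --- is also correct, and is in fact even more elementary than what the paper does.
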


\begin{proof}
By Theorems \ref{relate to mary inverse} and \ref{relate to dual mary inverse}, we have $(a^*)_{w^*, \tiny\textcircled{\tiny{\#}}}=(a^*)^{(1,4)}(w^*)^{\parallel a^*}=(a^{(1,3)})^*(w^{\parallel a})^*=(w^{\parallel a}a^{(1,3)})^*=(a_w^{\tiny\textcircled{\tiny{\#}}})^*$.
\hfill$\Box$
\end{proof}

\begin{lemma} \label{MP inverse ideal char} {\rm \cite[Theorem 3.12]{Zhu2017}} Let $a\in S$. The following conditions are equivalent{\rm:}

\emph{(i)} $a\in S^{\dag}$.

\emph{(ii)} $a\in aa^*aS$.

\emph{(iii)} $a\in Saa^*a$.

In this case, $a^{\dag}=(ax)^*=(ya)^*$, where $x,y\in S$ satisfy $a=aa^*ax=yaa^*a$.
\end{lemma}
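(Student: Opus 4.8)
The plan is to prove the three-way equivalence by establishing (i) $\Leftrightarrow$ (ii) directly and then deducing (iii) from (ii) by a symmetry argument through the involution; the displayed formula for $a^{\dagger}$ will fall out of the two constructions. For the easy implication (i) $\Rightarrow$ (ii), assume $a\in S^{\dagger}$ and exploit that $aa^{\dagger}$ and $a^{\dagger}a$ are Hermitian. First rewrite $a^{\dagger}=a^{\dagger}(aa^{\dagger})=a^{\dagger}(a^{\dagger})^{*}a^{*}$ and apply $*$ to obtain $(a^{\dagger})^{*}=a\,[a^{\dagger}(a^{\dagger})^{*}]\in aS$. Then start from $a=a(a^{\dagger}a)=a(a^{\dagger}a)^{*}=aa^{*}(a^{\dagger})^{*}$ and substitute the previous expression for $(a^{\dagger})^{*}$, which yields $a=aa^{*}a\,[a^{\dagger}(a^{\dagger})^{*}]$, i.e. $a\in aa^{*}aS$ with witness $x=a^{\dagger}(a^{\dagger})^{*}$.

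For the substantial implication (ii) $\Rightarrow$ (i), assume $a=aa^{*}ax$ and set $c=a^{*}a$, which is Hermitian. Left-multiplying by $a^{*}$ gives $c=c^{2}x$, and applying $*$ gives $c=x^{*}c^{2}$; hence $c\in c^{2}S\cap Sc^{2}$, so by the group-invertibility criterion recalled in the Introduction, $c\in S^{\#}$ with $c^{\#}=cx^{2}$. The two structural facts I will extract are: $cc^{\#}=c^{2}x^{2}=(c^{2}x)x=cx$, which is Hermitian because $c$, and hence $c^{\#}$, is Hermitian; and $a=a(a^{*}a)x=acx=a(cc^{\#})$, whence also $a^{*}=cc^{\#}a^{*}$. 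With these identities secured I will check that $g:=(ax)^{*}=x^{*}a^{*}$ is the Moore--Penrose inverse. Concretely, equation $(1)$ reduces to $aga=ax^{*}c=a(cx)=acx=a$; equation $(4)$ becomes $ga=x^{*}c=cx$, which is Hermitian; equation $(3)$ follows after rewriting $ag=ax^{*}a^{*}=ax^{*}(cx\,a^{*})=a(x^{*}cx)a^{*}=ac^{\#}a^{*}$, a Hermitian element; and equation $(2)$ follows from $gag=g(ag)=x^{*}(a^{*}a)c^{\#}a^{*}=x^{*}(cc^{\#})a^{*}=x^{*}a^{*}=g$. This simultaneously proves $a\in S^{\dagger}$ and the formula $a^{\dagger}=(ax)^{*}$.

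For (iii), I will deduce it from (ii) by symmetry rather than repeat the argument. Since $a\in Saa^{*}a$ is equivalent, after applying $*$, to $a^{*}\in a^{*}(a^{*})^{*}a^{*}S$, which is exactly statement (ii) written for $a^{*}$, and since $a\in S^{\dagger}\Leftrightarrow a^{*}\in S^{\dagger}$ with $(a^{*})^{\dagger}=(a^{\dagger})^{*}$, applying the already-established equivalence (i) $\Leftrightarrow$ (ii) to $a^{*}$ gives (i) $\Leftrightarrow$ (iii) for $a$. Transposing the formula back: if $a=yaa^{*}a$, then $a^{*}=a^{*}aa^{*}y^{*}$ is the (ii)-factorization for $a^{*}$, so $(a^{\dagger})^{*}=(a^{*})^{\dagger}=(a^{*}y^{*})^{*}$, giving $a^{\dagger}=a^{*}y^{*}=(ya)^{*}$.

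I expect the main obstacle to be (ii) $\Rightarrow$ (i): the hypothesis $a\in aa^{*}aS$ is \emph{one-sided}, so the delicate point is to manufacture the two Hermitian projections required for a genuine two-sided Moore--Penrose inverse out of a single right factorization. Everything hinges on the identities $cx=cc^{\#}=(cx)^{*}$ and $a^{*}=cc^{\#}a^{*}$; once these are in place, the four Penrose equations collapse to the group-inverse identities for $c=a^{*}a$ by routine substitution, and no further cleverness is needed.
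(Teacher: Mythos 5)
Your proof is correct. Note, however, that the paper does not prove this lemma at all: it is quoted verbatim from \cite[Theorem 3.12]{Zhu2017}, so there is no internal proof to compare against. What you have supplied is a self-contained verification using only facts already recorded in the paper's introduction (the criterion $c\in c^{2}S\cap Sc^{2}$ for group invertibility together with the formula $c^{\#}=cx^{2}$), which is a reasonable substitute for the citation. The one place worth flagging is the hinge of (ii) $\Rightarrow$ (i): you need both $c=c^{2}x$ \emph{and} $c=x^{*}c^{2}$ before invoking group invertibility of $c=a^{*}a$, and you correctly obtain the second identity by applying the involution to the first -- this is exactly how the one-sided hypothesis $a\in aa^{*}aS$ gets upgraded to a two-sided condition on the Hermitian element $c$. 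The subsequent identities $x^{*}c=cc^{\#}=cx$ (Hermitian, since $c^{\#}$ inherits Hermitianness from $c$ by uniqueness of the group inverse) and $a^{*}=cc^{\#}a^{*}$ are verified correctly, the four Penrose equations for $g=(ax)^{*}$ all check out, and the deduction of (iii) and of $a^{\dagger}=(ya)^{*}$ by applying the involution is clean. The standard route in the cited source instead passes through Mary's criterion ($a\in S^{\dagger}$ iff $a^{*}\in S^{\parallel a}$ iff $a\leq_{\mathcal{H}}aa^{*}a$), whose content is precisely that either one of the two ideal conditions implies the other; your direct computation buys independence from that machinery at the cost of a slightly longer verification.
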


The following result shows that the existence of the $a^*$-core inverse of $a$ coincides with the existence of its dual $a^*$-core inverse, which are indeed equivalent to the existence of its Moore-Penrose inverse.

\begin{proposition} \label{* core another} Let $a\in S$. Then the following conditions are equivalent{\rm :}

\emph{(i)} $a\in S_{a^*}^{\tiny\textcircled{\tiny{\#}}}$.

\emph{(ii)} $a\in S^\dag$.

\emph{(iii)} $a\in S_{a^*,\tiny\textcircled{\tiny{\#}}}$.

In this case, $a^\dag=(a_{a^*}^{\tiny\textcircled{\tiny{\#}}}a)^*=(aa_{a^*,\tiny\textcircled{\tiny{\#}}})^*$, $a_{a^*}^{\tiny\textcircled{\tiny{\#}}}=(a^\dag)^*a^\dag$ and $a_{a^*,\tiny\textcircled{\tiny{\#}}}=a^\dag(a^\dag)^*$.
\end{proposition}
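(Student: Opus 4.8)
The plan is to reduce everything to facts already in hand: the characterization of the $a^*$-core inverse obtained by setting $w=a^*$ in Theorem \ref{relate to mary inverse}, the characterization of the dual $a^*$-core inverse obtained by setting $v=a^*$ in Theorem \ref{relate to dual mary inverse}, and Mary's criterion recorded in the introduction that $a\in S^\dag$ if and only if $a^{\parallel a^*}$ exists, with $a^\dag=a^{\parallel a^*}$. Theorem \ref{relate to mary inverse} shows that (i) holds precisely when $(a^*)^{\parallel a}$ and $a^{(1,3)}$ both exist, and Theorem \ref{relate to dual mary inverse} shows that (iii) holds precisely when $(a^*)^{\parallel a}$ and $a^{(1,4)}$ both exist. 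Thus both (i) and (iii) hinge on the existence of $(a^*)^{\parallel a}$, and the crux is to identify this with Moore-Penrose invertibility of $a$.

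To that end I would first invoke the symmetry property recorded just before the dual $w^*$-core proposition, namely $(w^*)^{\parallel a^*}=(w^{\parallel a})^*$; putting $w=a^*$ gives $a^{\parallel a^*}=\bigl((a^*)^{\parallel a}\bigr)^*$, so $(a^*)^{\parallel a}$ exists if and only if $a^{\parallel a^*}$ exists. Combining this with Mary's criterion, $(a^*)^{\parallel a}$ exists if and only if $a\in S^\dag$, and in that case $(a^*)^{\parallel a}=(a^{\parallel a^*})^*=(a^\dag)^*$. This single identity is the one place where any care is needed; everything else is bookkeeping.

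With it in hand the equivalences fall out. If $a\in S^\dag$, then $(a^*)^{\parallel a}$ exists and $a^\dag$ serves at once as a $\{1,3\}$- and as a $\{1,4\}$-inverse, so (i) and (iii) both hold. Conversely, each of (i) and (iii) forces $(a^*)^{\parallel a}$ to exist, whence $a\in S^\dag$, closing the cycle (i) $\Leftrightarrow$ (ii) $\Leftrightarrow$ (iii). For the formulas I would take the representatives $a^{(1,3)}=a^{(1,4)}=a^\dag$ and substitute $(a^*)^{\parallel a}=(a^\dag)^*$ into the expressions of the two theorems, getting $a_{a^*}^{\tiny\textcircled{\tiny{\#}}}=(a^*)^{\parallel a}a^{(1,3)}=(a^\dag)^*a^\dag$ and $a_{a^*,\tiny\textcircled{\tiny{\#}}}=a^{(1,4)}(a^*)^{\parallel a}=a^\dag(a^\dag)^*$, the choice of representative being immaterial since these inverses are unique. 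Finally, $a^\dag=(a_{a^*}^{\tiny\textcircled{\tiny{\#}}}a)^*$ comes for free from the relation $(a^*)^{\parallel a}=a_{a^*}^{\tiny\textcircled{\tiny{\#}}}a$ of Theorem \ref{relate to mary inverse} together with $(a^*)^{\parallel a}=(a^\dag)^*$, while $a^\dag=(aa_{a^*,\tiny\textcircled{\tiny{\#}}})^*$ follows by applying $*$ to $aa_{a^*,\tiny\textcircled{\tiny{\#}}}=aa^\dag(a^\dag)^*$ and using the Penrose identities $(aa^\dag)^*=aa^\dag$ and $a^\dag aa^\dag=a^\dag$. The only genuine obstacle is the symmetry identity $(a^*)^{\parallel a}=(a^\dag)^*$; once that is fixed, the proposition is immediate.
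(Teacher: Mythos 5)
Your proposal is correct and follows essentially the same route as the paper: both arguments pivot on reducing (i) and (iii), via Theorems \ref{relate to mary inverse} and \ref{relate to dual mary inverse}, to the existence of $(a^*)^{\parallel a}$, and on identifying that with Moore--Penrose invertibility through $(a^*)^{\parallel a}=(a^{\parallel a^*})^*=(a^\dag)^*$. The only cosmetic differences are that the paper proves (i) $\Rightarrow$ (ii) directly from the defining equation $a=xaa^*a$ together with Lemma \ref{MP inverse ideal char}, and dismisses the formulas with ``a direct calculation,'' whereas you obtain both by substituting $(a^\dag)^*$ and $a^\dag$ into the representation formulas of the two theorems.
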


\begin{proof} (i) $\Rightarrow$ (ii) Suppose $x\in S$ is the $a^*$-core inverse of $a$. Then $a=xaa^*a\in Saa^*a$. Hence, $a\in S^\dag$ and $a^\dag=(xa)^*$ by Lemma \ref{MP inverse ideal char}.

(ii) $\Rightarrow$ (iii) To show $a\in S_{a^*,\tiny\textcircled{\tiny{\#}}}$, it suffices to prove $(a^*)^{\parallel a}$ (i.e., $a\in S^\dag$, and hence $a\in S^{(1,4)}$). The condition (ii) immediately gives this.

(iii) $\Rightarrow$ (i) From Theorems \ref{relate to mary inverse} and \ref{relate to dual mary inverse}, it is known that $a\in S_{a^*,\tiny\textcircled{\tiny{\#}}}$ if and only if $(a^*)^{\parallel a}$ exists if and only if $(a^*)^{\parallel a}$ and $a^{(1,3)}$ both exist if and only if $a\in S_{a^*}^{\tiny\textcircled{\tiny{\#}}}$.

By a direct calculation, $(a^\dag)^*a^\dag$ is the $a^*$-core inverse of $a$, and $a^\dag(a^\dag)^*$ is the dual $a^*$-core inverse of $a$.
\hfill$\Box$
\end{proof}

For the group inverse, the Drazin inverse and the core inverse, one knows that $(a^\#)^\#=a^2a^\#$, $(a^D)^D=a^2a^D$ and $(a^{\tiny\textcircled{\tiny{\#}}})^{\tiny\textcircled{\tiny{\#}}}=a^2a^{\tiny\textcircled{\tiny{\#}}}$.

It is natural to ask whether the $w$-core inverse also shares a similar property, i.e., whether $w$-core invertible elements are $w$-core invertible. In fact, the answer to this question is negative. See the following example.

\begin{example} {\rm Let $S$ and the involution $*$ be the same as that of the previous Example \ref{Ex1}. Take $a=
\begin{bmatrix}
0 & 1 \\
0 & 0 \\
\end{bmatrix}, w=\begin{bmatrix}
0 & 0 \\
1 & 0 \\
\end{bmatrix}\in S$, then $a_{w}^{\tiny\textcircled{\tiny{\#}}}=
\begin{bmatrix}
1 & 0 \\
0 & 0 \\
\end{bmatrix}$. However, $a_{w}^{\tiny\textcircled{\tiny{\#}}}\notin S_{w}^{\tiny\textcircled{\tiny{\#}}}$ as there exists no $x\in S$ such that $xa_{w}^{\tiny\textcircled{\tiny{\#}}}wa_{w}^{\tiny\textcircled{\tiny{\#}}}=a_{w}^{\tiny\textcircled{\tiny{\#}}}$ since $a_{w}^{\tiny\textcircled{\tiny{\#}}}wa_{w}^{\tiny\textcircled{\tiny{\#}}}=0$.}
\end{example}

It is of interest to study whether $w$-core invertible elements are core invertible in a $*$-semigroup. The following result provides a positive answer.

\begin{theorem} Let $a,w\in S$ and let $a\in S_w^{\tiny\textcircled{\tiny{\#}}}$. Then $a_w^{\tiny\textcircled{\tiny{\#}}}\in S^{\tiny\textcircled{\tiny{\#}}}$ and $(a_w^{\tiny\textcircled{\tiny{\#}}})^{\tiny\textcircled{\tiny{\#}}}=(aw)^2a_w^{\tiny\textcircled{\tiny{\#}}}$.
\end{theorem}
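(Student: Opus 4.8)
The plan is to reduce everything to the classical core inverse of the single element $aw$ and then verify the three defining equations directly. Writing $b=aw$ and $x=a_w^{\tiny\textcircled{\tiny{\#}}}$, the hypothesis $a\in S_w^{\tiny\textcircled{\tiny{\#}}}$ together with Theorem \ref{ideal form} gives $b=aw\in S^{\tiny\textcircled{\tiny{\#}}}$ and, more importantly, the identification $x=(aw)^{\tiny\textcircled{\tiny{\#}}}=b^{\tiny\textcircled{\tiny{\#}}}$. Since $(aw)^2a_w^{\tiny\textcircled{\tiny{\#}}}=b^2x$, the assertion becomes precisely that the core inverse of a core invertible element is again core invertible, with $(b^{\tiny\textcircled{\tiny{\#}}})^{\tiny\textcircled{\tiny{\#}}}=b^2b^{\tiny\textcircled{\tiny{\#}}}$; this is the general formula $(a^{\tiny\textcircled{\tiny{\#}}})^{\tiny\textcircled{\tiny{\#}}}=a^2a^{\tiny\textcircled{\tiny{\#}}}$ recorded just before the statement, so one could simply invoke it. For a self-contained argument I would instead verify the claim by hand.

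Concretely, from $x=b^{\tiny\textcircled{\tiny{\#}}}$ I record the core-inverse relations $bx^2=x$, $xb^2=b$ and $(bx)^*=bx$, and (arguing as in Lemma \ref{added lemma} with $w=1$ and $b$ in place of $a$) also $bxb=b$ and $xbx=x$; a single reduction then gives $b^2x^2=b(bx^2)=bx$. Setting $z=b^2x$, I would check that $z$ is the core inverse of $x$, i.e. that $xz^2=z$, $zx^2=x$ and $(xz)^*=xz$. The self-adjointness is immediate since $xz=xb^2x=(xb^2)x=bx$, which is self-adjoint; the other two follow from short associativity reductions, namely $zx^2=b^2x^3=(b^2x^2)x=bx\cdot x=bx^2=x$ and $xz^2=(xb^2)(xb^2)x=b\cdot b\cdot x=b^2x=z$.

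Finally, these three equations show $x=a_w^{\tiny\textcircled{\tiny{\#}}}\in S^{\tiny\textcircled{\tiny{\#}}}$ and, by the uniqueness of the core inverse (Theorem \ref{uniqueness} with $w=1$), $(a_w^{\tiny\textcircled{\tiny{\#}}})^{\tiny\textcircled{\tiny{\#}}}=z=(aw)^2a_w^{\tiny\textcircled{\tiny{\#}}}$, as required. I do not expect a genuine obstacle here: the entire content is the identification $a_w^{\tiny\textcircled{\tiny{\#}}}=(aw)^{\tiny\textcircled{\tiny{\#}}}$ furnished by Theorem \ref{ideal form}, after which the verification is pure bookkeeping. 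The only point demanding care is noncommutativity, specifically applying $xb^2=b$ and $bx^2=x$ on the correct sides when collapsing the words $b^2x^3$ and $xb^2xb^2x$.
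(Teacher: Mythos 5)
Your proposal is correct and follows the same route as the paper: both reduce the statement via Theorem \ref{ideal form} to the identification $a_w^{\tiny\textcircled{\tiny{\#}}}=(aw)^{\tiny\textcircled{\tiny{\#}}}$ and then apply the formula $(b^{\tiny\textcircled{\tiny{\#}}})^{\tiny\textcircled{\tiny{\#}}}=b^2b^{\tiny\textcircled{\tiny{\#}}}$ for $b=aw$. The only difference is that the paper simply cites this formula as known, while you verify it directly; your verification of the three defining equations for $z=b^2x$ is accurate.
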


\begin{proof} It follows from Theorem \ref{ideal form} (i) $\Rightarrow$ (iii) that $aw\in S^{\tiny\textcircled{\tiny{\#}}}$ and $a_w^{\tiny\textcircled{\tiny{\#}}}=(aw)^{\tiny\textcircled{\tiny{\#}}}$. So, $(a_w^{\tiny\textcircled{\tiny{\#}}})^{\tiny\textcircled{\tiny{\#}}}=((aw)^{\tiny\textcircled{\tiny{\#}}})^{\tiny\textcircled{\tiny{\#}}}
=(aw)^2(aw)^{\tiny\textcircled{\tiny{\#}}}=(aw)^2a_w^{\tiny\textcircled{\tiny{\#}}}$.
\hfill$\Box$
\end{proof}

At the end of this section, we aim to show that our defined $w$-core inverses and dual $v$-core inverses are instances of two classes of outer generalized inverses.

\begin{theorem} \label{wv mary} Let $a,w,v\in S$ and $a\in S^\dag$. Then

\emph{(i)} $a\in S_w^{\tiny\textcircled{\tiny{\#}}}$ if and only if $aw$ is invertible along $aa^*$. In this case, the $w$-core inverse of $a$ coincides with the inverse of $aw$ along $aa^*$.

\emph{(ii)} $a\in S_{v,{\tiny\textcircled{\tiny{\#}}}}$ if and only if $va$ is invertible along $a^*a$. In this case, the dual $v$-core inverse of $a$ coincides with the inverse of $va$ along $a^*a$.
\end{theorem}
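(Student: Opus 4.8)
The plan is to reduce everything to Mary's inverse along $a$ via Theorem~\ref{relate to mary inverse}. Since $a\in S^\dag$ we have $a\in S^{(1,3)}$ (indeed $a^\dag\in a\{1,3\}$), so by Theorem~\ref{relate to mary inverse} the assertion $a\in S_w^{\tiny\textcircled{\tiny{\#}}}$ is equivalent to the single condition $w\in S^{\parallel a}$, and it remains only to match this against $aw\in S^{\parallel aa^*}$. Before doing so I would record the identities forced by $a\in S^\dag$: writing $a^\dag$ for the Moore--Penrose inverse, $aa^\dag$ and $a^\dag a$ are self-adjoint idempotents, and one checks by taking adjoints that $a^\dag aa^*=a^*$, $a^*aa^\dag=a^*$, $(aa^*)(a^\dag)^*=a$ and $(a^\dag)^*a^*=aa^\dag$. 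These in turn give the two Green-relation facts $a\,\mathcal{R}\,aa^*$ (since $aa^*=a\cdot a^*\in aS$ and $a=aa^*(a^\dag)^*\in aa^*S$) and $a^*\,\mathcal{L}\,aa^*$ (since $a^*=a^\dag\cdot aa^*\in Saa^*$). Thus $\leq_\mathcal{R}aa^*$ coincides with $\leq_\mathcal{R}a$ and $\leq_\mathcal{L}aa^*$ coincides with $\leq_\mathcal{L}a^*$.

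For the forward implication I would take $x=a_w^{\tiny\textcircled{\tiny{\#}}}=w^{\parallel a}a^\dag$ (Theorem~\ref{relate to mary inverse}) and verify directly that $x$ is the inverse of $aw$ along $aa^*$. The equation $x(aw)(aa^*)=aa^*$ follows from $w^{\parallel a}a^\dag a=w^{\parallel a}$ (as $w^{\parallel a}\leq_\mathcal{L}a$) together with the defining relation $w^{\parallel a}wa=a$; dually, $(aa^*)(aw)x=aa^*$ follows from $aww^{\parallel a}=a$ and $a^*aa^\dag=a^*$. For $x\leq_\mathcal{H}aa^*$ I would use the two Green-relation facts above: $x\in awS\subseteq aS$ gives $x\leq_\mathcal{R}a$, equivalently $x\leq_\mathcal{R}aa^*$, while $x=xawx=x(awx)^*=xx^*w^*a^*\in Sa^*$ (using Lemma~\ref{added lemma} and $(awx)^*=awx$) gives $x\leq_\mathcal{L}a^*$, equivalently $x\leq_\mathcal{L}aa^*$. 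Hence $aw\in S^{\parallel aa^*}$ with $(aw)^{\parallel aa^*}=a_w^{\tiny\textcircled{\tiny{\#}}}$, which already settles the coincidence claim.

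For the converse I would start from $b=(aw)^{\parallel aa^*}$, so that $b(aw)(aa^*)=aa^*$, $(aa^*)(aw)b=aa^*$ and $b\leq_\mathcal{H}aa^*$, and prove that $g:=ba$ is the inverse of $w$ along $a$. Post-multiplying the first equation by $(a^\dag)^*$ and using $(aa^*)(a^\dag)^*=a$ yields $bawa=a$, i.e. $gwa=a$. For the other absorption equation I would left-multiply the second equation by $a^\dag$ to get $a^*awb=a^*$, then right-multiply by $a$ and left-multiply by $(a^\dag)^*$, using $(a^\dag)^*a^*=aa^\dag$ and $aa^\dag aw=aw$, to obtain $awba=a$, i.e. $awg=a$. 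Finally $g=ba\in Sa$ gives $g\leq_\mathcal{L}a$, while $b\in aa^*S^1\subseteq aS^1$ gives $g=ba\in aS$, hence $g\leq_\mathcal{R}a$; so $g\leq_\mathcal{H}a$. Thus $w^{\parallel a}=ba$ exists, and Theorem~\ref{relate to mary inverse} gives $a\in S_w^{\tiny\textcircled{\tiny{\#}}}$; uniqueness of the inverse along $aa^*$ forces $b=a_w^{\tiny\textcircled{\tiny{\#}}}$. I expect this converse, and in particular getting the order of the $a^\dag$-multiplications right so that the $a^*$-factors cancel cleanly, to be the only delicate point; the rest is bookkeeping with the identities collected at the outset.

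Statement (ii) I would not prove from scratch but deduce from (i) by $*$-duality. Since $a\in S^\dag$ iff $a^*\in S^\dag$, I would apply part (i) to $a^*$ with the element $v^*$ in place of $w$: this reads $a^*\in S_{v^*}^{\tiny\textcircled{\tiny{\#}}}$ iff $a^*v^*$ is invertible along $a^*(a^*)^*=a^*a$. Now $a$ is dual $v$-core invertible iff $a^*$ is $v^*$-core invertible (the $*$-duality established above), and $a^*v^*=(va)^*$ is invertible along $a^*a$ iff $va$ is invertible along $(a^*a)^*=a^*a$ (by the observation that $c\in S^{\parallel d}$ iff $c^*\in S^{\parallel d^*}$, with $(c^{\parallel d})^*=(c^*)^{\parallel d^*}$). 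Chaining these equivalences gives (ii), and taking adjoints in the coincidence statement of (i) yields $a_{v,\tiny\textcircled{\tiny{\#}}}=(va)^{\parallel a^*a}$.
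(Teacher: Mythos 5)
Your argument is correct, and it verifies the same identities the paper needs, but the converse of (i) and part (ii) are organized differently. In the forward direction you and the paper do essentially the same thing: check the defining conditions of the inverse of $aw$ along $aa^*$ for the $w$-core inverse $x$ (the paper works from the five equations of Theorem \ref{characteristic ew}, e.g.\ $xawaa^*=aa^*=(awxaa^*)^*=aa^*awx$ and $x=d(a^\dag)^*wx^2\in dS$, $x=x(wx)^*a^\dag d\in Sd$ with $d=aa^*$; you work from the formula $x=w^{\parallel a}a^\dag$ and the Green-relations $a\,\mathcal{R}\,aa^*$, $a^*\,\mathcal{L}\,aa^*$ — both are fine). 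For the converse the paper takes $x=(aw)^{\parallel aa^*}$ and verifies the three defining equations of the $w$-core inverse head-on (first showing $awx=aa^\dag$, then $xawa=a$ from $a\in aa^*S$, then $awx^2=x$ from $x\in aa^*S$), whereas you instead show that $g=(aw)^{\parallel aa^*}a$ is the inverse of $w$ along $a$ and then invoke Theorem \ref{relate to mary inverse} together with uniqueness of the inverse along $aa^*$ to recover the coincidence. Your route leans more heavily on the existence criterion and keeps the computations at the level of absorption identities for $a^\dag$; the paper's route is more self-contained in that it produces the $w$-core inverse explicitly without passing back through $w^{\parallel a}$. For (ii) the paper simply says the proof is similar, while you derive it cleanly from (i) by the $*$-duality $a\in S_{v,\tiny\textcircled{\tiny{\#}}}\Leftrightarrow a^*\in S_{v^*}^{\tiny\textcircled{\tiny{\#}}}$ already established in the paper, which is arguably the tidier way to dispose of the dual statement. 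All the individual identities you use (for instance $a^\dag aa^*=a^*$, $(aa^*)(a^\dag)^*=a$, $(a^\dag)^*a^*=aa^\dag$) check out, so there is no gap.
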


\begin{proof} (i) Suppose $a\in S_w^{\tiny\textcircled{\tiny{\#}}}$ and $x$ is the $w$-core inverse of $a$. Then, $awxa=a$, $xawx=x$, $xawa=a$, $awx^2=x$ and $awx=(awx)^*$ by Theorem \ref{characteristic ew}. We now show that $x=a_w^{\tiny\textcircled{\tiny{\#}}}$ is the inverse of $aw$ along $d=aa^*$. Indeed,

(1) $xawd=xawaa^*=aa^*=d=(aa^*)^*=(awxaa^*)^*=aa^*(awx)^*=aa^*awx=dawx$.

(2) $x=awx^2=a(a^\dag a)^*wx^2=aa^*(a^\dag)^*wx^2=d(a^\dag)^*wx^2 \in dS$.

(3) $x=xawx=x(awx)^*=x(wx)^*a^*=x(wx)^*a^\dag aa^*=x(wx)^*a^\dag d\in Sd$.

So, $x=a_w^{\tiny\textcircled{\tiny{\#}}}$ is the inverse of $aw$ along $aa^*$.

Conversely, let $x=(aw)^{\parallel aa^*}$ be the inverse of $aw$ along $aa^*$. Then $xawaa^*=aa^*=aa^*awx$ and $x\in aa^*S \cap Saa^*$. We next show that $x$ is the $w$-core inverse of $a$.

As $a\in S^\dag$, then $awx=(aa^\dag)^* awx=(a^\dag)^*a^*awx=(a^\dag aa^\dag)^*a^*awx=(a^\dag)^*a^\dag aa^*awx=(a^\dag)^*a^\dag aa^*=(a^\dag)^*a^*=aa^\dag$. This means $awx=(awx)^*$ and $awxa=a$.

Since $xawaa^*=aa^*$, we have $xawa=a$ by the implication $a\in S^\dag \Rightarrow a\in aa^*S$.

It follows from $x\in aa^*S$ that $x=aa^*s$ for some $s\in S$, and consequently $awx^2=awx(aa^*s)=(awxa)a^*s=aa^*s=x$.

So, $x$ is the $w$-core inverse of $a$.

(ii) can be proved similarly.
\hfill$\Box$
\end{proof}

In terms of Proposition \ref{core another} and Theorem \ref{wv mary}, we have the following corollary, among them, (i) and (ii) were essentially given in \cite[Theorem 4.3]{Rakic2014}.

\begin{corollary} \label{Rakic theorem 4.3} Let $a\in S^\dag$. Then the following conditions are equivalent{\rm :}

\emph{(i)} $a\in S^{\tiny\textcircled{\tiny{\#}}}$.

\emph{(ii)} $a$ is invertible along $aa^*$.

\emph{(iii)} $a\in S_a^{\tiny\textcircled{\tiny{\#}}}$.

\emph{(iv)} $a^2$ is invertible along $aa^*$.

In this case, $a^{\tiny\textcircled{\tiny{\#}}}=a^{\parallel aa^*}=aa_a^{\tiny\textcircled{\tiny{\#}}}=a(a^2)^{\parallel aa^*}$.
\end{corollary}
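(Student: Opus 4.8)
The plan is to obtain this corollary as a direct specialization of Proposition \ref{core another} and Theorem \ref{wv mary}, reading off the four equivalences and the displayed formula by choosing $w=1$ and $w=a$ in the latter. Since the hypothesis $a\in S^\dag$ is precisely what Theorem \ref{wv mary} requires, that theorem applies verbatim, and no separate verification of Moore--Penrose invertibility is needed.

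First I would establish (i) $\Leftrightarrow$ (ii). Applying Theorem \ref{wv mary}(i) with $w=1$, and recalling the earlier observation that the $1$-core inverse is just the classical core inverse (so that $S_1^{\tiny\textcircled{\tiny{\#}}}=S^{\tiny\textcircled{\tiny{\#}}}$), I get that $aw=a\cdot 1=a$ is invertible along $aa^*$ if and only if $a\in S^{\tiny\textcircled{\tiny{\#}}}$. The coincidence clause of that theorem then identifies the inverse, yielding $a^{\tiny\textcircled{\tiny{\#}}}=a^{\parallel aa^*}$. Next I would establish (iii) $\Leftrightarrow$ (iv) by the same theorem with $w=a$: now $aw=a^2$, so $a\in S_a^{\tiny\textcircled{\tiny{\#}}}$ is equivalent to $a^2$ being invertible along $aa^*$, and the coincidence clause gives $a_a^{\tiny\textcircled{\tiny{\#}}}=(a^2)^{\parallel aa^*}$.

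The remaining link (i) $\Leftrightarrow$ (iii) is already supplied by Proposition \ref{core another}, which in addition records $a^{\tiny\textcircled{\tiny{\#}}}=aa_a^{\tiny\textcircled{\tiny{\#}}}$. Chaining the three equivalences closes the cycle among (i)--(iv). To assemble the displayed formula I would simply concatenate the three identities obtained above: the equality $a^{\tiny\textcircled{\tiny{\#}}}=a^{\parallel aa^*}$ from the $w=1$ case, the equality $a^{\tiny\textcircled{\tiny{\#}}}=aa_a^{\tiny\textcircled{\tiny{\#}}}$ from Proposition \ref{core another}, and the substitution $aa_a^{\tiny\textcircled{\tiny{\#}}}=a(a^2)^{\parallel aa^*}$ coming from the $w=a$ identification, giving $a^{\tiny\textcircled{\tiny{\#}}}=a^{\parallel aa^*}=aa_a^{\tiny\textcircled{\tiny{\#}}}=a(a^2)^{\parallel aa^*}$.

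There is no genuine analytic obstacle here, since all the substantive content lives in the two cited results; the only care required is bookkeeping. Specifically, I would double-check that the specialization $w=1$ really recovers the core inverse (so that the conclusion of Theorem \ref{wv mary} is a statement about $a^{\tiny\textcircled{\tiny{\#}}}$ rather than about some other $w$-core inverse), and that the various ``inverse along $aa^*$'' objects appearing in the coincidence clauses are tracked consistently, so that they fit together into the single formula chain rather than producing mismatched expressions.
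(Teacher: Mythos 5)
Your proposal is correct and follows exactly the route the paper intends: the paper states this corollary without a written proof, citing precisely Proposition \ref{core another} and Theorem \ref{wv mary}, and your specializations $w=1$ and $w=a$ in Theorem \ref{wv mary}(i), combined with the equivalence (i) $\Leftrightarrow$ (iii) and the identity $a^{\tiny\textcircled{\tiny{\#}}}=aa_a^{\tiny\textcircled{\tiny{\#}}}$ from Proposition \ref{core another}, assemble the equivalences and the formula chain exactly as the authors envisage. No gaps.
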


\begin{remark} \label{referee add} {\rm One knows from Theorem \ref{ideal form} (i) $\Rightarrow$ (iii) if $a\in S_w^{\tiny\textcircled{\tiny{\#}}}$ then $aw\in S^{\tiny\textcircled{\tiny{\#}}}$ and $a_w^{\tiny\textcircled{\tiny{\#}}}=(aw)^{\tiny\textcircled{\tiny{\#}}}$. It gives $(aw)\in S^{\parallel aw(aw)^*}$ and  $(aw)^{\tiny\textcircled{\tiny{\#}}}=(aw)^{\parallel aw(aw)^*}$ by Corollary \ref{Rakic theorem 4.3}, provided that $aw\in S^\dag$. Herein it follows from Theorem \ref{wv mary} that $a\in S_w^{\tiny\textcircled{\tiny{\#}}}$ gives $aw\in S^{\parallel aa^*}$ and $a_w^{\tiny\textcircled{\tiny{\#}}}=(aw)^{\parallel aa^*}$ provided that $a\in S^\dag$. Note that the two inverses of $aw$ along $aw(aw)^*$ and $aa^*$ coincide. We then claim that $aw(aw)^*$ and $aa^*$ belong to the same $\mathcal{H}$-class, and thus $aa^*S=aw(aw)^*S$. It follows from \cite{Mary2011} that if $a\in S^\dag$ then $aa^*$ is a trace product and $a\in aa^*S$. So, we also get $a\in aw(aw)^*S$.}
\end{remark}

Given any $a,b,c\in S$, an element $a$ is $(b,c)$-invertible \cite{Drazin2012} if there exists some $y\in S$ such that $yab=b$, $cay=c$ and $y\in bSy \cap ySc$, or equivalently, $yay=y$, $yS=bS$ and $Sy=Sc$. Such an $y$ is called a $(b,c)$-inverse of $a$.

The connections about $w$-core inverses and $(b,c)$-inverses can be given as follows.

\begin{theorem}  \label{relations b,c-inverses} Let $a,w,v\in S$. Then

\emph{(i)} $a\in S_w^{\tiny\textcircled{\tiny{\#}}}$ if and only if $aw$ is $(a,a^*)$-invertible. In this case, the $w$-core inverse of $a$ coincides with the $(a,a^*)$-inverse of $aw$.

\emph{(ii)} $a\in S_{v,\tiny\textcircled{\tiny{\#}}}$ if and only if $va$ is $(a^*,a)$-invertible. In this case, the dual $v$-core inverse of $a$ coincides with the $(a^*,a)$-inverse of $va$.
\end{theorem}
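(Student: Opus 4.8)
The plan is to prove (i) by matching the defining equations of the $w$-core inverse against the two equivalent formulations of the $(a,a^*)$-inverse of the element $aw$, and then to obtain (ii) by the mirror argument using the dual characterization. Recall that $y$ is an $(a,a^*)$-inverse of $aw$ precisely when $y(aw)y=y$, $yS=aS$ and $Sy=Sa^*$, or equivalently when $y(aw)a=a$, $a^*(aw)y=a^*$ and $y\in aSy\cap ySa^*$. I will compare these against Theorem \ref{characteristic ew}, whose item (iii) reads: there is some $x$ with $awxa=a$, $xS=aS$ and $Sx=Sa^*$. Since both the $w$-core inverse (Theorem \ref{uniqueness}) and the $(a,a^*)$-inverse of $aw$ are unique when they exist, it suffices to produce the same element on both sides.

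For the forward implication, suppose $x$ is the $w$-core inverse of $a$. Then $x$ satisfies all five equations of Theorem \ref{characteristic ew}(ii) by Lemma \ref{added lemma}, so the construction in the implication (ii)$\Rightarrow$(iii) applied to this very $x$ gives $xS=aS$ and $Sx=Sa^*$, while Lemma \ref{added lemma} also gives $xawx=x$. These three facts are exactly the second formulation above for the element $aw$, so $x$ is the $(a,a^*)$-inverse of $aw$.

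For the converse, let $y$ be the $(a,a^*)$-inverse of $aw$. From the first formulation, $y\in aSy\subseteq aS$ and $a=y(awa)\in yS$ give $yS=aS$; symmetrically $Sy=Sa^*$. The one substantive step is to upgrade $a^*(aw)y=a^*$ to $awya=a$: applying the involution to $a^*awy=a^*$ yields $a=y^*w^*a^*a\in Sa^*a$, so $a\in S^{(1,3)}$ and $wy$ is a $\{1,3\}$-inverse of $a$; in particular $awya=a$ and $awy$ is self-adjoint. Feeding $awya=a$, $yS=aS$ and $Sy=Sa^*$ into Theorem \ref{characteristic ew}(iii) shows $a\in S_w^{\tiny\textcircled{\tiny{\#}}}$, and uniqueness identifies its $w$-core inverse with $y$. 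This closes (i).

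Part (ii) is handled by the parallel argument, now invoking Theorem \ref{characteristic vf}(iii), which asks for $ayva=a$, $yS=a^*S$ and $Sy=Sa$. The $(a^*,a)$-inverse $y$ of $va$ satisfies $y(va)y=y$, $yS=a^*S$, $Sy=Sa$, and in the first formulation $yvaa^*=a^*$ and $avay=a$; the ideal equalities follow just as above. The key step is dual to before: applying the involution to $yvaa^*=a^*$ gives $a=aa^*v^*y^*\in aa^*S$, so $a\in S^{(1,4)}$ with $yv\in a\{1,4\}$, whence $ayva=a$ and $yva$ is self-adjoint, and Theorem \ref{characteristic vf}(iii) applies. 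I expect the only genuine obstacle to be this involution step producing the $\{1,3\}$-inverse (respectively the $\{1,4\}$-inverse): everything else is a direct translation between the ideal conditions $yS=aS$, $Sy=Sa^*$ and the membership conditions in the two definitions of the $(b,c)$-inverse. One must also keep straight which of $a$, $a^*$ serves as $b$ and which as $c$, since interchanging their roles is exactly what separates the $w$-core case from the dual $v$-core case.
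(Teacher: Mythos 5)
Your proof is correct and follows essentially the same route as the paper's: the forward direction matches word for word (via Lemma \ref{added lemma} and Theorem \ref{characteristic ew}), and in the converse both arguments extract $awya=a$ and the self-adjointness of $awy$ from $a^*awy=a^*$ — you by applying the involution and reading off a $\{1,3\}$-inverse, the paper by pre-multiplying by $(wy)^*$ — before using $yS=aS$ to conclude. The only cosmetic difference is that you finish by citing Theorem \ref{characteristic ew}(iii) while the paper verifies the three defining equations directly; both are valid.
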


\begin{proof}

We only prove the statement (i) as the statement (ii) can be proved similarly.

Suppose that $a\in S_w^{\tiny\textcircled{\tiny{\#}}}$ and $x$ is the $w$-core inverse of $a$. Then, by Theorem \ref{characteristic ew}, we have $xawx=x$, $xS=aS$ and $Sx=Sa^*$. So, $x$ is the $(a,a^*)$-inverse of $aw$.

Conversely, suppose that $aw$ is $(a,a^*)$-invertible and $x$ is the $(a,a^*)$-inverse of $aw$. Then, $xawa=a$, $a^*awx=a^*$, $xS= aS$ and $Sx=Sa^*$.

Pre-multiplying $a^*awx=a^*$ by $(wx)^*$ gives $(awx)^*awx=(wx)^*a^*awx=(wx)^*a^*=(awx)^*$ and $awx=(awx)^*$, so that $a^*awx=a^* \Rightarrow a^*=a^*(awx)^*=(awxa)^*$, i.e., $a=awxa$. As $xS= aS$, then there is some $t\in S$ such that $x=at=(awxa)t=awx(at)=awx^2$.

Therefore, $x$ is the $w$-core inverse of $a$.
\hfill$\Box$
\end{proof}

\begin{remark} {\rm Theorem \ref{relations b,c-inverses} also works for the right hybrid $(b,c)$-inverse of $a$ \cite{Drazin2021} instead of its $(b,c)$-inverse. A list of criteria for right hybrid $(b,c)$-inverses can be found in \cite{Drazin2012,Zhu20180,Zhu2021}.}
\end{remark}

\section{Characterizations of $w$-core inverses and dual $v$-core inverses by units in a $*$-ring}

In this section, we assume that $R$ is a unital $*$-ring, and we mainly derive the existence criteria of $w$-core invertible and dual $v$-core invertible elements by units in $R$.

An element $p\in R$ is called a Hermitian element if $p^*=p$. If in addition, $p=p^2$, then $p$ is called a projection. By $R^{-1}$ we denote the group of all invertible elements in $R$. The classical generalized inverses, including the group inverse, the Moore-Penrose inverse and the core inverse are characterized as follows, respectively. (i) $a\in R^\#$ if and only if there exists an idempotent $e\in R$ such that $ea=ae=0$ and $a+e\in R^{-1}$ (see \cite[Proposition 8.24]{Rao2002}). (ii) $a\in R^\dag$ if and only if there exists a projection $p\in R$ such that $pa=0$ and $aa^*+p\in R^{-1}$ (see \cite[Theorem 1.2]{Patricio2010}). (iii) $a\in R^{\tiny\textcircled{\tiny{\#}}}$ if and only if there exists a unique projection $q\in R$ such that $qa=0$ and $a^n+q\in R^{-1}$ for any integer $n\geq 1$ (see \cite[Theorems 3.3 and 3.4]{Li2018}). Inspired by these, we aim to give the characterization and the representation of the $w$-core inverse by projections and units.

\begin{theorem}\label{idempotent} Let $a,w\in R$. The following conditions are equivalent{\rm:}

\emph{(i)} $a\in R_w^{\tiny\textcircled{\tiny{\#}}}$.

\emph{(ii)} There exists a unique projection $p\in R$ such that $pa=0$ and $u=p+aw\in R^{-1}$.

\emph{(iii)} There exists a projection $p\in R$ such that $pa=0$ and $u=p+aw\in R^{-1}$.

In this case, $a_w^{\tiny\textcircled{\tiny{\#}}}=u^{-1}(1-p)$.
\end{theorem}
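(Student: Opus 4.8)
The plan is to pivot on the projection attached to a $w$-core inverse. Fix $x=a_w^{\tiny\textcircled{\tiny{\#}}}$ (when it exists) and set $e:=awx$ and $p:=1-e$. I first record the elementary algebra of $e$: from the defining identity $awx^2=x$, the two consequences $awxa=a$ and $xawx=x$ of Lemma \ref{added lemma}, and the identity $xawaw=aw$ obtained by right-multiplying $xawa=a$ by $w$, one checks directly that $e$ is a projection (it is Hermitian by definition, and $e^2=aw(xawx)=awx=e$) and that $ea=a$, $ex=x$, $xe=x$, $e(aw)=aw$, $x(aw)^2=aw$. Consequently $p$ is a projection and $pa=(1-awx)a=a-awxa=0$, so the annihilation requirement in (ii) and (iii) is immediate.

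The heart of the matter, and the step I expect to be the main obstacle, is the invertibility of $u=p+aw$ in the direction (i)$\Rightarrow$(ii). I would settle it by exhibiting the inverse outright, taking $v:=1+x-xaw$. A direct expansion using only the relations above gives $uv=vu=1$; for example $ux=(1-e+aw)x=x-x+awx=e$, whence $u(xaw)=e(aw)=aw$ and so $uv=u+e-aw=1$, and symmetrically $vu=1$. (Conceptually, in the Peirce decomposition determined by $e$ the element $u$ is upper triangular with diagonal corners $e(aw)e$ and $1-e$; the former is invertible in $eRe$ with inverse $x$, which is precisely what $e(aw)x=e$ and $x\,e(aw)=e$ assert.) Left-multiplying $ux=e=1-p$ by $u^{-1}$ yields $u^{-1}(1-p)=x=a_w^{\tiny\textcircled{\tiny{\#}}}$, giving the displayed formula. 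This proves (i)$\Rightarrow$(ii), and (ii)$\Rightarrow$(iii) is trivial.

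For (iii)$\Rightarrow$(i) I would reverse the computation, starting from an arbitrary projection $p$ with $pa=0$ and $u=p+aw\in R^{-1}$, and defining $x:=u^{-1}(1-p)$. The pivotal observation is that $pu=p^2+paw=p+(pa)w=p$, hence $pu^{-1}=p$, which forces $px=pu^{-1}(1-p)=p(1-p)=0$. Since $ux=1-p$ by definition of $x$ and $ux=px+awx$, the vanishing of $px$ gives $awx=1-p$; this is a projection, so $(awx)^*=(1-p)^*=1-p=awx$. Next $awx^2=(1-p)x=x-px=x$, and, using $(1-p)aw=aw$ (because $paw=0$) together with $ua=pa+awa=awa$, one gets $xawa=u^{-1}(1-p)awa=u^{-1}(awa)=u^{-1}ua=a$. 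These are exactly the three defining identities, so $a\in R_w^{\tiny\textcircled{\tiny{\#}}}$ with $a_w^{\tiny\textcircled{\tiny{\#}}}=x=u^{-1}(1-p)$.

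Finally, for the uniqueness clause of (ii): the previous paragraph shows that \emph{every} admissible projection $p$ produces, via $x=u^{-1}(1-p)$, a $w$-core inverse of $a$ satisfying $awx=1-p$. By Theorem \ref{uniqueness} the $w$-core inverse is unique, so $x=a_w^{\tiny\textcircled{\tiny{\#}}}$ is determined by $a$ and $w$ alone, and therefore $p=1-aw\,a_w^{\tiny\textcircled{\tiny{\#}}}$ is forced. Thus the implications close into the cycle (i)$\Rightarrow$(ii)$\Rightarrow$(iii)$\Rightarrow$(i), and the projection in (ii) is unique.
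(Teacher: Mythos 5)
Your proof is correct, and it takes a genuinely different (and more self-contained) route than the paper's. The paper proves (i)$\Rightarrow$(ii) by passing through Theorem \ref{ideal form} to get $aw\in R^{\tiny\textcircled{\tiny{\#}}}$ and $a\in awR$, and then invoking the known projection-and-unit characterization of the core inverse of $aw$ from Li and Chen (\cite[Theorems 3.3 and 3.4]{Li2018}), from which both the existence and the uniqueness of $p$ are imported; likewise its (iii)$\Rightarrow$(i) first argues $aw\in R^{\tiny\textcircled{\tiny{\#}}}$ and $a\in awR$ before verifying the formula. You instead exhibit the inverse of $u=1-awx+aw$ explicitly as $v=1+x-xaw$ and check $uv=vu=1$ from the identities $awx^2=x$, $xawx=x$, $awxaw=aw$ and $x(aw)^2=aw$ (all of which you correctly reduce to Definition \ref{ew} and Lemma \ref{added lemma}), and you obtain uniqueness of $p$ from Theorem \ref{uniqueness} rather than from an external citation. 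Your (iii)$\Rightarrow$(i) is essentially the same direct verification the paper performs when it checks $a_w^{\tiny\textcircled{\tiny{\#}}}=u^{-1}(1-p)$, but you get the full implication from it without the detour through the core invertibility of $aw$. What your approach buys is independence from \cite{Li2018} and from Theorem \ref{ideal form}, plus the explicit formula $u^{-1}=1+a_w^{\tiny\textcircled{\tiny{\#}}}-a_w^{\tiny\textcircled{\tiny{\#}}}aw$, which the paper does not record; what the paper's approach buys is brevity and the conceptual point that the statement is really the Li--Chen criterion for $aw$ transported along $a\in awR$. All of your computations check out, including the Peirce-decomposition remark ($ (1-e)(aw)=0 $ so $u$ is indeed block upper triangular with corners $e(aw)e$, inverted by $x$ in $eRe$, and $1-e$).
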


\begin{proof}
(i) $\Rightarrow$ (ii) Given $a\in R_w^{\tiny\textcircled{\tiny{\#}}}$, then, by Theorem \ref{ideal form}, $aw\in R^{\tiny\textcircled{\tiny{\#}}}$ and $a\in awR$. So, there exists a unique projection $p\in R$ such that $paw=0$ and $aw+p\in R^{-1}$. Since $a\in awR$, the equality $paw=0$ implies $pa=0$.

(ii) $\Rightarrow$ (iii) is obvious.

(iii) $\Rightarrow$ (i) Given (iii), then $aw\in R^{\tiny\textcircled{\tiny{\#}}}$, i.e., $aw\in R^\#\cap R^{(1,3)}$, so that $aw\in (aw)^2R$. Note that the implication $(p+aw)a=awa \Rightarrow a=(p+aw)^{-1}awa$. Then we have $a\in (p+aw)^{-1}((aw)^2R)a=awRa \subseteq awR$. Applying Theorem \ref{ideal form} (iii) $\Rightarrow$ (i), $a\in R_w^{\tiny\textcircled{\tiny{\#}}}$.

We next show that $x=u^{-1}(1-p)=(p+aw)^{-1}(1-p)$ is the $w$-core inverse of $a$. Indeed, we have

(1) $xawa=(p+aw)^{-1}(1-p)awa=(p+aw)^{-1}awa=a$.

(2) Since $(1-p)(p+aw)=aw$ and $p+aw \in R^{-1}$, we have $1-p=aw(p+aw)^{-1}$ and $awx=aw(p+aw)^{-1}(1-p)=(1-p)^2=1-p=(awx)^*$.

(3) As $p(p+aw)=p$, then $p=p(p+aw)^{-1}$ and hence $(1-p)(p+aw)^{-1}=(p+aw)^{-1}-p$, so that $awx^2=(1-p)(p+aw)^{-1}(1-p)=((p+aw)^{-1}-p)(1-p)=(p+aw)^{-1}(1-p)=x$.
\hfill$\Box$
\end{proof}

To make an approach to our next results, we begin with several auxiliary lemmas.

\begin{lemma} \label{jacobson} For any $a,b\in R$, if $\alpha=1-ab$ is invertible, then so is $\beta=1-ba$. Moreover, $\beta^{-1}=1+b\alpha^{-1}a$.
\end{lemma}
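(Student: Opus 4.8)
The plan is to verify directly that the proposed formula gives a two-sided inverse of $\beta = 1 - ba$; this is the classical Jacobson's lemma, so I expect no genuine obstacle beyond bookkeeping in the non-commutative setting. Writing $\alpha = 1 - ab$, the two facts I will exploit repeatedly are $\alpha^{-1}\alpha = 1$ and $\alpha\alpha^{-1} = 1$, that is, $\alpha^{-1}(1 - ab) = 1$ and $(1 - ab)\alpha^{-1} = 1$. Everything reduces to applying these at the right moment.

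First I would fix $\beta^{-1} := 1 + b\alpha^{-1}a$ as a candidate and compute $\beta(1 + b\alpha^{-1}a)$. Expanding yields $1 - ba + b\alpha^{-1}a - bab\alpha^{-1}a$. The aim is to collapse the last three terms: grouping the middle factor as $b[\alpha^{-1} - ab\alpha^{-1}]a = b[(1 - ab)\alpha^{-1}]a$ and invoking $(1-ab)\alpha^{-1} = 1$ turns it into $ba$, leaving $1 - ba + ba = 1$.

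Next I would carry out the symmetric computation for $(1 + b\alpha^{-1}a)\beta$. Expansion gives $1 - ba + b\alpha^{-1}a - b\alpha^{-1}aba$, and here the useful grouping is $b\alpha^{-1}[a - aba] = b[\alpha^{-1}(1 - ab)]a$; applying $\alpha^{-1}(1-ab) = 1$ reduces this to $ba$, so the product is again $1$. Since both one-sided products equal $1$, the element $\beta$ is a unit with $\beta^{-1} = 1 + b\alpha^{-1}a$, as claimed.

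The only point requiring care is the side on which $ab$ is factored out of $\alpha^{-1}$: because $R$ need not be commutative, one must use $(1-ab)\alpha^{-1} = 1$ when reducing $\beta\beta^{-1}$ and the opposite identity $\alpha^{-1}(1-ab) = 1$ when reducing $\beta^{-1}\beta$. No involution, positivity, or further ring-theoretic hypotheses beyond associativity and the existence of $\alpha^{-1}$ enter the argument.
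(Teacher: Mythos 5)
Your verification is correct: both products $\beta(1+b\alpha^{-1}a)$ and $(1+b\alpha^{-1}a)\beta$ collapse to $1$ exactly as you compute, using $(1-ab)\alpha^{-1}=1$ on one side and $\alpha^{-1}(1-ab)=1$ on the other. The paper itself gives no proof of this lemma, citing it as the well-known Jacobson's Lemma, and your direct two-sided check is precisely the standard argument one would supply.
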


The formula above in Lemma \ref{jacobson} is well known as Jacobson's Lemma. Two elements $\alpha=1-ab$ and $\beta=1-ba$ are said to form a Jacobson pair \cite{Lam2014}. More results on Jacobson pairs can be referred to \cite{Lam2014,Lam2013}.

Recall that an element $a\in R$ is (von Neumann) regular if there exists an $x\in R$ such that $a=axa$. Such an $x$ is called an inner inverse or a $\{1\}$-inverse of $a$, and is denoted by $a^-$. We herein remind the reader that $a\in R^{\parallel d}$ implies that $d$ is regular. As usual, by $a\{1\}$ we denote the set of all $\{1\}$-inverses of $a$.

\begin{lemma} \label{Mary inverse}{\rm \cite[Theorem 3.2]{Mary2013}} Let $d\in R$ be regular with $d^{-}\in d{\{1\}}$. The following conditions are equivalent{\rm:}

\emph{(i)} $a$ is invertible along $d$.

\emph{(ii)} $u=da+1-dd^{-} \in R^{-1}$.

\emph{(iii)} $v=ad+1-d^{-}d \in R^{-1}$.

In this case, $a^{\parallel d}=u^{-1}d=dv^{-1}$.
\end{lemma}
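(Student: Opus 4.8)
The plan is to prove the two equivalences (i) $\Leftrightarrow$ (ii) and (i) $\Leftrightarrow$ (iii) directly, reading off the formula $a^{\parallel d}=u^{-1}d=dv^{-1}$ along the way; the equivalence (ii) $\Leftrightarrow$ (iii) is then free. Throughout I would exploit that regularity of $d$ produces two idempotents $e=dd^{-}$ and $f_{0}=d^{-}d$ with $ed=d$ and $df_{0}=d$, hence $(1-e)d=0$ and $d(1-f_{0})=0$. These are exactly the ``corner'' idempotents that let one pass between the one-sided ideal conditions hidden in $b\leq_{\mathcal{H}}d$ and the invertibility of $u$ and $v$. Since (iii) is the left--right mirror of (ii) (with the roles of $e$ and $f_{0}$ interchanged), I would give full details only for (i) $\Leftrightarrow$ (ii) and then invoke symmetry.

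For (ii) $\Rightarrow$ (i), assume $u=da+1-e\in R^{-1}$ and set $b=u^{-1}d$. A one-line computation gives $ud=dad$ (the summand $d-ed$ cancels since $ed=d$), so $d=u^{-1}dad=b\,ad$, i.e. $bad=d$. Next I would verify $(1-e)u=1-e$ from $(1-e)d=0$ and $e^{2}=e$; multiplying by $u^{-1}$ transfers this to $(1-e)u^{-1}=1-e$, whence $(1-e)b=(1-e)u^{-1}d=(1-e)d=0$, so $b=eb\in dR$. Together with $b=u^{-1}d\in Rd$ this yields $b\leq_{\mathcal{H}}d$. Finally $dab=d$ follows from $ub=d$ and $eb=b$ via $dab=(u-1+e)b=ub-b+eb=d$. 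Hence $b=a^{\parallel d}$ and $a^{\parallel d}=u^{-1}d$.

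For (i) $\Rightarrow$ (ii), write $b=a^{\parallel d}$ and first record the ``outer-inverse'' facts I would use: from $b\in Rd$ and $dab=d$ one gets $bab=b$, whence $ab$ and $ba$ are idempotent; from $b\in dR$ one gets $eb=b$, and from $b\in Rd$ one gets $bd^{-}d=b$; finally $bad=d$ gives $bae=e$, and (with $eb=b$) $e\,ba=ba$; moreover $uba=(ub)a=da$, so $u(1-ba)=1-e$. The key step is then to exhibit the explicit inverse $c=bd^{-}+1-ba$ and check $uc=1$ and $cu=1$. The right identity is immediate: $uc=ub\,d^{-}+u(1-ba)=dd^{-}+(1-e)=1$. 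The left identity $cu=1$ is the genuinely computational point, and I expect it to be the main obstacle, precisely because $d^{-}$ is only an inner (not reflexive) inverse of $d$, so terms like $bd^{-}e=bd^{-}dd^{-}$ must be collapsed by hand; here $bd^{-}d=b$, $bad=d$ and $bae=e$ are exactly what force every cross term to cancel, leaving $cu=ba+da+(1-e)-da-ba+e=1$. Invertibility of $u$ follows, and $u^{-1}d=b$ recovers the stated formula.

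The equivalence (i) $\Leftrightarrow$ (iii), together with $a^{\parallel d}=dv^{-1}$, I would obtain by the same argument applied to $v=ad+1-f_{0}$, with $f_{0}=d^{-}d$ playing the role of $e$ and the mirrored inverse $d^{-}b+1-ab$; the computations are the left--right transposes of the ones above. Combining the two equivalences gives (i) $\Leftrightarrow$ (ii) $\Leftrightarrow$ (iii) and both representations of $a^{\parallel d}$. The only structural input beyond elementary manipulation is the regularity of $d$, which furnishes the idempotents $e$ and $f_{0}$; without it the elements $u$ and $v$ could not even be formed.
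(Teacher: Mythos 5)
Your proof is correct. Note first that the paper itself offers no proof of this statement: it is quoted verbatim as \cite[Theorem 3.2]{Mary2013}, so there is no internal argument to compare yours against. Your reconstruction is the standard one and all the computations check out: in (ii) $\Rightarrow$ (i) the identities $ud=dad$, $(1-e)u=1-e$ and $ub=d$ (with $e=dd^-$, $b=u^{-1}d$) do give $bad=d=dab$ and $b\in dR\cap Rd$; in (i) $\Rightarrow$ (ii) the candidate $c=bd^-+1-ba$ is indeed a two-sided inverse of $u$, since $uc=dd^-+(1-e)=1$ and the expansion of $cu$ collapses exactly as you say, using $bd^-d=b$, $bad=d$ and $bae=e$ (the cross terms $bd^-$ and $-bd^-e=-bd^-$ cancel). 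The mirror argument for (iii) with $f_0=d^-d$ and $c'=d^-b+1-ab$ goes through the same way, and uniqueness of $a^{\parallel d}$ then forces $u^{-1}d=dv^{-1}$. The only cosmetic remark is that the recorded fact $eba=ba$ is never actually used in the final cancellation, and that your argument, being purely equational, is if anything slightly more self-contained than the cited source, which routes part of the equivalence through the Green's-relations criterion $d\leq_{\mathcal H}dad$ (the form the present paper quotes separately as Lemma \ref{Mary left and right}).
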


It should be noted that $u=da+1-dd^{-} $ and $v=ad+1-d^{-}d $ form a Jacobson pair as $u$ and $v$ can be written as $u=1+d(a-d^{-})$ and $v=1+(a-d^{-})d$, respectively.

\begin{lemma} {\rm \cite[Corollary 3.17]{Zhu2017}}\label{classical MP char} Let $a\in R$ be regular with $a^-\in a\{1\}$. Then the following conditions are equivalent{\rm:}

\emph{(i)} $a\in R^\dag$.

\emph{(ii)} $u=aa^*+1-aa^-\in R^{-1}$.

\emph{(iii)} $v=a^*a+1-a^-a\in R^{-1}$.

In this case, $a^\dag=(u^{-1}a)^*=(av^{-1})^*$.
\end{lemma}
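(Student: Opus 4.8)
The plan is to prove the cycle (ii) $\Leftrightarrow$ (iii) $\Rightarrow$ (i) $\Rightarrow$ (ii), treating the two unit conditions symmetrically and routing the equivalence with Moore--Penrose invertibility through the one-sided ideal criteria recalled in the introduction ($a\in R^{(1,4)}$ iff $a\in aa^*R$, and $a\in R^{(1,3)}$ iff $a\in Ra^*a$). First I would dispose of (ii) $\Leftrightarrow$ (iii) using Jacobson's Lemma (Lemma \ref{jacobson}). The observation is that both units factor through $a$: writing $u=1+a(a^*-a^-)$ and $v=1+(a^*-a^-)a$ and setting $b=a$, $c=a^--a^*$, one has $u=1-bc$ and $v=1-cb$. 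Thus $u$ and $v$ form a Jacobson pair, so Lemma \ref{jacobson} gives $u\in R^{-1}\Leftrightarrow v\in R^{-1}$ together with a formula linking $u^{-1}$ and $v^{-1}$.

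For (ii) $\Rightarrow$ (i) the key computation is $aa^-u=aa^*$, which follows from $aa^-a=a$ and $aa^-(1-aa^-)=0$. If $u\in R^{-1}$ this yields $aa^-=aa^*u^{-1}$, hence $a=aa^-a=aa^*u^{-1}a\in aa^*R$, so $a$ has a $\{1,4\}$-inverse. The right-hand analogue $v\,a^-a=a^*a$ gives, when $v\in R^{-1}$, that $a=a\,a^-a=av^{-1}a^*a\in Ra^*a$, so $a$ has a $\{1,3\}$-inverse. Since (ii) and (iii) are equivalent by the first step, $u\in R^{-1}$ forces both one-sided inverses to exist, and the classical fact $R^{(1,3)}\cap R^{(1,4)}=R^\dag$ (with $a^\dag=a^{(1,4)}aa^{(1,3)}$) gives $a\in R^\dag$. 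The implication (iii) $\Rightarrow$ (i) is identical after swapping sides.

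The main obstacle is (i) $\Rightarrow$ (ii). The natural attempt is to invoke Mary's criterion $a\in R^\dag\Leftrightarrow a^{\parallel a^*}$ exists (\cite{Mary2011}) and apply Lemma \ref{Mary inverse} with $d=a^*$ and $d^-=(a^-)^*$; but this produces the unit $aa^*+1-(aa^-)^*$, whose idempotent carries an involution and so does not literally coincide with $u=aa^*+1-aa^-$. Because of this mismatch a one-line reduction to Lemma \ref{Mary inverse} is unavailable, and I would instead argue (i) $\Rightarrow$ (ii) directly by constructing $u^{-1}$. Put $p=aa^-$, an idempotent with $pa=a$ and $p\,aa^*=aa^*$. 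Relative to $1=p+(1-p)$ the element $u$ is block upper triangular: the lower-left corner vanishes since $(1-p)aa^*=0$, the lower-right block is $1-p$ (the identity of the corner $(1-p)R(1-p)$), and the upper-left block is $s=aa^*aa^-$ in $pRp$. Hence $u\in R^{-1}$ iff $s$ is invertible in $pRp$.

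To finish, assuming $a\in R^\dag$ I would verify that $s^{-1}=aa^-(a^\dag)^*a^\dag aa^-$ inverts $s$ in the corner $pRp$; the computation reduces to the Penrose identities $aa^*(a^\dag)^*=a$, $(a^\dag)^*a^*=aa^\dag$ and $a^\dag aa^*=a^*$, together with $aa^-(a^\dag)^*=(a^\dag)^*$, which holds because $(a^\dag)^*=a\bigl(a^\dag(a^\dag)^*\bigr)\in aR$. This makes $u$ invertible, and back-substituting into $u^{-1}a=s^{-1}a=(a^\dag)^*a^\dag a$ and taking adjoints gives $(u^{-1}a)^*=a^\dag$, the asserted formula; the companion $a^\dag=(av^{-1})^*$ then follows by the left-right symmetric argument (or from the Jacobson relation between $u^{-1}$ and $v^{-1}$). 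I expect the only genuine bookkeeping to be this last corner computation, precisely because the involution on $(aa^-)^*$ obstructs a direct appeal to Mary's characterization.
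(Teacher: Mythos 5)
Your proposal is correct; I checked the load-bearing computations and they all go through. Note, however, that the paper does not prove this lemma at all --- it imports it verbatim as \cite[Corollary 3.17]{Zhu2017} --- so there is no internal proof to match against; your argument stands or falls on its own, and it stands. The Jacobson pairing $u=1-a(a^--a^*)$, $v=1-(a^--a^*)a$ correctly settles (ii) $\Leftrightarrow$ (iii); the identities $aa^-u=aa^*$ and $va^-a=a^*a$ give $a\in aa^*R\cap Ra^*a$, hence (i); and in the Peirce decomposition relative to $p=aa^-$ one indeed has $(1-p)up=0$, $(1-p)u(1-p)=1-p$, $pup=aa^*aa^-$, and your candidate $s'=aa^-(a^\dag)^*a^\dag aa^-$ satisfies $ss'=s's=p$ (using $(a^\dag)^*\in aR$, $aa^*(a^\dag)^*=a$ and $a^\dag aa^*=a^*$), after which $u^{-1}a=s'a=(a^\dag)^*a^\dag a$ and $(u^{-1}a)^*=a^\dag$. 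You are also right that Lemma \ref{Mary inverse} with $d=a^*$ produces the idempotent $(aa^-)^*$ rather than $aa^-$, so the one-line reduction genuinely fails. The only remark worth making is that the paper's own toolkit offers a much shorter path that bypasses both the $\{1,3\}$/$\{1,4\}$ detour and the corner computation: from $ua=aa^*a$ one gets $a=u^{-1}aa^*a\in Raa^*a$, so Lemma \ref{MP inverse ideal char} yields $a\in R^\dag$ \emph{and} the formula $a^\dag=(u^{-1}a)^*$ in one stroke, while for (i) $\Rightarrow$ (ii) one can simply exhibit $u^{-1}=(a^\dag)^*a^\dag a a^- + 1 - aa^-$ and verify two products. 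Your version costs more bookkeeping but has the advantage of isolating exactly where invertibility of $u$ lives (the corner $aa^-Raa^-$), which is a reusable observation.
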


\begin{theorem} \label{vw intersect} Let $a,w,v\in R$ and let $v\in R^{\parallel a}$ with $a^-\in a\{1\}$. Then the following conditions are equivalent{\rm:}

\emph{(i)} $a\in R_w^{\tiny\textcircled{\tiny{\#}}} \cap R_{v,\tiny\textcircled{\tiny{\#}}}$.

\emph{(ii)} $w\in R^ {\parallel a}$ and $a\in R^{\dag}$.

\emph{(iii)} $u=awavaa^*+1-aa^{-}\in R^{-1}$.

\emph{(iv)} $r=avawaa^*+1-aa^{-}\in R^{-1}$.

\emph{(v)} $s=wavaa^* a +1-a^{-}a\in R^{-1}$.

\emph{(vi)} $t=vawaa^*a+1-a^{-}a\in R^{-1}$.

In this case, $a_w^{\tiny\textcircled{\tiny{\#}}}=avaa^*as^{-1}(u^{-1}awava)^*$ and $a_{v,\tiny\textcircled{\tiny{\#}}}=(u^{-1}awava)^*awaa^*at^{-1}$.
\end{theorem}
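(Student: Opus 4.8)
The plan is to split the seven conditions into three layers: the core-inverse statement (i) against (ii), the four unit conditions (iii)--(vi) against two ``invertibility along $a$'' statements, and finally those two statements back against (ii). For (i) $\Leftrightarrow$ (ii) I would simply invoke Proposition \ref{wv core char}, which says $a\in R_w^{\tiny\textcircled{\tiny{\#}}}\cap R_{v,\tiny\textcircled{\tiny{\#}}}$ iff $w,v\in R^{\parallel a}$ and $a\in R^{\dag}$; since $v\in R^{\parallel a}$ is a standing hypothesis this collapses to exactly (ii).

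Next I would dispose of the unit conditions. Conditions (iii) and (v) are a Jacobson pair: with $x=a$ and $y=wavaa^{*}-a^{-}$ one checks $u=1+xy$ and $s=1+yx$ (both identities are immediate from $aa^{-}=a\cdot a^{-}$), so Lemma \ref{jacobson} gives $u\in R^{-1}\Leftrightarrow s\in R^{-1}$; the same device with $y=vawaa^{*}-a^{-}$ gives (iv) $\Leftrightarrow$ (vi). Reading $u$ and $s$ through Lemma \ref{Mary inverse} with $d=a$ and inner inverse $a^{-}$, condition (iii) (equivalently (v)) says exactly that $wavaa^{*}\in R^{\parallel a}$, and (iv) (equivalently (vi)) says $vawaa^{*}\in R^{\parallel a}$. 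Thus everything reduces to proving, under $v\in R^{\parallel a}$, that (ii) is equivalent to each of $wavaa^{*}\in R^{\parallel a}$ and $vawaa^{*}\in R^{\parallel a}$.

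The technical heart is two facts about the inverse along the fixed element $a$. \emph{Composition}: if $c_{1},c_{2}\in R^{\parallel a}$ then $c_{1}ac_{2}\in R^{\parallel a}$; I would prove this by checking $a\in a(c_{1}ac_{2})aR\cap Ra(c_{1}ac_{2})a$ via Lemma \ref{Mary left and right}, each membership obtained by repeatedly substituting an interior occurrence of $a$ using $a\in ac_{i}aR\cap Rac_{i}a$ (a purely formal telescoping). \emph{Cancellation}: if $c_{1}ac_{2}\in R^{\parallel a}$ and $c_{2}\in R^{\parallel a}$ then $c_{1}\in R^{\parallel a}$ (with the left-handed analogue swapping $c_{1},c_{2}$). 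Granting these, the forward half of (ii) $\Leftrightarrow$ ``$wavaa^{*}\in R^{\parallel a}$'' is pure composition: from $w,v\in R^{\parallel a}$ and $a\in R^{\dag}$ (equivalently $a^{*}\in R^{\parallel a}$, via the stated rule $w\in R^{\parallel a}\Leftrightarrow w^{*}\in R^{\parallel a^{*}}$ together with Mary's $a\in R^{\parallel a^{*}}\Leftrightarrow a\in R^{\dag}$) one gets $vaa^{*}\in R^{\parallel a}$ and then $wa(vaa^{*})=wavaa^{*}\in R^{\parallel a}$. For the converse I would first read off $a\in R^{\dag}$ for free from the $\mathcal{L}$-side, since $a\in Ra(wavaa^{*})a=R(awav)(aa^{*}a)\subseteq Raa^{*}a$ forces $a\in R^{\dag}$ by Lemma \ref{MP inverse ideal char}; then $vaa^{*}\in R^{\parallel a}$ by composition, and cancellation strips the $w$-factor. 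The statement for $vawaa^{*}$ is handled symmetrically, applying the left-handed cancellation first and the right-handed one second. I expect the cancellation lemma to be the main obstacle: its $\mathcal{R}$-side is the easy sandwich $aR=a(c_{1}ac_{2})aR\subseteq ac_{1}aR\subseteq aR$, but the $\mathcal{L}$-side needs an argument. The idea I would use is to write $c_{2}^{\parallel a}=a\xi$ (legitimate since $c_{2}^{\parallel a}\in aR\cap Ra$), so that $ac_{2}c_{2}^{\parallel a}=a$ becomes $(ac_{2}a)\xi=a$; multiplying $a=\nu(ac_{1})(ac_{2}a)$ on the right by $\xi$ yields $c_{2}^{\parallel a}=a\xi=\nu ac_{1}a\in Rac_{1}a$, and combining with $a=(ac_{2})c_{2}^{\parallel a}\in Rc_{2}^{\parallel a}$ gives the missing relation $a\in Rac_{1}a$.

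Finally, for the representation formulas I would feed the established equivalences into Theorems \ref{relate to mary inverse} and \ref{relate to dual mary inverse}, namely $a_w^{\tiny\textcircled{\tiny{\#}}}=w^{\parallel a}a^{(1,3)}$ and $a_{v,\tiny\textcircled{\tiny{\#}}}=a^{(1,4)}v^{\parallel a}$, and then substitute the closed forms of $w^{\parallel a},v^{\parallel a}$ from Lemma \ref{Mary inverse} and of the $\{1,3\}$- and $\{1,4\}$-inverses built from the Moore--Penrose data in Lemma \ref{classical MP char}. A convenient hook is the unconditional identity $ua=awavaa^{*}a$ (immediate since $(1-aa^{-})a=0$): it gives $a=(u^{-1}awava)a^{*}a$, so $(u^{-1}awava)^{*}$ is a $\{1,3\}$-inverse of $a$, which plants the factor $(u^{-1}awava)^{*}$ in the stated expression. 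The remaining passage to $a_w^{\tiny\textcircled{\tiny{\#}}}=avaa^{*}as^{-1}(u^{-1}awava)^{*}$ and $a_{v,\tiny\textcircled{\tiny{\#}}}=(u^{-1}awava)^{*}awaa^{*}at^{-1}$ is routine bookkeeping with these inverses, which I would not grind through here.
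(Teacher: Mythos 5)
Your proposal is correct, but it reaches the central equivalences (ii) $\Leftrightarrow$ (iii) $\Leftrightarrow$ (iv) by a genuinely different route from the paper. The paper works entirely at the level of units: from (ii) it assembles $u$ as an explicit product of units, $u=(awavaa^-+1-aa^-)(aa^*+1-aa^-)$, where the first factor is manufactured from $aw+1-aa^-$ and $av+1-aa^-$ by multiplying and shuttling factors with Jacobson's lemma; the converse peels these unit factors off one at a time, extracting $a\in R^\dag$ first from $a=u^{-1}awavaa^*a\in Raa^*a$ exactly as you do. You instead pass each of (iii)--(vi) through Lemma \ref{Mary inverse} to the single statement $wavaa^*\in R^{\parallel a}$ (resp.\ $vawaa^*\in R^{\parallel a}$) and then prove composition and cancellation properties of $R^{\parallel a}$ from the ideal criterion of Lemma \ref{Mary left and right}. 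Your cancellation argument is sound: the chain $c_2^{\parallel a}=a\xi=\nu ac_1a$ followed by $a=ac_2c_2^{\parallel a}\in Rac_1a$ does close the $\mathcal{L}$-side, and your order of operations (extract $a\in R^\dag$ first, so that $a^*\in R^{\parallel a}$ and hence $vaa^*\in R^{\parallel a}$ is available before cancelling the $w$-factor) is exactly right. What your approach buys is structural clarity --- the composition/cancellation lemmas also explain, in one stroke, the paper's later corollary on $wav\in R^{\parallel a}$ and Theorem \ref{vw intersect dedekind} --- at the cost of two auxiliary lemmas the paper avoids. One small caveat on the representation formulas: you correctly plant the $\{1,3\}$-inverse $(u^{-1}awava)^*$ via $ua=awavaa^*a$, but the other factor $w^{\parallel a}=avaa^*as^{-1}$ is not read off from the closed form in Lemma \ref{Mary inverse}; it comes from rewriting $as=awavaa^*a$ as $a=(awa)(vaa^*as^{-1})$ and applying Lemma \ref{Mary left and right} (namely $w^{\parallel a}=ax$ whenever $a=awax$), which is the one step of your deferred bookkeeping worth writing down.
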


\begin{proof}
(i) $\Leftrightarrow$ (ii) follows from Theorems \ref{relate to mary inverse} and \ref{relate to dual mary inverse}. (iii) $\Leftrightarrow$ (v) and (iv) $\Leftrightarrow$ (vi) by Lemma \ref{jacobson}. We next just show (ii) $\Leftrightarrow$ (iii) $\Leftrightarrow$ (iv).

(ii) $\Rightarrow$ (iii) Since $v\in R^{\parallel a}$, one can get $av+1-aa^-\in R^{-1}$ by Lemma \ref{Mary inverse}. Also, $w\in R^{\parallel a}$ gives $aw+1-aa^-\in R^{-1}$ and hence $awaa^-+1-aa^-\in R^{-1}$ by Lemma \ref{jacobson}. Thus, we have $(awaa^-+1-aa^-)(av+1-aa^-)=awav+1-aa^-\in R^{-1}$. Lemma \ref{classical MP char} ensures that $aa^*+1-aa^-\in R^{-1}$ as $a\in R^\dag$. Note that $awav+1-aa^-$ and $awavaa^-+1-aa^-$ are Jacobson pairs. Then $u=awavaa^*+1-aa^-=(awavaa^-+1-aa^-)(aa^*+1-aa^-)\in R^{-1}$.

(iii) $\Rightarrow$ (ii) Given (iii), i.e., $u=awavaa^*+1-aa^{-}\in R^{-1}$ for some $a^-\in a\{1\}$, then we have $ua=awavaa^*a$ and whence $a=u^{-1}awavaa^*a\in Raa^*a$. By Lemma \ref{MP inverse ideal char}, we have at once $a\in R^\dag$ and $a^\dag=(u^{-1}awava)^*$. This in turn gives $aa^*+1-aa^-\in R^{-1}$ from Lemma \ref{classical MP char}. We get that $awavaa^-+1-aa^-=u(aa^*+1-aa^-)^{-1}\in R^{-1}$, and consequently $awav+1-aa^-\in R^{-1}$. Note that the assumption $v\in R^{\parallel a}$ implies $av+1-aa^-\in R^{-1}$. Then $awaa^-+1-aa^-=(awav+1-aa^-)(av+1-aa^-)^{-1}\in R^{-1}$, whence $aw+1-aa^-\in R^{-1}$, which guarantees that $w\in R^{\parallel a}$ by Lemma \ref{Mary inverse}.

(ii) $\Leftrightarrow$ (iv) can be proved by a similar way of (ii) $\Leftrightarrow$ (iii).

We next derive the formulae of $a_w^{\tiny\textcircled{\tiny{\#}}}$ and $a_{v,\tiny\textcircled{\tiny{\#}}}$. Since $s=wavaa^* a +1-a^{-}a\in R^{-1}$, we have
$as=awavaa^*a$ and $a=awavaa^*as^{-1}$. As $w^{\parallel a}$ exists, then $w^{\parallel a}=avaa^*as^{-1}$ by Lemma \ref{Mary left and right}. So, $a_w^{\tiny\textcircled{\tiny{\#}}}=w^{\parallel a}a^{(1,3)}=w^{\parallel a}a^\dag=avaa^*as^{-1}(u^{-1}awava)^*$.

Similarly, $a_{v,\tiny\textcircled{\tiny{\#}}}=a^{(1,4)}v^{\parallel a}=(u^{-1}awava)^*awaa^*at^{-1}$. \hfill$\Box$
\end{proof}

Let us now pause to make some remarks.

\begin{remark} {\rm (1) In Theorem \ref{vw intersect} above, the assumption ``$v\in R^{\parallel a}$'' cannot be dropped. Such as, let $R$ and the involution $*$ be the same as that of Remark \ref{rmk1} above. Take $w=\sum_{i=1}^{\infty}e_{i,i+1}$, $v=\sum_{i=1}^{\infty}e_{i+1,i}$ and $a=1$, then $v \notin R^{\parallel a}$ since $v\notin R^{-1}$. By a direct check, the condition (iii) $awavaa^*+1-aa^-=wv=1\in R^{-1}$ cannot imply that $w\in R^ {\parallel a}$ (in the item (ii)) as $w\notin R^{-1}$.

(2) The equivalences among the items (i), (iii), (iv), (v) and (vi) in Theorem \ref{vw intersect} also hold without the assumption ``$v\in R^{\parallel a}$'' when $R$ is a Dedekind-finite ring satisfying the property $xy = 1 \Rightarrow yx = 1$ for any $x,y\in R$ (see Theorem \ref{vw intersect dedekind} below).

(3) The formula of $a_w^{\tiny\textcircled{\tiny{\#}}}$ can also be given by Corollary \ref{extended repre}. From $s=wavaa^* a +1-a^{-}a\in R^{-1}$, it follows that $was=(wa)^2vaa^* a$ and hence $wa=(wa)^2vaa^* as^{-1}$. As $wa\in R^\#$, then $(wa)^\#=wa(vaa^* as^{-1})^2=(wavaa^* as^{-1})vaa^* as^{-1}$. So, $a(wa)^\#=(a wavaa^* as^{-1})vaa^* as^{-1}=avaa^* as^{-1}$. As a consequence, $a_w^{\tiny\textcircled{\tiny{\#}}}=a(wa)^\#a^{(1,3)}=avaa^*as^{-1}(u^{-1}awava)^*$.}
\end{remark}

It follows from \cite{Mary2011} that $a\in R^\dag$ if and only if $a$ is invertible along $a^*$ if and only if $a^*$ is invertible along $a$ if and only if $a\in aa^*R \cap Ra^*a$. Combining with Lemma \ref{Mary inverse} and Theorem \ref{vw intersect}, the criteria for both $w$-core invertible and dual $v$-core invertible elements can be given by Mary's inverse along an element in $R$.

\begin{corollary} \label{vw} Let $a,w,v\in R$ and let $v\in R^{\parallel a}$. Then the following conditions are equivalent{\rm:}

\emph{(i)} $a\in R_w^{\tiny\textcircled{\tiny{\#}}} \cap R_{v,\tiny\textcircled{\tiny{\#}}}$.

\emph{(ii)} $w, a^*\in R^{\parallel a}$.

\emph{(iii)} $wavaa^*\in R^{\parallel a}$.

\emph{(iv)} $vawaa^*\in R^{\parallel a}$.

In this case, $a_w^{\tiny\textcircled{\tiny{\#}}}=avaa^*ax(yawava)^*$ and $a_{v,\tiny\textcircled{\tiny{\#}}}=(tavawa)^*awaa^*as$, where $x,y\in R$ satisfy $a=awavaa^*ax=yawavaa^*a$, and $s,t\in R$ satisfy $a=avawaa^*as=tavawaa^*a$.
\end{corollary}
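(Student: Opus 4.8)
The plan is to read Corollary \ref{vw} as the ``inverse along an element'' reformulation of Theorem \ref{vw intersect}, translating the unit conditions there back into invertibility conditions via Lemma \ref{Mary inverse}. The hypothesis $v\in R^{\parallel a}$ already forces $a$ to be regular (recall that $a\in R^{\parallel d}$ makes $d$ regular), so I may fix some $a^-\in a\{1\}$ and apply Theorem \ref{vw intersect} and Lemma \ref{Mary inverse} with no further regularity assumption.

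For the equivalences I would match the four conditions against those of Theorem \ref{vw intersect} one at a time. Condition (i) is literally Theorem \ref{vw intersect}(i). For (ii), the quoted result of Mary (stated just before this corollary) gives $a\in R^\dag$ if and only if $a^*\in R^{\parallel a}$, so ``$w,a^*\in R^{\parallel a}$'' is exactly ``$w\in R^{\parallel a}$ and $a\in R^\dag$'', which is Theorem \ref{vw intersect}(ii). For (iii) and (iv) I would invoke Lemma \ref{Mary inverse} with $d=a$: the element $wavaa^*$ is invertible along $a$ if and only if $a(wavaa^*)+1-aa^-=awavaa^*+1-aa^-\in R^{-1}$, which is Theorem \ref{vw intersect}(iii); similarly $vawaa^*\in R^{\parallel a}$ translates into $avawaa^*+1-aa^-\in R^{-1}$, i.e.\ Theorem \ref{vw intersect}(iv). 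Since (i)--(vi) of Theorem \ref{vw intersect} are equivalent, so are (i)--(iv) here.

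For the formulas I would apply Lemma \ref{Mary left and right} to $wavaa^*\in R^{\parallel a}$ and to $vawaa^*\in R^{\parallel a}$; this yields precisely the elements $x,y$ with $a=awavaa^*ax=yawavaa^*a$ and $s,t$ with $a=avawaa^*as=tavawaa^*a$, together with $(wavaa^*)^{\parallel a}=ax=ya$ and $(vawaa^*)^{\parallel a}=as=ta$. The proof of Theorem \ref{vw intersect} already records, through the units $u=awavaa^*+1-aa^-$, $s_0=wavaa^*a+1-a^-a$, $r=avawaa^*+1-aa^-$, $t_0=vawaa^*a+1-a^-a$, that $w^{\parallel a}=avaa^*as_0^{-1}$, $v^{\parallel a}=awaa^*at_0^{-1}$ and $a^\dag=(u^{-1}awava)^*=(r^{-1}avawa)^*$, with $u^{-1}a=as_0^{-1}=(wavaa^*)^{\parallel a}$ and $r^{-1}a=at_0^{-1}=(vawaa^*)^{\parallel a}$. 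Substituting $as_0^{-1}=ax$ gives $w^{\parallel a}=avaa^*ax$, and $u^{-1}a=ya$ together with $u^{-1}awava=(u^{-1}a)(wava)=(ya)(wava)=yawava$ gives $a^\dag=(yawava)^*$; since $a_w^{\tiny\textcircled{\tiny{\#}}}=w^{\parallel a}a^\dag$ by Theorem \ref{relate to mary inverse}, the first formula follows. Dually, $at_0^{-1}=as$ gives $v^{\parallel a}=awaa^*as$, and $r^{-1}a=ta$ with $r^{-1}avawa=(r^{-1}a)(vawa)=(ta)(vawa)=tavawa$ gives $a^\dag=(tavawa)^*$; since $a_{v,\tiny\textcircled{\tiny{\#}}}=a^{(1,4)}v^{\parallel a}=a^\dag v^{\parallel a}$ by Theorem \ref{relate to dual mary inverse}, the second formula follows.

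The work is bookkeeping rather than genuine difficulty, and the one place demanding care is the second factor of each formula. In Theorem \ref{vw intersect} the Moore--Penrose inverse surfaces uniformly as $(u^{-1}awava)^*$, so to reach the stated asymmetric forms $(yawava)^*$ and $(tavawa)^*$ I must carry out the one-sided cancellations $u^{-1}a=ya$ and $r^{-1}a=ta$ correctly and, on the dual side, re-derive $a^\dag$ from the unit $r$ (the symmetric half of the proof of Theorem \ref{vw intersect}, via Lemma \ref{MP inverse ideal char}) rather than from $u$. Uniqueness of the Moore--Penrose inverse guarantees the two representations agree, but tracking the left factors $ya,ta$ against the right factors $as$ through the involution is the only subtle point.
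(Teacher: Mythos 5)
Your proof is correct and follows essentially the same route as the paper: the equivalences come from translating conditions (iii)--(vi) of Theorem \ref{vw intersect} through Lemma \ref{Mary inverse} (with $a$ regular because $v\in R^{\parallel a}$), and the formulas from identifying $w^{\parallel a}=avaa^*ax$, $v^{\parallel a}=awaa^*as$ and the $\{1,3\}$-/$\{1,4\}$-inverse factors via Lemma \ref{Mary left and right}. The only cosmetic difference is that the paper reads $a^{(1,3)}=(yawava)^*$ directly off the factorization $a=(yawava)a^*a$, whereas you reach the same element by the detour $a^\dag=(u^{-1}awava)^*$ combined with the cancellation $u^{-1}a=ya$.
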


\begin{proof} As $wavaa^*\in R^ {\parallel a}$, then $a\in awavaa^*aR \cap Rawavaa^*a$. There exist $x,y\in R$ such that $a=awavaa^*ax=yawavaa^*a$. Since $w\in R^ {\parallel a}$, we have $w^{\parallel a}=avaa^*ax$. Recall that if $a=xa^*a$ then $x^*$ is a $\{1,3\}$-inverse of $a$.
We have $a^{(1,3)}=(yawava)^*$ by $a=yawavaa^*a$. So, $a_w^{\tiny\textcircled{\tiny{\#}}}=w^{\parallel a}a^{(1,3)}=avaa^*ax(yawava)^*$.

Similarly, we have $a_{v,\tiny\textcircled{\tiny{\#}}}=(tavawa)^*awaa^*as$, where $s,t\in R$ satisfy $a=avawaa^*as=tavawaa^*a$.
\hfill$\Box$
\end{proof}

As a special case of Theorem \ref{vw intersect}, we have the following corollary.

\begin{corollary} \label{referee suggest} Let $a,w,v\in R$  with $a^-\in a\{1\}$. Then the following conditions are equivalent{\rm:}

\emph{(i)} $a\in R_w^{\tiny\textcircled{\tiny{\#}}} \cap R_{v,\tiny\textcircled{\tiny{\#}}}$.

\emph{(ii)} $w,v\in R^ {\parallel a}$ and $a\in R^{\dag}$.

\emph{(iii)} $v\in R^{\parallel a}$ and $u=awaa^*+1-aa^{-}\in R^{-1}$.

\emph{(iv)} $v\in R^{\parallel a}$ and $r=aa^*aw+1-aa^{-}\in R^{-1}$.

\emph{(v)} $v\in R^{\parallel a}$ and $s=waa^*a +1-a^{-}a\in R^{-1}$.

\emph{(vi)} $v\in R^{\parallel a}$ and $t=a^*awa+1-a^{-}a\in R^{-1}$.

In this case, $a_w^{\tiny\textcircled{\tiny{\#}}}=aa^*as^{-1}(u^{-1}awa)^*$.
\end{corollary}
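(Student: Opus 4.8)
The plan is to prove the six conditions equivalent by first disposing of the two ``reflection'' equivalences via Jacobson's Lemma, then reducing to the substantive equivalences of the unit conditions with (ii), and finally reading off the formula. Throughout I write $e=aa^{-}$ and $f=a^{-}a$, which are idempotents because $aa^{-}a=a$, and I shall repeatedly use the absorption identities $ea=a$ and $af=a$. Since $a\in R_{v,\tiny\textcircled{\tiny{\#}}}$ already forces $v\in R^{\parallel a}$ by Theorem \ref{relate to dual mary inverse}, the hypothesis $v\in R^{\parallel a}$ is consistent across all six statements, and the equivalence (i) $\Leftrightarrow$ (ii) is then exactly Proposition \ref{wv core char} (equivalently, combine Theorems \ref{relate to mary inverse} and \ref{relate to dual mary inverse} with $R^{(1,3)}\cap R^{(1,4)}=R^{\dag}$).

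Next I would record the two Jacobson reflections. Writing $u=awaa^{*}+1-e=1-a(a^{-}-waa^{*})$ and $s=waa^{*}a+1-f=1-(a^{-}-waa^{*})a$ exhibits $u,s$ as a Jacobson pair, so (iii) $\Leftrightarrow$ (v) by Lemma \ref{jacobson}; likewise $r=aa^{*}aw+1-e=1-a(a^{-}-a^{*}aw)$ and $t=a^{*}awa+1-f=1-(a^{-}-a^{*}aw)a$ give (iv) $\Leftrightarrow$ (vi). It therefore remains to connect the unit conditions to (ii).

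The heart of the proof is (ii) $\Leftrightarrow$ (iii) and (ii) $\Leftrightarrow$ (iv). For (iii) I would use the factorization $u=(awaa^{-}+1-e)(aa^{*}+1-e)$, which holds by the absorption identities. Here $aa^{*}+1-e\in R^{-1}$ detects $a\in R^{\dag}$ by Lemma \ref{classical MP char}, while $awaa^{-}+1-e\in R^{-1}$ detects $w\in R^{\parallel a}$: the choice $x=1-aw,\ y=aa^{-}$ shows $awaa^{-}+1-e=1-xy$ and $aw+1-e=1-yx$ form a Jacobson pair, and $aw+1-e\in R^{-1}\Leftrightarrow w\in R^{\parallel a}$ by Lemma \ref{Mary inverse}. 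Thus (ii) $\Rightarrow$ (iii) since both factors are units. For (iii) $\Rightarrow$ (ii) I would extract from $ua=awaa^{*}a$ that $a=u^{-1}awaa^{*}a\in Raa^{*}a$, whence $a\in R^{\dag}$ by Lemma \ref{MP inverse ideal char}; this makes the second factor a unit, and dividing it out of $u$ leaves the first factor a unit, giving $w\in R^{\parallel a}$. The equivalence (ii) $\Leftrightarrow$ (iv) is symmetric, now via $r=(aa^{*}aa^{-}+1-e)(aw+1-e)$, where $aa^{*}aa^{-}+1-e$ is Jacobson-paired (through $x=1-aa^{*},\ y=aa^{-}$) with $aa^{*}+1-e$ and hence again detects $a\in R^{\dag}$; in the reverse direction I would instead pass to the paired condition (vi) and use $at=aa^{*}awa$ to get $a\in aa^{*}aR$, forcing $a\in R^{\dag}$ by Lemma \ref{MP inverse ideal char}.

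For the formula, once (ii) holds I have $a_w^{\tiny\textcircled{\tiny{\#}}}=w^{\parallel a}a^{(1,3)}=w^{\parallel a}a^{\dag}$ by Theorem \ref{relate to mary inverse}. From $as=awaa^{*}a$ and Lemma \ref{Mary left and right} one reads $w^{\parallel a}=aa^{*}as^{-1}$, and from $a=u^{-1}awaa^{*}a=(u^{-1}awa)\,a^{*}a$ together with Lemma \ref{MP inverse ideal char} one reads $a^{\dag}=(u^{-1}awa)^{*}$; multiplying gives $a_w^{\tiny\textcircled{\tiny{\#}}}=aa^{*}as^{-1}(u^{-1}awa)^{*}$. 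I expect the main obstacle to lie in the bookkeeping forced by $a^{-}$ being merely a $\{1\}$-inverse (neither reflexive nor Hermitian): one must verify that each unit expression factors correctly and, crucially, locate the right Jacobson pairs to slide the $aa^{-}$ and $a^{-}a$ blocks in and out \emph{without} assuming $a\in R^{\dag}$ prematurely. The reverse implications (iii) $\Rightarrow$ (ii) and (iv) $\Rightarrow$ (ii) are the delicate step, since a single invertibility hypothesis must yield both $a\in R^{\dag}$ and $w\in R^{\parallel a}$; the decisive trick is to extract $a\in R^{\dag}$ first from the appropriate one-sided ideal membership and only then peel off the remaining factor.
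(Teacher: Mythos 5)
Your proof is correct; every factorization and every Jacobson pair you use checks out (e.g.\ $(awaa^-+1-aa^-)(aa^*+1-aa^-)=awaa^*+1-aa^-$ using only $aa^-a=a$), and the closing derivation of $a_w^{\tiny\textcircled{\tiny{\#}}}=aa^*as^{-1}(u^{-1}awa)^*$ matches the stated formula. The paper gives no written proof of this corollary: it is asserted as a special case of Theorem \ref{vw intersect}, the idea being that substituting $v:=a^-$ into the unit conditions there collapses $awavaa^*+1-aa^-$ to $awaa^*+1-aa^-$, while (i) $\Leftrightarrow$ (ii) is Proposition \ref{wv core char}. Your direct argument is essentially the proof of Theorem \ref{vw intersect} rerun with the factor $av$ deleted --- same tools (Lemmas \ref{jacobson}, \ref{Mary inverse}, \ref{classical MP char}, \ref{MP inverse ideal char}), same strategy of factoring $u$ into a product of two units and, in the converse direction, first extracting $a\in R^\dag$ from $a\in Raa^*a$ (resp.\ $a\in aa^*aR$) and only then peeling off the factor detecting $w\in R^{\parallel a}$. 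What your version buys is an honest justification of conditions (iv) and (vi): the unit $aa^*aw+1-aa^-$ is \emph{not} a substitution instance of the theorem's $avawaa^*+1-aa^-$ (setting $v=a^-$ there just returns $awaa^*+1-aa^-$ again), so your separate factorization $r=(aa^*aa^-+1-aa^-)(aw+1-aa^-)$, together with the Jacobson pairing of (iv) with (vi), fills in a step the paper leaves entirely implicit.
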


\begin{theorem} \label{vw intersect dedekind} Let $R$ be a Dedekind-finite ring and let $a,w,v\in R$. Suppose $a$ is regular and $a^-\in a\{1\}$. Then the following conditions are equivalent{\rm:}

\emph{(i)} $a\in R_w^{\tiny\textcircled{\tiny{\#}}} \cap R_{v,\tiny\textcircled{\tiny{\#}}}$.

\emph{(ii)} $wav\in R^ {\parallel a}$ and $a\in R^{\dag}$.

\emph{(iii)} $vaw\in R^ {\parallel a}$ and $a\in R^{\dag}$.

\emph{(iv)} $u=awavaa^*+1-aa^{-}\in R^{-1}$.

\emph{(v)} $r=avawaa^*+1-aa^{-}\in R^{-1}$.

\emph{(vi)} $s=wavaa^* a +1-a^{-}a\in R^{-1}$.

\emph{(vii)} $t=vawaa^*a+1-a^{-}a\in R^{-1}$.

In this case, $a_w^{\tiny\textcircled{\tiny{\#}}}=avaa^*as^{-1}(u^{-1}awava)^*$ and $a_{v,\tiny\textcircled{\tiny{\#}}}=(u^{-1}awava)^*awaa^*at^{-1}$.
\end{theorem}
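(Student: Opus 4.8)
The plan is to reduce every one of the seven conditions to a single symmetric ``core'' requirement, namely
\[
w,v\in R^{\parallel a}\quad\text{and}\quad a\in R^{\dag},
\]
which I will call (C), and then to exploit the fact that in a Dedekind-finite ring a product of two elements is a unit if and only if both factors are units. First I would invoke Proposition \ref{wv core char}: condition (i) is \emph{equivalent} to (C). Since (C) is invariant under interchanging $w$ and $v$, this immediately makes the whole statement symmetric in $w,v$ once everything is routed through (C). The workhorse observation is: if $\alpha\beta\in R^{-1}$, then $\alpha$ is right invertible and $\beta$ is left invertible, so in a Dedekind-finite ring (where one-sided inverses are two-sided) both $\alpha,\beta\in R^{-1}$; the converse is trivial. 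This splitting principle is precisely where Dedekind-finiteness does the work that the hypothesis $v\in R^{\parallel a}$ did in Theorem \ref{vw intersect}, and I expect it to be the main obstacle: the forward passage (units multiply to a unit) is free, but recovering invertibility of each factor from invertibility of the single product is the one genuinely new step.

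Next I would record two factorization identities, valid whenever $aa^-a=a$:
\[
awav+1-aa^-=(awaa^-+1-aa^-)(av+1-aa^-),
\]
\[
u=awavaa^*+1-aa^-=(awavaa^-+1-aa^-)(aa^*+1-aa^-).
\]
Combining the first identity with Jacobson's Lemma (Lemma \ref{jacobson}), which makes $awaa^-+1-aa^-$ a Jacobson partner of $aw+1-aa^-$ (and $awavaa^-+1-aa^-$ a partner of $awav+1-aa^-$), and with Lemma \ref{Mary inverse}, I would deduce that $wav\in R^{\parallel a}$ is equivalent to $w,v\in R^{\parallel a}$: the forward direction is a product of units, the backward direction is the splitting principle. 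This gives (C) $\Leftrightarrow$ (ii). Feeding the second identity into the same machine, now together with Lemma \ref{classical MP char} (which identifies $aa^*+1-aa^-\in R^{-1}$ with $a\in R^{\dag}$) and Lemma \ref{Mary inverse} applied to $wav$, yields (ii) $\Leftrightarrow$ (iv); note that here the splitting automatically produces $a\in R^{\dag}$, so this does not have to be assumed separately in (iv).

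The remaining equivalences I would get formally. Interchanging $w$ and $v$ carries (ii)$\to$(iii), (iv)$\to$(v), (vi)$\to$(vii) while fixing (C), so (C) $\Leftrightarrow$ (iii) $\Leftrightarrow$ (v) follows from the symmetric versions of the arguments above. Finally (iv) $\Leftrightarrow$ (vi) and (v) $\Leftrightarrow$ (vii) are pure applications of Jacobson's Lemma: writing $u=1-a(a^--wavaa^*)$ and $s=1-(a^--wavaa^*)a$ exhibits $u$ and $s$ as a Jacobson pair, and likewise for $r$ and $t$.

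For the closing formulas, the computation of Theorem \ref{vw intersect} carries over verbatim once the equivalences are established. From $s\in R^{-1}$ one has $as=awavaa^*a$, hence $a=awavaa^*as^{-1}$, so $w^{\parallel a}=avaa^*as^{-1}$ by Lemma \ref{Mary left and right}; and $a=u^{-1}awavaa^*a\in Raa^*a$ gives $a^{\dag}=(u^{-1}awava)^*$ by Lemma \ref{MP inverse ideal char}. Then $a_w^{\tiny\textcircled{\tiny{\#}}}=w^{\parallel a}a^{(1,3)}=avaa^*as^{-1}(u^{-1}awava)^*$, and the dual-core expression $a_{v,\tiny\textcircled{\tiny{\#}}}=(u^{-1}awava)^*awaa^*at^{-1}$ follows symmetrically from $t\in R^{-1}$ and $v^{\parallel a}=awaa^*at^{-1}$. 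Everything except the backward splitting is bookkeeping with Jacobson pairs and the $w\leftrightarrow v$ symmetry.
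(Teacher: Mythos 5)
Your proof is correct, and it diverges from the paper's at precisely the step where Dedekind-finiteness does its work. The paper routes (ii) $\Rightarrow$ (i) through principal ideals: from $wav\in R^{\parallel a}$ it gets $a\in awavaR\subseteq awaR$, concludes that $w$ is \emph{right} invertible along $a$ (citing \cite[Theorem 2.4]{Zhu2016}), and then uses the fact that one-sided invertibility along an element is two-sided in a Dedekind-finite ring; the unit equivalences (ii) $\Leftrightarrow$ (iv) and (iii) $\Leftrightarrow$ (v) are then dispatched by reference back to the proof of Theorem \ref{vw intersect}. You instead stay entirely at the level of the unit criteria of Lemmas \ref{Mary inverse} and \ref{classical MP char} and replace the one-sided-invertibility-along-$a$ argument by the elementary splitting principle that in a Dedekind-finite ring a product $\alpha\beta$ is a unit if and only if both $\alpha$ and $\beta$ are. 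Your two factorization identities are valid (they only use $aa^-a=a$ and the idempotency of $1-aa^-$), and together with the splitting principle they give (C) $\Leftrightarrow$ (ii) $\Leftrightarrow$ (iv) uniformly; the $w\leftrightarrow v$ symmetry then yields (iii) and (v), and the genuine Jacobson pairs $u=1-a(a^--wavaa^*)$, $s=1-(a^--wavaa^*)a$ and likewise $r,t$ yield (vi) and (vii). This is somewhat more self-contained than the paper's argument, since it avoids the external result on one-sided inverses along an element, at the cost of redoing computations already present in Theorem \ref{vw intersect}. One cosmetic remark: $aw+1-aa^-$ and $awaa^-+1-aa^-$ are not literally a Jacobson pair of each other; they are two elements of the form $1-ab$ sharing the common partner $wa+1-a^-a$, which is what makes them equi-invertible --- the paper uses the same shorthand, and the conclusion is unaffected. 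Your derivation of the closing formulas agrees with the paper's.
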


\begin{proof}

(i) $\Rightarrow$ (ii) Given $a\in R_w^{\tiny\textcircled{\tiny{\#}}} \cap R_{v,\tiny\textcircled{\tiny{\#}}}$, it follows from Proposition \ref{wv core char} that $w, v\in R^ {\parallel a}$ and $a\in R^{\dag}$. In terms of Lemma \ref{Mary left and right}, we know that $w\in R^ {\parallel a}$ implies that $a\in awaR \cap Rawa$, and that $v\in R^ {\parallel a}$ gives $a\in avaR \cap Rava$. Hence, $a\in awavaR \cap Rawava$, i.e., $wav\in R^{\parallel a}$.

(ii) $\Rightarrow$ (i) As $wav\in R^ {\parallel a}$, then $a\in awavaR \cap Rawava$ by Lemma \ref{Mary left and right}. In addition, $a\in awavaR\subseteq awaR$ shows that $w$ is right invertible along $a$ (see \cite[Theorem 2.4]{Zhu2016}). Note the fact that $w$ is right invertible along $a$ implies that $w$ is left invertible along $a$ in a Dedekind-finite ring $R$. Then $w\in R^{\parallel a}$. Similarly, $v\in R^{\parallel a}$.

(i) $\Leftrightarrow$ (iii) can be proved similarly.

(ii) $\Leftrightarrow$ (iv) and (iii) $\Leftrightarrow$ (v) are similar to the proof of Theorem \ref{vw intersect}.

(iv) $\Leftrightarrow$ (vi) and (v) $\Leftrightarrow$ (vii) follow from Lemma \ref{jacobson}. \hfill$\Box$
\end{proof}

As a consequence of Proposition \ref{wv core char} and Theorem \ref{vw intersect dedekind}, the characterization about the product along an element can be obtained.

\begin{corollary} Let $R$ be a Dedekind-finite ring and let $a,w,v\in R$. Then the following conditions are equivalent{\rm:}

\emph{(i)} $w,v\in R^{\parallel a}$.

\emph{(ii)} $wav\in R^{\parallel a}$.
\end{corollary}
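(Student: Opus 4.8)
The plan is to prove the two implications directly, translating each instance of invertibility along $a$ into Green-type ideal equalities via Lemma \ref{Mary left and right}: for $c\in R$ one has $c\in R^{\parallel a}$ if and only if $a\leq_{\mathcal{H}} aca$, that is, $acaR=aR$ and $Raca=Ra$. Since $c\in R^{\parallel a}$ forces $a$ to be regular, both hypotheses (i) and (ii) already guarantee the regularity of $a$, so no extra assumption is required. In effect the corollary isolates the purely ring-theoretic content (with the Moore--Penrose and $*$-data stripped away) of the equivalence (i)$\Leftrightarrow$(ii) of Theorem \ref{vw intersect dedekind} read together with Proposition \ref{wv core char}; I would nonetheless give the short self-contained argument, since the two directions behave quite differently.

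For (i) $\Rightarrow$ (ii) --- the direction that needs no Dedekind-finiteness --- I would start from $w\in R^{\parallel a}$, which by Lemma \ref{Mary left and right} yields $awaR=aR$ and $Rawa=Ra$, and from $v\in R^{\parallel a}$, which yields $avaR=aR$ and $Rava=Ra$. Substituting one ideal equality into the other gives $awavaR=aw(avaR)=aw\,aR=awaR=aR$ and, symmetrically, $Rawava=(Rawa)va=(Ra)va=Rava=Ra$. Hence $a\in awavaR\cap Rawava$, i.e. $a\leq_{\mathcal{H}} a(wav)a$, and a final appeal to Lemma \ref{Mary left and right} gives $wav\in R^{\parallel a}$. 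This is exactly the computation carried out inside the proof of Theorem \ref{vw intersect dedekind}, now read without the involution.

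For (ii) $\Rightarrow$ (i) I would begin with $wav\in R^{\parallel a}$, so that $a\in awavaR\cap Rawava$ by Lemma \ref{Mary left and right}. Shrinking the right-hand factors, $awavaR\subseteq awaR$ gives $a\in awaR$, and $Rawava\subseteq Rava$ gives $a\in Rava$; these say precisely that $w$ is right invertible along $a$ and that $v$ is left invertible along $a$. The crucial and final step is to promote these one-sided invertibilities to genuine invertibility along $a$, and this is exactly where Dedekind-finiteness is indispensable, via the fact (used in the proof of Theorem \ref{vw intersect dedekind}) that in a Dedekind-finite ring one-sided invertibility along an element is automatically two-sided; this yields $w,v\in R^{\parallel a}$. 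I expect this promotion to be the only genuine obstacle, everything else being ideal bookkeeping. It is worth stressing that the hypothesis cannot be dropped: the one-sided shift example in the remark following Theorem \ref{vw intersect}, namely $w=\sum_i e_{i,i+1}$, $v=\sum_i e_{i+1,i}$ and $a=1$, satisfies $wav=wv=1\in R^{\parallel a}$ yet $w\notin R^{\parallel a}$ (as $w$ is not invertible), so the equivalence genuinely fails outside the Dedekind-finite setting.
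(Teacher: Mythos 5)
Your proof is correct and follows essentially the same route as the paper: the paper obtains the corollary by citing Proposition \ref{wv core char} and Theorem \ref{vw intersect dedekind}, and your two implications are precisely the ideal computations ($awavaR=aR$, $Rawava=Ra$) and the Dedekind-finiteness promotion of one-sided invertibility along $a$ that are carried out inside the proof of that theorem. If anything, your self-contained version is slightly cleaner, since it makes explicit that the regularity of $a$ (a standing hypothesis of Theorem \ref{vw intersect dedekind}) comes for free from either condition of the corollary.
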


For any $a,w,v\in R$ with $a$ regular, taking $v=a^*$ in Corollary \ref{referee suggest}, we get $a\in R_w^{\tiny\textcircled{\tiny{\#}}} \cap R_{a^*,\tiny\textcircled{\tiny{\#}}}$ if and only if $w\in R^ {\parallel a}$ and $a\in R^{\dag}$ if and only if $a\in R_w^{\tiny\textcircled{\tiny{\#}}} \cap R_{w,\tiny\textcircled{\tiny{\#}}}$.

The following result, presents the characterization of the $w$-core inverse and the dual $w$-core inverse by units, whose proof is left to the reader.

\begin{theorem} \label{intersect} Let $a,w\in R$ and let $a$ be regular with $a^-\in a\{1\}$. Then the following conditions are equivalent{\rm:}

\emph{(i)} $a\in R_w^{\tiny\textcircled{\tiny{\#}}} \cap R_{w,\tiny\textcircled{\tiny{\#}}}$.

\emph{(ii)} $w\in R^{\parallel a}$ and $a\in R^{\dag}$.

\emph{(iii)} $a\in R_w^{\tiny\textcircled{\tiny{\#}}} \cap R_{a^*,\tiny\textcircled{\tiny{\#}}}$

\emph{(iv)} $u=awaa^* +1-aa^{-}\in R^{-1}$.

\emph{(v)} $r=a^*awa +1-a^{-}a\in R^{-1}$.

\emph{(vi)} $s=waa^*a +1-a^{-}a\in R^{-1}$.

\emph{(vii)} $t=aa^*aw +1-aa^{-}\in R^{-1}$.

In this case, $a_w^{\tiny\textcircled{\tiny{\#}}}=t^{-1}aa^*$ and $a_{w,\tiny\textcircled{\tiny{\#}}}=a^* as^{-1}$.
\end{theorem}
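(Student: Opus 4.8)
The plan is to deduce the theorem from Corollary \ref{referee suggest} by specialising $v = a^*$, so that the real work is to match up the conditions and to show that the standing hypothesis $v \in R^{\parallel a}$ of that corollary becomes automatic here. I would first settle (i) $\Leftrightarrow$ (ii) $\Leftrightarrow$ (iii). Recalling from the discussion preceding Lemma \ref{MP inverse ideal char} that $a^* \in R^{\parallel a}$ if and only if $a \in R^\dag$, taking $v = w$ in Proposition \ref{wv core char} yields (i) $\Leftrightarrow$ (ii), while taking $v = a^*$ there yields (iii) $\Leftrightarrow$ (ii), since the clause $a^* \in R^{\parallel a}$ collapses into $a \in R^\dag$.

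For the four unit conditions I would specialise Corollary \ref{referee suggest} to $v = a^*$: its unit conditions then read $awaa^* + 1 - aa^- \in R^{-1}$, $aa^*aw + 1 - aa^- \in R^{-1}$, $waa^*a + 1 - a^-a \in R^{-1}$ and $a^*awa + 1 - a^-a \in R^{-1}$, which are precisely conditions (iv), (vii), (vi) and (v) here, each still carrying the clause $a^* \in R^{\parallel a}$, i.e. $a \in R^\dag$. The crux is to show this clause is redundant, namely that each unit condition alone forces $a \in R^\dag$. For (iv), multiplying $u$ on the right by $a$ and using $(1 - aa^-)a = 0$ gives $ua = awaa^*a$, hence $a = u^{-1}awaa^*a = (u^{-1}aw)\,aa^*a \in Raa^*a$, so $a \in R^\dag$ by Lemma \ref{MP inverse ideal char}. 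Dually, multiplying $r$ on the left by $a$ gives $ar = aa^*awa$, hence $a = aa^*a\,(war^{-1}) \in aa^*aR$ and again $a \in R^\dag$. The pairs $\{u,s\}$ and $\{r,t\}$ are Jacobson pairs, since $u = 1 - a(a^- - waa^*)$, $s = 1 - (a^- - waa^*)a$ and $r = 1 - (a^- - a^*aw)a$, $t = 1 - a(a^- - a^*aw)$; thus Lemma \ref{jacobson} gives (iv) $\Leftrightarrow$ (vi) and (v) $\Leftrightarrow$ (vii), and in particular (vi), (vii) also force $a \in R^\dag$. With $a \in R^\dag$ now free in every case, each unit condition coincides with the corresponding condition of Corollary \ref{referee suggest}, closing the equivalence with (ii).

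For the explicit inverses I would verify the stated candidates against the defining identities. The relation $ta = aa^*awa$ gives $(t^{-1}aa^*)awa = t^{-1}(aa^*awa) = t^{-1}ta = a$, the first axiom of the $w$-core inverse; symmetrically $as = awaa^*a$ gives $awa\,(a^*as^{-1}) = a$, the first axiom of the dual $w$-core inverse. The two remaining axioms in each case (the Hermitian condition and $awx^2 = x$, respectively $y^2wa = y$) then follow by a direct computation using $a \in R^\dag$, or by comparison with the representation $a_w^{\tiny\textcircled{\tiny{\#}}} = w^{\parallel a}a^\dag$ of Theorem \ref{relate to mary inverse}.

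The step I expect to be the main obstacle is the redundancy argument of the second paragraph. One is tempted to imitate the proof of Theorem \ref{vw intersect} and factor $u = (awaa^- + 1 - aa^-)(aa^* + 1 - aa^-)$, whose two factors are invertible precisely when $w \in R^{\parallel a}$ and when $a \in R^\dag$; but invertibility of a product does not return invertibility of its factors, so this alone does not let one read off $a \in R^\dag$ from $u \in R^{-1}$. The decisive move is instead the one-sided multiplication that places $a$ in $Raa^*a$ (or $aa^*aR$) directly, after which Lemma \ref{MP inverse ideal char} applies and the factorisation can be split; this is exactly what makes the hypothesis $v \in R^{\parallel a}$ of Corollary \ref{referee suggest} dispensable in the present statement.
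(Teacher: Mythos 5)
The paper gives no proof of this theorem (it is explicitly ``left to the reader''), but the remark immediately preceding it shows the intended route is exactly yours: specialize Corollary \ref{referee suggest} to $v=a^*$. Your execution is correct, including the one genuinely nontrivial point --- that each unit condition alone forces $a\in Raa^*a$ or $a\in aa^*aR$ and hence $a\in R^\dag$ via Lemma \ref{MP inverse ideal char}, making the clause $v=a^*\in R^{\parallel a}$ redundant --- and your Jacobson pairings $\{u,s\}$, $\{r,t\}$ and the verification of the formulas $t^{-1}aa^*=w^{\parallel a}a^\dag$, $a^*as^{-1}=a^\dag w^{\parallel a}$ all check out.
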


\begin{corollary} {\rm \cite[Theorem 5.6]{Chen2017}} Let $a\in R$ be regular with $a^-\in a\{1\}$. Then the following conditions are equivalent{\rm:}

\emph{(i)} $a\in R^{\tiny\textcircled{\tiny{\#}}} \cap R_{\tiny\textcircled{\tiny{\#}}}$.

\emph{(ii)} $a\in R^\# \cap R^{\dag}$.

\emph{(iii)} $u=a^2a^* +1-aa^{-}\in R^{-1}$.

\emph{(iv)} $v=a^* a^2 +1-a^{-}a\in R^{-1}$.

\emph{(v)} $s=aa^* a +1-a^{-}a\in R^{-1}$.

\emph{(vi)} $t=aa^* a +1-aa^{-}\in R^{-1}$.

In this case, $a^{\tiny\textcircled{\tiny{\#}}}=t^{-1}aa^*$ and $a_{\tiny\textcircled{\tiny{\#}}}=a^* as^{-1}$.
\end{corollary}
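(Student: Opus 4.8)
The plan is to obtain this corollary as the special case $w=1$ of the immediately preceding Theorem \ref{intersect}, rather than to reprove anything from scratch. First I would recall the observation made just after Theorem \ref{uniqueness} that the $1$-core inverse coincides with the classical core inverse; dually, the dual $1$-core inverse is the classical dual-core inverse. Consequently $R_1^{\tiny\textcircled{\tiny{\#}}}=R^{\tiny\textcircled{\tiny{\#}}}$ and $R_{1,\tiny\textcircled{\tiny{\#}}}=R_{\tiny\textcircled{\tiny{\#}}}$, so that condition (i) of Theorem \ref{intersect} with $w=1$ is exactly condition (i) of the present corollary.

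Next I would translate condition (ii) of Theorem \ref{intersect}. With $w=1$ it reads ``$1\in R^{\parallel a}$ and $a\in R^{\dag}$''. By \cite[Corollary 3.4]{Mary2013}, quoted in the Introduction, $1^{\parallel a}$ exists if and only if $a\in R^{\#}$; hence condition (ii) becomes $a\in R^{\#}\cap R^{\dag}$, which is exactly condition (ii) here. This is the only place where a genuine (if routine) identification is needed, and I would flag it as the point deserving care: one must invoke $1\in R^{\parallel a}\Leftrightarrow a\in R^{\#}$ rather than attempt to re-derive the unit criteria directly.

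The remaining conditions follow by direct substitution $w=1$ into the four units of Theorem \ref{intersect}: $u=awaa^{*}+1-aa^{-}$ becomes $a^{2}a^{*}+1-aa^{-}$; $r=a^{*}awa+1-a^{-}a$ becomes $a^{*}a^{2}+1-a^{-}a$; $s=waa^{*}a+1-a^{-}a$ becomes $aa^{*}a+1-a^{-}a$; and $t=aa^{*}aw+1-aa^{-}$ becomes $aa^{*}a+1-aa^{-}$. These are precisely conditions (iii)--(vi) of the corollary. I would note in passing that condition (iii) of Theorem \ref{intersect}, namely $a\in R_1^{\tiny\textcircled{\tiny{\#}}}\cap R_{a^{*},\tiny\textcircled{\tiny{\#}}}$, is simply an additional equivalent statement that the corollary does not record, and may be dropped without affecting the equivalence.

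Finally, the closed forms supplied by Theorem \ref{intersect}, namely $a_w^{\tiny\textcircled{\tiny{\#}}}=t^{-1}aa^{*}$ and $a_{w,\tiny\textcircled{\tiny{\#}}}=a^{*}as^{-1}$, specialize at $w=1$ to $a^{\tiny\textcircled{\tiny{\#}}}=t^{-1}aa^{*}$ and $a_{\tiny\textcircled{\tiny{\#}}}=a^{*}as^{-1}$, as claimed. There is essentially no obstacle in this argument beyond the bookkeeping of the substitution; the whole proof reduces to citing Theorem \ref{intersect} together with the two dictionary facts above.
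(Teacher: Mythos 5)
Your proposal is correct and is precisely the argument the paper intends: the corollary is the case $w=1$ of Theorem \ref{intersect}, using the identifications $R_1^{\tiny\textcircled{\tiny{\#}}}=R^{\tiny\textcircled{\tiny{\#}}}$, $R_{1,\tiny\textcircled{\tiny{\#}}}=R_{\tiny\textcircled{\tiny{\#}}}$ and $1\in R^{\parallel a}\Leftrightarrow a\in R^{\#}$ from \cite[Corollary 3.4]{Mary2013}, exactly as you flag. The paper supplies no separate proof, so your substitution and relabelling of conditions (iv)--(vii) as (iii)--(vi) is all that is required.
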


\section{Applications to complex matrices}

In this section, we particularize $S$ to $M_n(\mathbb{C})$, the ring of all $n \times n$ complex matrices, then we can obtain the concise criteria for the $W$-core inverse of $A$, where $A,W\in M_n(\mathbb{C})$. It should be mentioned that the notion of the $W$-core inverse of $A$ was not considered so far in the context of complex matrices.

\begin{theorem} Let $A,W,X\in M_n(\mathbb{C})$. Then the following conditions are equivalent{\rm:}

\emph{(i)} $X$ is the $W$-core inverse of $A$.

\emph{(ii)} $AWX=P_A$ and $\mathcal{R}(X) = \mathcal{R}(A)$.

\emph{(iii)} $AWX=P_A$ and $\mathcal{R}(X) \subseteq \mathcal{R}(A)$.
\end{theorem}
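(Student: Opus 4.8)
The plan is to prove the cycle (i) $\Rightarrow$ (ii) $\Rightarrow$ (iii) $\Rightarrow$ (i), reducing everything to the semigroup-theoretic characterization already obtained in Theorem \ref{characteristic ew}. The first task is to set up a dictionary between the geometric language of column spaces and the ideal language of $S = M_n(\mathbb{C})$. Since every matrix $M$ with $\mathcal{R}(M) \subseteq \mathcal{R}(A)$ factors as $M = AB$ and conversely, one has $\mathcal{R}(X) \subseteq \mathcal{R}(A) \Leftrightarrow XS \subseteq AS$ and $\mathcal{R}(X) = \mathcal{R}(A) \Leftrightarrow XS = AS$; passing to adjoints gives $SX = SA^* \Leftrightarrow \mathcal{R}(X^*) = \mathcal{R}(A)$. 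I will also use the standard properties of the orthogonal projector: $P_A^* = P_A = P_A^2$, $\mathcal{R}(P_A) = \mathcal{R}(A)$, $P_A A = A$, $P_A M = M$ whenever $\mathcal{R}(M) \subseteq \mathcal{R}(A)$, and the fact that any Hermitian idempotent whose range equals $\mathcal{R}(A)$ must coincide with $P_A$.

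For (i) $\Rightarrow$ (ii), I would start from Lemma \ref{added lemma}, which tells me that $WX \in A\{1,2,3\}$; in particular $AWXA = A$ and $(AWX)^* = AWX$. Then $AWX$ is Hermitian, idempotent (since $(AWX)(AWX) = (AWXA)WX = AWX$), and its range satisfies $\mathcal{R}(AWX) \subseteq \mathcal{R}(A) = \mathcal{R}(AWXA) \subseteq \mathcal{R}(AWX)$; hence $AWX = P_A$. The range equality is immediate from the defining equations: $X = AWX^2$ gives $\mathcal{R}(X) \subseteq \mathcal{R}(A)$, while $A = XAWA$ gives $\mathcal{R}(A) \subseteq \mathcal{R}(X)$, so $\mathcal{R}(X) = \mathcal{R}(A)$. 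The implication (ii) $\Rightarrow$ (iii) is trivial.

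The decisive step is (iii) $\Rightarrow$ (i), where only the containment $\mathcal{R}(X) \subseteq \mathcal{R}(A)$ is available. The key observation is a rank count: from $AWX = P_A$ we get $\mathrm{rank}(A) = \mathrm{rank}(P_A) = \mathrm{rank}(AWX) \le \mathrm{rank}(X) \le \mathrm{rank}(A)$, the last inequality being the containment; hence $\mathrm{rank}(X) = \mathrm{rank}(A)$ and therefore $\mathcal{R}(X) = \mathcal{R}(A)$. This upgrade of containment to equality is exactly why (iii) is no weaker than (ii), and I expect it to be the main point to state cleanly. With $\mathcal{R}(X) = \mathcal{R}(A)$ in hand I would verify the three hypotheses of Theorem \ref{characteristic ew}(iii): $AWXA = P_A A = A$; $XS = AS$ from $\mathcal{R}(X) = \mathcal{R}(A)$; and $SX = SA^*$ from $\mathcal{R}(X^*) = \mathcal{R}(A)$, the latter obtained by taking adjoints in $AWX = P_A$ to get $\mathcal{R}(A) = \mathcal{R}(P_A) \subseteq \mathcal{R}(X^*)$ and matching ranks since $\mathrm{rank}(X^*) = \mathrm{rank}(X) = \mathrm{rank}(A)$. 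Theorem \ref{characteristic ew} then identifies $X$ as the (unique, by Theorem \ref{uniqueness}) $W$-core inverse of $A$, completing the cycle.
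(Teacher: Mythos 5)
Your proposal is correct and follows essentially the same route as the paper: the decisive step in (iii) $\Rightarrow$ (i) is the same rank count upgrading $\mathcal{R}(X)\subseteq\mathcal{R}(A)$ to equality (the paper uses $\mathrm{rank}(A)=\mathrm{rank}(AWXA)\le\mathrm{rank}(X)$, you use $\mathrm{rank}(AWX)\le\mathrm{rank}(X)$), and the remaining verifications differ only cosmetically --- you identify $AWX=P_A$ as a Hermitian idempotent with range $\mathcal{R}(A)$ and then invoke Theorem \ref{characteristic ew}(iii), whereas the paper computes $AWX=AWW^{\parallel A}A^\dag=AA^\dag$ and checks the three defining equations directly.
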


\begin{proof} (i) $\Rightarrow$ (ii) As $X$ is the $W$-core inverse of $A$, then, by the implication (i) $\Rightarrow$ (iii) of Theorem \ref{characteristic ew}, we have $XM_n=AM_n$. So, $\mathcal{R}(X) = \mathcal{R}(A)$. It follows from Theorem \ref{relate to mary inverse} that $X=W^{
\parallel A}A^\dag$ and hence $AWX=AWW^{
\parallel A}A^\dag=AA^\dag=P_A$.

(ii) $\Rightarrow$ (iii) is a tautology.

(iii) $\Rightarrow$ (i) Given $AWX=P_A$, then $AWX=(AWX)^*$. There exists some $Y\in M_n(\mathbb{C})$ such that $X=AY$ since $\mathcal{R}(X) \subseteq \mathcal{R}(A)$. Also, it follows from $AWX=P_A$ that $AWXA=P_AA=A$ and and hence $AWX^2=AWXAY=AY=X$. Note that $A^*=(AWXA)^*=A^*(AWX)^*=A^*AWX$ and $\mathcal{R}(X) \subseteq \mathcal{R}(A)$ (i.e., $\mathcal{C}(A^*)$, where $\mathcal{C}(A^*)$ denotes the row space of $A^*$). Then there is some $Z\in M_n(\mathbb{C})$ such that $X=ZA^*=ZA^*AWX=XAWX$.  Observe that ${\rm rank}(A)={\rm rank}(AWXA)\leq {\rm rank}(X)$. Then $\mathcal{R}(X) = \mathcal{R}(A)$ and $A=XN$ for some $N\in M_n(\mathbb{C})$. We hence have $A=XN=XAWXN=XAWA$, as required.
\hfill$\Box$
\end{proof}

As a consequence of Theorems \ref{relate to mary inverse} and \ref{wv mary}, we have the following result.

\begin{corollary} \label{complex W} Let $A,W\in M_n(\mathbb{C})$. Then

\emph{(i)} $A$ is $W$-core invertible if and only if $W$ is invertible along $A$ if and only if $AW$ is invertible along $AA^*$. In this case, $A_{W}^{\tiny\textcircled{\tiny{\#}}}=W^{\parallel A}A^\dag=(AW)^{\parallel AA^*}$.

\emph{(ii)} If $W^{\parallel A}$ exists and $\lambda \in \mathbb{C}/ \{0\}$, then $(\lambda A)_{W}^{\tiny\textcircled{\tiny{\#}}}=\frac{1}{\lambda}A_{W}^{\tiny\textcircled{\tiny{\#}}}$.
\end{corollary}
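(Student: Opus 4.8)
The plan is to deduce both parts as immediate specializations of Theorems \ref{relate to mary inverse} and \ref{wv mary}, using the decisive feature of $M_n(\mathbb{C})$ under the conjugate transpose: \emph{every} matrix is Moore--Penrose invertible, hence in particular $\{1,3\}$-invertible and an element of $S^\dag$. This makes the auxiliary hypotheses of the two cited theorems automatic, collapsing their two-part criteria into a single condition.

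For part (i), I would argue as follows. Since $A^{(1,3)}$ always exists in $M_n(\mathbb{C})$, Theorem \ref{relate to mary inverse} says $A$ is $W$-core invertible precisely when $W^{\parallel A}$ exists, i.e.\ when $W$ is invertible along $A$; this gives the first equivalence, together with the representation $A_W^{\tiny\textcircled{\tiny{\#}}}=W^{\parallel A}A^{(1,3)}=W^{\parallel A}A^\dag$, where I choose the Moore--Penrose inverse $A^\dag$ as the particular $\{1,3\}$-inverse (the product being independent of the chosen $\{1,3\}$-inverse, as noted after Theorem \ref{relate to mary inverse}). For the second equivalence I would invoke Theorem \ref{wv mary}(i): because $A\in M_n(\mathbb{C})=S^\dag$ holds unconditionally, that theorem applies verbatim and yields that $A$ is $W$-core invertible if and only if $AW$ is invertible along $AA^*$, with $A_W^{\tiny\textcircled{\tiny{\#}}}=(AW)^{\parallel AA^*}$.

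For part (ii), I would first show $W^{\parallel \lambda A}=W^{\parallel A}$ for $\lambda\neq 0$. Writing $c=W^{\parallel A}$, so that $cWA=A=AWc$ and $c\leq_\mathcal{H}A$, I would pull the scalar out to get $cW(\lambda A)=\lambda A=(\lambda A)Wc$, and observe that $S^1(\lambda A)=S^1A$ and $(\lambda A)S^1=AS^1$ since $\lambda$ is invertible, so $c\leq_\mathcal{H}A$ forces $c\leq_\mathcal{H}\lambda A$; hence $c$ is the inverse of $W$ along $\lambda A$. In particular $W^{\parallel\lambda A}$ exists, so by part (i) $\lambda A$ is $W$-core invertible. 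Next I would record the elementary identity $(\lambda A)^\dag=\tfrac{1}{\lambda}A^\dag$, verified by substituting $X=\tfrac{1}{\lambda}A^\dag$ into the four Penrose equations: the scalars $\lambda$ and $\tfrac1\lambda$ cancel directly, leaving $(\lambda A)X=AA^\dag$ and $X(\lambda A)=A^\dag A$, both Hermitian, and $(\lambda A)X(\lambda A)=\lambda A$, $X(\lambda A)X=X$. Combining these with the formula of part (i) gives $(\lambda A)_W^{\tiny\textcircled{\tiny{\#}}}=W^{\parallel\lambda A}(\lambda A)^\dag=W^{\parallel A}\cdot\tfrac1\lambda A^\dag=\tfrac1\lambda A_W^{\tiny\textcircled{\tiny{\#}}}$.

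I do not expect a genuine obstacle here: the entire content is the recognition that over $M_n(\mathbb{C})$ the side conditions ``$A^{(1,3)}$ exists'' and ``$A\in S^\dag$'' are free, after which (i) is a direct transcription of the two theorems. The only care needed in (ii) is the scalar bookkeeping in $(\lambda A)^\dag=\tfrac1\lambda A^\dag$ and the remark that the $\mathcal{H}$-preorder is unaffected by a nonzero scalar multiple.
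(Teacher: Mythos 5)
Your proposal is correct and follows exactly the route the paper intends: the corollary is stated there without proof as an immediate consequence of Theorems \ref{relate to mary inverse} and \ref{wv mary}, the whole point being that over $M_n(\mathbb{C})$ the hypotheses ``$A^{(1,3)}$ exists'' and ``$A\in S^\dag$'' are automatic. Your additional details for (ii) --- the invariance $W^{\parallel \lambda A}=W^{\parallel A}$ under nonzero scalars and the identity $(\lambda A)^\dag=\tfrac{1}{\lambda}A^\dag$ --- are the right (and correctly verified) bookkeeping.
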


In terms of Lemma \ref{Mary left and right} and Corollary \ref{complex W}, the existence criterion for the $W$-core inverse of $A$ is given below by the rank in complex matrices.

\begin{theorem} Let $A,W\in M_n(\mathbb{C})$. Then $A$ is $W$-core invertible if and only if ${\rm rank} (A)={\rm rank} (AWA)$. In this case, $A_{W}^{\tiny\textcircled{\tiny{\#}}}=A(AWA)^\dag AA^\dag$.
\end{theorem}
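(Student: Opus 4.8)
The plan is to reduce the statement to the existence criterion for the inverse of $W$ along $A$, which is already packaged in Corollary \ref{complex W}, and then to translate the Green $\mathcal{H}$-preorder condition of Lemma \ref{Mary left and right} into the stated rank equality. By Corollary \ref{complex W}(i), $A$ is $W$-core invertible if and only if $W^{\parallel A}$ exists (here a $\{1,3\}$-inverse of $A$, namely $A^\dag$, always exists in $M_n(\mathbb{C})$, so the $\{1,3\}$-condition of Theorem \ref{relate to mary inverse} is automatic), and in that case $A_W^{\tiny\textcircled{\tiny{\#}}}=W^{\parallel A}A^\dag$. Applying Lemma \ref{Mary left and right} with $d=A$ and $a=W$, the inverse $W^{\parallel A}$ exists if and only if $A\leq_\mathcal{H}AWA$, that is, $A\leq_\mathcal{L}AWA$ and $A\leq_\mathcal{R}AWA$ simultaneously.

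Next I would identify $A\leq_\mathcal{H}AWA$ with the rank equality. Writing $AWA=A(WA)=(AW)A$ gives $\mathcal{R}(AWA)\subseteq\mathcal{R}(A)$, and applying the same observation to $(AWA)^*=A^*W^*A^*$ gives $\mathcal{R}((AWA)^*)\subseteq\mathcal{R}(A^*)$; hence ${\rm rank}(AWA)\leq{\rm rank}(A)$ unconditionally. If ${\rm rank}(A)={\rm rank}(AWA)$, then both inclusions become equalities $\mathcal{R}(AWA)=\mathcal{R}(A)$ and $\mathcal{R}((AWA)^*)=\mathcal{R}(A^*)$, which are precisely $A\leq_\mathcal{R}AWA$ and $A\leq_\mathcal{L}AWA$, so $A\leq_\mathcal{H}AWA$. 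Conversely, $A\leq_\mathcal{R}AWA$ yields $\mathcal{R}(A)\subseteq\mathcal{R}(AWA)$, whence ${\rm rank}(A)\leq{\rm rank}(AWA)$, forcing equality. This settles the equivalence in the theorem.

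For the closed formula I would use the explicit part of Lemma \ref{Mary left and right}: it suffices to produce an $X$ with $A=AWAX$, since then $W^{\parallel A}=AX$. Set $M=AWA$ and take $X=M^\dag A=(AWA)^\dag A$. From ${\rm rank}(A)={\rm rank}(M)$ and $\mathcal{R}(M)=\mathcal{R}(A)$ one obtains $MM^\dag=AA^\dag$, because both are the orthogonal projector onto the common column space $\mathcal{R}(A)$. Therefore $AWAX=MM^\dag A=AA^\dag A=A$, as required, and consequently $W^{\parallel A}=AX=A(AWA)^\dag A$. Substituting into Corollary \ref{complex W} gives $A_W^{\tiny\textcircled{\tiny{\#}}}=W^{\parallel A}A^\dag=A(AWA)^\dag AA^\dag$, matching the claim.

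I expect the main obstacle to be the passage from the rank equality to the equalities of column and row spaces, and in particular the justification that $MM^\dag=AA^\dag$: one must invoke that $MM^\dag$ is exactly the orthogonal projector onto $\mathcal{R}(M)$ and combine this with $\mathcal{R}(M)=\mathcal{R}(A)$ to identify it with $AA^\dag$. Once this is in hand, the verification of $AWAX=A$ and the final substitution are routine applications of the Moore-Penrose identities $AA^\dag A=A$ and $MM^\dag M=M$.
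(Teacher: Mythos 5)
Your proposal is correct and follows exactly the route the paper intends: the theorem is stated as a consequence of Lemma \ref{Mary left and right} and Corollary \ref{complex W}, and you fill in precisely those steps (translating $A\leq_{\mathcal{H}}AWA$ into the rank equality via the automatic inclusions $\mathcal{R}(AWA)\subseteq\mathcal{R}(A)$ and $\mathcal{R}((AWA)^*)\subseteq\mathcal{R}(A^*)$, and extracting $W^{\parallel A}=A(AWA)^\dag A$ from the identity $AWA(AWA)^\dag=AA^\dag$). The key justification you flag, that $MM^\dag$ and $AA^\dag$ coincide as the unique orthogonal projector onto the common column space, is exactly what is needed and is sound.
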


\bigskip
\centerline {\bf ACKNOWLEDGMENTS}
\vskip 2mm
The authors are grateful to the referee for his/her careful reading and valuable comments which led to the improvement in this paper. It was notably suggested by the referee to divide the article into three parts, each one discussing a different algebraic structure. Some other results, such as Proposition \ref{core another 1} (ii) $\Leftrightarrow$ (iii) and the formulae, Remark \ref{referee add} were given by the referee. Also, the proof of Theorem \ref{idempotent} is simplified by the referee. This research is supported by the National Natural Science Foundation of China (No. 11801124, No. 11771076) and China Postdoctoral Science Foundation (No. 2020M671068).

\begin{flushleft}
{\bf References}
\end{flushleft}

\end{document}